\numberwithin{equation}{section}
\newtheorem{theorem}{Theorem}[section]
\newtheorem{proposition}[theorem]{Proposition}
\newtheorem{corollary}[theorem]{Corollary}
\newtheorem{lemma}[theorem]{Lemma}
\theoremstyle{definition}
\newtheorem{definition}[theorem]{Definition}
\newtheorem{example}[theorem]{Example}
\newtheorem{remark}[theorem]{Remark}
\newcommand{\Res}{\mathrm{Res}}
\newcommand{\vol}{\operatorname{vol}}
\newcommand{\CT}{\operatorname{CT}}
\newcommand{\indeg}{\mathrm{indeg}}
\newcommand{\aff}{\mathrm{aff}}
\newcommand{\outdeg}{\mathrm{outdeg}}
\title[Refinements and Symmetries of the Morris identity for volumes of flow polytopes]{Refinements and Symmetries of the Morris \\identity for volumes of flow polytopes}
\author[Alejandro H. Morales, William Shi]{Alejandro H. Morales$^\star$, \ and \ \ William Shi$^\dagger$}
\thanks{\today}
\thanks{$^\star$Department of Mathematics and Statistics, UMass, Amherst, MA~01003.\quad
Email: \href{mailto:ahmorales@math.umass.edu}{ahmorales@math.umass.edu}}
\thanks{$^\dagger$Northview High School, Johns Creek, GA~30097.\quad Email: \href{mailto:williamshi080@gmail.com}{williamshi080@gmail.com}}
\begin{document}

\maketitle 

\begin{center}
\dedicatory{\em Dedicated to Doron Zeilberger in recognition of his 70th birthday.}
\end{center}

\begin{abstract}
  Flow polytopes are an important class of polytopes in combinatorics whose lattice points and volumes have interesting properties and relations. The Chan--Robbins--Yuen (CRY) polytope is a flow polytope with normalized volume equal to the product of consecutive Catalan numbers. Zeilberger proved this by evaluating the Morris constant term identity, but no combinatorial proof is known. There is a refinement of this formula that splits the largest Catalan number into Narayana numbers, which M\'esz\'aros gave an interpretation as the volume of a collection of flow polytopes. We introduce a new refinement of the Morris identity with combinatorial interpretations both in terms of lattice points and volumes of flow polytopes. Our results generalize M\'esz\'aros's construction and a recent flow polytope interpretation of the Morris identity by Corteel--Kim--M\'esz\'aros. We prove the product formula of our refinement following the strategy of the Baldoni--Vergne proof of the Morris identity. Lastly, we study a symmetry of the Morris identity bijectively using the Danilov--Karzanov--Koshevoy triangulation of flow polytopes and a bijection of M\'esz\'aros--Morales--Striker.
\end{abstract}

\section{Introduction} \label{sec: intro}

\subsection{Foreword}
Flow polytopes play a fundamental role in combinatorial optimization through their relation to maximum matching and minimum cost problems (e.g. see \cite[Ch. 13]{schrijver2003combinatorial}). Flow polytopes have been used in various fields like toric geometry \cite{hille} and representation theory \cite{BVkpf}. More recently, they have been related to geometric and algebraic combinatorics thanks to connections with   Schubert polynomials \cite{escobar2018subword}, diagonal harmonics \cite{tesler}, Gelfand-Tsetlin polytopes \cite{flowsGT}, and generalized permutahedra \cite{permutahedra}.

Given a graph $G$ with vertex set $\{0, 1, \ldots, n, n+1\}$ and edges $(i,j)$ oriented $i \to j$ if $i < j,$ we associate with $G$ a net flow vector $\mathbf{a} = (a_0,a_1,\dots, a_{n}, -\sum_{i=0}^{n} a_i)$ such that vertex $i$ has net flow $a_i$ for $i = 0, 1, \ldots, n.$ The set of all flows with net flow vector $\mathbf{a}$, called the \textit{flow polytope}, is denoted by $\mathcal F_G(\mathbf{a}).$ Define $K_G(\mathbf{a})$ as the number of lattice points (integer flows) of $\mathcal F_G(\mathbf{a}),$ called the \emph{Kostant vector partition function}. The name comes from the fact that for the complete graph $k_{n+2}$, $K_{k_{n+2}}({\bf a})$ is a vector partition function studied by Kostant in the context of Lie algebras (e.g. \cite{Hum}). The following theorem, which appears in unpublished work of Postnikov and Stanley and in the work of Baldoni-Vergne \cite{BVkpf}, relates the volume of a flow polytope to a Kostant partition function. See Section~\ref{sec:vol proof} for a new  recursive proof of this result.  

\begin{theorem}[Postnikov-Stanley, Baldoni-Vergne \cite{BVkpf}] \label{thm:vol-kpf}
For a loopless digraph $G$ with vertices $\{0, 1, \ldots, n+1\}$ having unique source $0$ and unique sink $n + 1$,
\begin{equation} \label{eq:vol is kpf}
\vol \mathcal F_G(1,0,\ldots,0,-1) = K_G (0,d_1, \dots, d_{n}, -\sum_{i=1}^{n} d_i),
\end{equation}
where $d_i = \indeg_G(i) - 1.$
\end{theorem}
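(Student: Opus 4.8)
The plan is to prove the identity
\[
\vol \mathcal F_G(1,0,\ldots,0,-1) = K_G(0,d_1,\dots,d_n,-\textstyle\sum_{i=1}^n d_i)
\]
by a recursion on edges, reducing a general loopless digraph $G$ to simpler graphs whose flow polytopes and Kostant partition functions are understood, and then checking that both sides of the identity obey the \emph{same} recursion with the \emph{same} base case. This mirrors the reduction rules that underlie the Postnikov--Stanley and Baldoni--Vergne theory, so the two sides are made to satisfy a common inductive scheme.

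First I would set up the base case. When $G$ has no multiple edges (or more precisely when the flow polytope is a single point / a simplex of dimension zero), the flow polytope $\mathcal F_G(1,0,\ldots,0,-1)$ has a trivial volume and the net flow vector on the right collapses so that $K_G$ counts a single lattice point; one checks these agree, including the degenerate cases where a vertex has indegree $1$ so that $d_i=0$. Next I would introduce the main recursive step, a \emph{subdivision} (or reduction) move on a chosen edge. The standard device is the Postnikov--Stanley/Baldoni--Vergne reduction that replaces a pair of edges through a vertex by combining them, expressing $\mathcal F_G$ as a union of lower-dimensional or fewer-edge pieces whose volumes add; on the Kostant side the same move corresponds to a Pascal-type recurrence on the partition function $K_G$ coming from splitting off the flow on a single edge.

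The heart of the argument is to verify that these two recursions coincide term-by-term. For the volume side I would use the geometric fact that cutting along a facet decomposes the polytope into pieces corresponding to graphs obtained by edge operations, each contributing the volume of a flow polytope of a smaller graph; for the Kostant side I would use the defining recurrence of the vector partition function, namely $K_G(\mathbf a) = \sum K_{G'}(\mathbf a - \mathbf e)$ over ways to remove a unit of flow along an edge. I would track how the net flow vector $(0,d_1,\dots,d_n,-\sum d_i)$ transforms under the edge operation, using $d_i=\indeg_G(i)-1$, so that removing an edge incident to vertex $i$ decreases $\indeg_G(i)$ by one and hence decreases $d_i$ by one, matching the shift $\mathbf a - \mathbf e$ on the partition-function side exactly.

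The main obstacle, and the step I expect to require the most care, is bookkeeping the correspondence between geometric pieces and graph operations so that the volume recursion and the Kostant recursion are literally identical, not merely structurally similar. In particular, I must ensure that the dimension count (the volume of $\mathcal F_G(1,0,\ldots,0,-1)$ normalized appropriately equals a number of lattice points, i.e.\ a top-degree coefficient of the Ehrhart polynomial) is respected at each reduction, and that the indegree shifts $d_i \mapsto d_i - 1$ align with the edge being reduced rather than an adjacent one. Handling multiple edges and the boundary behavior at the source $0$ and sink $n+1$ (which must remain the unique source and sink throughout the recursion) is where the argument is most delicate; once that correspondence is pinned down, induction on the number of edges closes the proof.
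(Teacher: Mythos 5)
Your overall strategy --- induct via the Postnikov--Stanley reduction rule, check that volumes and Kostant partition functions satisfy the same recursion, and anchor both at a common base case --- is exactly the strategy of the paper's proof in Section~\ref{sec:vol proof}, but two of your ingredients have genuine gaps. First, the base case is misidentified. The reduction process does not terminate at graphs whose flow polytope is a point, and ``no multiple edges'' is not a base case at all: $k_{n+2}$ is simple, yet $\mathcal F_{k_{n+2}}$ is the CRY polytope, whose volume is the whole point of the subject. The paper's algorithm (Lemma~\ref{base case algorithm}) stops at graphs in which every internal vertex has outdegree $1$; the flow polytope of such a graph is a unimodular simplex of dimension $m-n-1$, positive in general, and one must separately prove (Lemma~\ref{base case}) that both the normalized volume and $K_G(0,d_1,\ldots,d_n,-\sum_i d_i)$ equal $1$ there. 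Note also that your proposed ``induction on the number of edges'' cannot close the argument: the reduction rule \eqref{reduction1}--\eqref{reduction2} deletes one edge and adds one, so the edge count is invariant; well-foundedness instead comes from termination of the specific reduction algorithm.

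Second, and more seriously, the inductive step on the Kostant side is not supplied by the recurrence you invoke. You propose to match the geometric subdivision against the standard partition-function recurrence (``remove a unit of flow along an edge,'' shifting $\mathbf a \mapsto \mathbf a - \mathbf e$). But the identity actually needed is $K_{G_0}(\mathbf d) = K_{G_1}(\mathbf{d_1}) + K_{G_2}(\mathbf{d_2})$, where each net flow vector is recomputed from the indegrees of its own graph: in $G_2$, which deletes $(i,j)$ and adds $(i,k)$, one has $d_j'' = d_j - 1$ \emph{and} $d_k'' = d_k + 1$, so the net flow changes at two vertices simultaneously while the number of edges stays the same --- this is not an instance of the edge-removal recurrence. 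The paper proves the identity by an explicit bijection (Lemma~\ref{inductive step}): writing $x = f(i,j)$ and $y = f(j,k)$, flows with $y \le x$ are sent to $G_1$ via $f'(i,j) = x - y$, $f'(i,k) = y$, while flows with $y > x$ are sent to $G_2$ via $f'(j,k) = y - x - 1$, $f'(i,k) = x$. The dichotomy $y \le x$ versus $y > x$ and the asymmetric $-1$ offset are precisely what make the lattice-point count with the shifted vector $\mathbf{d_2}$ come out right, and nothing in your proposal produces this mechanism. You correctly flag this bookkeeping as ``the main obstacle,'' but it is the entire content of the proof rather than a detail to be checked, so as written the proposal has a hole exactly where the theorem lives.
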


An important example of a flow polytope is the Chan-Robbins-Yuen (CRY) polytope \cite{CRY}, defined as $CRY_{n+1} := \mathcal F_{k_{n+2}}(1,0,\dots,0,-1).$ Zeilberger calculated the volume of $CRY_{n+1}$ algebraically using the Morris constant term identity, equivalent to the famous Selberg integral formula (see \cite{Forrester}). For convenience, we use the term \textit{volume} in this paper to refer to normalized volume.

\begin{theorem}[Zeilberger's Morris Identity \cite{Z}] \label{morris}
For positive integers $n,a,$ and $b,$ and nonnegative integer $c,$ define the constant term
\begin{equation*}
    M_n(a,b,c) := \CT_x \prod_{i=1}^n (1-x_i)^{-b}x_i^{-a+1}\prod_{1 \leq i < j \leq n}(x_j - x_i)^{-c},
\end{equation*}
where $\CT_x := \CT_{x_n}\cdots \CT_{x_1}$. Then 
\begin{equation} \label{eq:morris}
    M_n(a,b,c) = \prod_{j=0}^{n-1} \frac{\Gamma(a-1 + b + (n-1+j)\frac{c}{2})\Gamma(\frac{c}{2}+1)}{\Gamma(a+j\frac{c}{2})\Gamma(b + j\frac{c}{2})\Gamma(\frac{c}{2}(j+1)+1)}.
\end{equation}
\end{theorem}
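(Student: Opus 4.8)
The plan is to prove the identity by induction on the number of variables $n$, following the residue-reduction strategy of Baldoni--Vergne. The first step is to make the constant term precise as an iterated residue: after expanding each factor in the prescribed region --- $(1-x_i)^{-b}$ as a power series in $x_i$ about $0$, and each cross term $(x_j-x_i)^{-c}$ with $i<j$ in the region where $x_i$ is the inner variable, so that $(x_j-x_i)^{-c}=x_j^{-c}\sum_{k\ge 0}\binom{c+k-1}{k}(x_i/x_j)^{k}$ --- one has $\CT_{x_i}=\Res_{x_i=0}\,x_i^{-1}$. Pinning down these expansion conventions is not a formality: the value of $M_n$ depends on them, and they dictate exactly which residues appear in the recursion.

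The base case $n=1$ is immediate, since there is no Vandermonde factor: $M_1(a,b,c)=\CT_{x_1}(1-x_1)^{-b}x_1^{-a+1}=[x_1^{a-1}](1-x_1)^{-b}=\binom{a+b-2}{a-1}=\Gamma(a+b-1)/(\Gamma(a)\Gamma(b))$, which is exactly the right-hand side of \eqref{eq:morris} at $n=1$. For the inductive step I would peel off one variable by computing its constant term, isolating the factor $x_1^{-a+1}(1-x_1)^{-b}\prod_{j=2}^{n}(x_j-x_1)^{-c}$ and extracting the coefficient of $x_1^{0}$, with the aim of a one-step recursion $M_n(a,b,c)=R_n(a,b,c)\,M_{n-1}(a',b',c)$ with explicit prefactor $R_n$ and shifted parameters $(a',b')$. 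I would first determine the correct factor by working backwards from the claimed product: writing $P_n(a,b,c)$ for the right-hand side of \eqref{eq:morris}, a short manipulation of Gamma functions shows that $P_n(a,b,c)/P_{n-1}(a+\tfrac c2,b,c)$ collapses to the clean ratio $\Gamma(a+b-1+(n-1)c)\,\Gamma(\tfrac c2+1)\big/\bigl(\Gamma(a)\,\Gamma(b+(n-1)\tfrac c2)\,\Gamma(\tfrac{nc}{2}+1)\bigr)$, pointing to a shift of the first parameter by $c/2$. Granting a single-variable reduction that realizes this factor, the induction closes at once.

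The main obstacle is precisely this single-variable reduction. In the base case $x_1$ is genuinely uncoupled, but in the inductive step $x_1$ is tied to every remaining variable through $\prod_{j\ge 2}(x_j-x_1)^{-c}$, so extracting the coefficient of $x_1^{0}$ naively yields a sum over compositions rather than a manifest shifted copy of $M_{n-1}$; moreover the target shift by $c/2$ is not literally a constant term when $c$ is odd, which signals that the recursion must be run over the analytic continuation in the parameters or, as in the Aomoto/Baldoni--Vergne approach, through an auxiliary family of constant terms carrying an extra polynomial insertion for which the recursion genuinely closes. Resumming this composition sum into the required Gamma ratio is the technical heart of the argument. Should the direct reduction prove unwieldy, I would fall back on the classical route: recognizing $M_n(a,b,c)$ as an integral over the torus $|x_1|=\cdots=|x_n|=1$ and relating it, via a standard contour-deformation argument, to the Selberg integral, whose product evaluation yields \eqref{eq:morris} after matching parameters --- essentially the derivation underlying Zeilberger's proof.
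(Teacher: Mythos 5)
Your base case is correct, and so is your backwards Gamma computation: the ratio $P_n(a,b,c)/P_{n-1}(a+\tfrac c2,b,c)$ does collapse to $\Gamma(a+b-1+(n-1)c)\Gamma(\tfrac c2+1)\big/\bigl(\Gamma(a)\Gamma(b+(n-1)\tfrac c2)\Gamma(\tfrac{nc}{2}+1)\bigr)$. But the proposal stops exactly where the proof has to begin. The one-step recursion $M_n(a,b,c)=R_n\cdot M_{n-1}(a+\tfrac c2,b,c)$ cannot even be formulated inside the family you are inducting on: when $c$ is odd, $a+\tfrac c2$ is not an integer, so $M_{n-1}(a+\tfrac c2,b,c)$ is not a constant term of any Laurent series, and appealing to ``analytic continuation in the parameters'' presupposes knowledge of the analytic (Gamma-product) form of $M_{n-1}$ --- which is the statement being proved. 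You flag both this and the fact that naive extraction of $[x_1^0]$ yields an unresummed sum over compositions, and you defer the resolution to ``an auxiliary family of constant terms for which the recursion genuinely closes.'' That deferred step is not a technicality; it is the entire content of the theorem. Likewise, the fallback through the torus integral and the Selberg integral is a citation of the known equivalence (this is Zeilberger's own route via Morris's thesis), not a self-contained argument. So as written there is a genuine gap: no inductive step is ever established.

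For comparison, the paper closes this gap by never shifting a parameter by $\tfrac c2$. It introduces the auxiliary constant term $\Psi_n(k,a,b,c)$ of \eqref{eq: definition Psi}, with the insertion $[t^k]\prod_i\bigl(1+t\frac{x_i}{1-x_i}\bigr)$, and proves in Lemma~\ref{psi recurrences} four relations in which every parameter shift is integral: $\Psi_n(n,a,b,c)=\Psi_n(0,a-1,b+1,c)$; the contraction $\Psi_n(n-1,1,b,c)=\Psi_{n-1}(0,c,b+1,c)$ (Lemma~\ref{psi contraction}), which performs the reduction $n\to n-1$ by shifting $a$ from $1$ to $c+1$ rather than by $\tfrac c2$; the trivial evaluation \eqref{psi rel 3}; and the $k$-recurrence \eqref{psi rel 4}, proved algebraically by antisymmetrizing a total derivative and using $\Res_{x_i}\partial_{x_i}=0$. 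Half-integers $\tfrac c2$ appear only as coefficients in \eqref{psi rel 4}, never in a parameter slot, which is what makes the recursion close within a family of honest constant terms. Lemma~\ref{psi uniquely defined} shows these relations determine $\Psi_n$ uniquely, the proof of Theorem~\ref{thm:psi product} checks that the Gamma product satisfies them, and setting $k=0$ (Proposition~\ref{phi m}) yields Theorem~\ref{morris}. If you want to rescue your outline, this family (or Baldoni--Vergne's original $\Phi'_n$ of Section~\ref{sec: phi}) is the auxiliary object you would need to construct and run the induction through; the single-variable peeling with a $\tfrac c2$ shift will not close on its own.
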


By specializing this identity, Zeilberger proved that the volume of $CRY_{n+1}$ is the product of the first $n - 1$ Catalan numbers. 

\begin{theorem}[Zeilberger \cite{Z}] \label{thm:CRY}
The volume of the polytope $CRY_{n+1}$ is given by $M_{n}(1,1,1) = \prod_{i=1}^{n-1} C_i,$ where $C_i = \frac{1}{i+1}\binom{2i}{i}$ is the $i$th Catalan number.
\end{theorem}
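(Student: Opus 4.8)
The plan is to prove the two equalities in the statement separately: I will derive $\vol CRY_{n+1} = M_n(1,1,1)$ from Theorem~\ref{thm:vol-kpf} together with the constant-term description of the Kostant partition function, and then obtain $M_n(1,1,1)=\prod_{i=1}^{n-1}C_i$ by specializing the product formula~\eqref{eq:morris}. First I would apply Theorem~\ref{thm:vol-kpf} to $G=k_{n+2}$. Since $\indeg_{k_{n+2}}(i)=i$, we have $d_i=i-1$, so
\[
\vol CRY_{n+1}=K_{k_{n+2}}\bigl(0,0,1,2,\dots,n-1,-\tbinom n2\bigr).
\]
I would then write this Kostant partition function as an iterated constant term using the standard generating function for type-$A$ vector partitions, assigning the monomial $x_i/x_j$ to the edge $i\to j$:
\[
K_{k_{n+2}}(\mathbf a)=\CT_{x_0,\dots,x_{n+1}}\Bigl[\,\prod_{i} x_i^{-a_i}\prod_{0\le i<j\le n+1}(1-x_i/x_j)^{-1}\Bigr].
\]

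The key to matching the Morris form is the elementary rewriting
\[
\prod_{0\le i<j\le n+1}\frac{1}{1-x_i/x_j}=\frac{\prod_{j} x_j^{\,j}}{\prod_{0\le i<j\le n+1}(x_j-x_i)},
\]
which turns the integrand into $\prod_j x_j^{\,j-a_j}\big/\prod_{i<j}(x_j-x_i)$, where $j-a_j$ equals $0$ at $j=0$, equals $1$ for $1\le j\le n$, and equals $n+1+\binom n2$ at $j=n+1$. Because the integrand is homogeneous of degree $0$, I may set the sink variable $x_{n+1}=1$; the factors $(x_{n+1}-x_i)$ then become $(1-x_i)$ for $0\le i\le n$. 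Finally, extracting the constant term in the source variable $x_0$ (expanding $(x_j-x_0)^{-1}$ and $(1-x_0)^{-1}$ in powers of $x_0$) contributes exactly $\prod_{j=1}^n x_j^{-1}$, which cancels the numerator $x_1\cdots x_n$, leaving
\[
\vol CRY_{n+1}=\CT_{x_1,\dots,x_n}\Bigl[\prod_{i=1}^n(1-x_i)^{-1}\prod_{1\le i<j\le n}(x_j-x_i)^{-1}\Bigr]=M_n(1,1,1).
\]

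The step I expect to require the most care is making these constant-term manipulations rigorous. I must check that the expansion-region conventions implicit in $\CT_x=\CT_{x_n}\cdots\CT_{x_1}$ — expanding each $(x_j-x_i)^{-1}$ with $i<j$ as a series in $x_i/x_j$, consistent with the vertex order $0<1<\cdots<n+1$ — are compatible with setting $x_{n+1}=1$ and with extracting the $x_0$ constant term around $0$, so that no terms are lost or double-counted and the homogeneity argument genuinely justifies eliminating the redundant sink variable. This convention-matching, rather than the algebra, is the delicate point.

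For the second equality I would substitute $a=b=c=1$ into~\eqref{eq:morris}, using $1+\tfrac{n-1+j}{2}=\tfrac{n+1+j}{2}$, $1+\tfrac j2=\tfrac{j+2}{2}$, and $\tfrac{j+1}{2}+1=\tfrac{j+3}{2}$, to get
\[
M_n(1,1,1)=\prod_{j=0}^{n-1}\frac{\Gamma(\tfrac{n+1+j}{2})\,\Gamma(\tfrac32)}{\Gamma(\tfrac{j+2}{2})^{2}\,\Gamma(\tfrac{j+3}{2})}.
\]
I would then convert the half-integer values via $\Gamma(m+\tfrac12)=\frac{(2m)!}{4^m m!}\sqrt\pi$ and $\Gamma(m+1)=m!$, separating the product by the parity of $j$; the powers of $\sqrt\pi$ cancel and the resulting ratio of factorials telescopes to $\prod_{i=1}^{n-1}\frac{(2i)!}{i!\,(i+1)!}=\prod_{i=1}^{n-1}C_i$. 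This part is routine, though tracking the telescoping carefully is the second place where bookkeeping matters.
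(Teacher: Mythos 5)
Your proposal is correct and follows essentially the same route the paper takes: the first equality is exactly the $a=b=c=1$ case of the argument sketched in Section~\ref{sec: kpf} for Theorem~\ref{thm:ckm} (apply Theorem~\ref{thm:vol-kpf}, write the Kostant partition function as a constant term via the generating function, drop the source variable and set $x_{n+1}=1$ by flow conservation), and the second equality is the direct specialization of Zeilberger's product formula \eqref{eq:morris}. Your expansion-convention worry is handled correctly, since expanding each $(x_j-x_i)^{-1}$ as a series in $x_i/x_j$ is precisely the convention built into the Kostant generating function and into $\CT_x$.
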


Despite the numerous interpretations of $C_n$, no combinatorial proof of Theorem~\ref{thm:CRY} is known. Corteel-Kim-M\'esz\'aros \cite[Theorem 1.2]{CKM} also showed that for any positive $a,b,$ and $c,$ $M_n(a,b,c)$ gives the volume of the flow polytope on the following graph. For positive integer $n$, let $k_{n+2}^{a,b,c}$ denote the graph on vertex set $\{0, 1, \ldots, n+1\}$ with edge $(0,i),$ $i \in [1,n]$ appearing with multiplicity $a,$ edge $(i, n+1)$, $i \in [1,n],$ appearing with multiplicity $b,$ and $(i,j), 1 \leq i < j \leq n,$ appearing with multiplicity $c$ (see Figure~\ref{fig:knabc}). Note that $k_{n+2}=k_{n+2}^{1,1,1}$. Then they showed the following.

\begin{theorem}[{Corteel-Kim-M\'esz\'aros \cite{CKM}}] \label{thm:ckm}
Let $n,a$ and $b$ be positive integers, $c$ be a nonnegative integer, and let $a_i = a-1 + c(i-1)$ for $i=1,\ldots,n$. Then
\begin{equation} 
\label{eq: Mn is kpf and volume} \vol \mathcal F_{k_{n+2}^{a,b,c}}(1,0, \ldots, 0, -1) = K_{k_{n+2}^{a,b,c}}(0,a_1,\ldots,a_n,-\sum_{i=1}^n a_i) =  M_n(a,b,c). 
\end{equation}
\end{theorem}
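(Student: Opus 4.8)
The plan is to establish the two equalities in \eqref{eq: Mn is kpf and volume} separately, the first from Theorem~\ref{thm:vol-kpf} and the second by a constant-term computation. For the first equality I would apply \eqref{eq:vol is kpf} to $G = k_{n+2}^{a,b,c}$, so that the only work is to compute the indegrees of the internal vertices. A vertex $i\in[1,n]$ receives the $a$ parallel edges $(0,i)$ from the source together with, for each $j$ with $1\le j<i$, the $c$ parallel edges $(j,i)$; hence $\indeg(i) = a + c(i-1)$ and $d_i = \indeg(i)-1 = a-1+c(i-1) = a_i$. Substituting into \eqref{eq:vol is kpf} gives $\vol \mathcal F_{k_{n+2}^{a,b,c}}(1,0,\ldots,0,-1) = K_{k_{n+2}^{a,b,c}}(0,a_1,\ldots,a_n,-\sum_{i=1}^n a_i)$ immediately.

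For the second equality I would pass to the standard generating-function description of the Kostant partition function: attaching a variable $x_k$ to each vertex $k$ and a geometric series to each (multi)edge, the number of nonnegative integer flows with prescribed net flow vector $\mathbf a$ is the coefficient of $\prod_k x_k^{a_k}$ in $\prod_{(i,j)\in E}(1-x_i/x_j)^{-m_{ij}}$, where $m_{ij}$ is the multiplicity of $(i,j)$. For $k_{n+2}^{a,b,c}$ with net flow $(0,a_1,\ldots,a_n,-\sum_{i=1}^n a_i)$ this becomes
\begin{equation*}
K = \CT\Bigl[\,x_0^{0}\prod_{i=1}^n x_i^{-a_i}\,x_{n+1}^{\sum_{i=1}^n a_i}\prod_{i=1}^n (1-x_0/x_i)^{-a}\prod_{i=1}^n(1-x_i/x_{n+1})^{-b}\prod_{1\le i<j\le n}(1-x_i/x_j)^{-c}\Bigr].
\end{equation*}

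The simplifications then proceed in three steps. First, since $a_0 = 0$ and the only factor containing $x_0$ is $\prod_i(1-x_0/x_i)^{-a}$, which involves only nonnegative powers of $x_0$, extracting the $x_0^0$-coefficient kills this factor (it contributes $1$); this is the generating-function shadow of the fact that no flow leaves the source under this net flow vector. Second, the remaining integrand is homogeneous of degree $0$ in $(x_1,\ldots,x_{n+1})$, so I may set $x_{n+1}=1$ without altering the constant term. Third, it remains to rewrite $\prod_{i<j}(1-x_i/x_j)^{-c}$ in terms of $\prod_{i<j}(x_j-x_i)^{-c}$: factoring $x_j$ out of each $x_j-x_i$ shows the two products differ by the monomial $\prod_j x_j^{c(j-1)}$, and absorbing this into $\prod_i x_i^{-a_i}$ reduces the exponent $-a_i = -(a-1)-c(i-1)$ to $-(a-1)=-a+1$, which is precisely the power of $x_i$ in Morris's integrand. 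The resulting expression matches $M_n(a,b,c)$ verbatim.

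The step that needs the most care—and the only genuine obstacle—is reconciling the expansion conventions so that the honest coefficient extraction defining $K$ agrees with the iterated constant term $\CT_{x_n}\cdots\CT_{x_1}$ defining $M_n(a,b,c)$. Because every edge runs from a smaller to a larger index, each geometric series naturally expands in nonnegative powers of the lower-indexed variable, and this is exactly the convention under which $(x_j-x_i)^{-c}$ and $(1-x_i/x_{n+1})^{-b}$ are expanded in the Morris identity, with $x_{n+1}$ (now specialized to $1$) playing the role of the largest variable. Once this compatibility is verified, the two constant terms are literally identical and the second equality follows.
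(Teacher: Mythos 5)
Your proposal is correct and follows essentially the same route as the paper: the first equality via Theorem~\ref{thm:vol-kpf} applied to $k_{n+2}^{a,b,c}$ (with $d_i = a-1+c(i-1)$), and the second via the Kostant generating function \eqref{eq: kpf gen function kabc}, dropping the source factor since the net flow at $0$ is zero, setting $x_{n+1}=1$, and rewriting the result as the Morris constant term. Your write-up is in fact more detailed than the paper's sketch (which defers to \cite{CKM}), particularly in verifying the expansion conventions, but the argument is the same.
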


From the product formula in \eqref{eq:morris} it follows that  $M_n(a,b,c)$ is symmetric in $a$ and $b$. This is less clear from the volume and lattice point interpretation of $M_n(a,b,c)$ in \eqref{eq: Mn is kpf and volume}. 

In addition, there is an interesting refinement of the volume formula $M_n(1,1,1)$ of the CRY polytope. Namely, the following conjecture of Chan-Robbins-Yuen \cite[Conj. 2]{CRY} settled by Zeilberger \cite{Z}, refines the product $C_{n}C_{n-1}\cdots C_1$ by splitting $C_{n}$ into a sum of Narayana numbers $N(n,k) = \frac{1}{n}\binom{n}{k}\binom{n}{k-1}$. The original conjecture used the Kostant partition function interpretation and M\'esz\'aros \cite[Thm. 11]{Mproduct} then gave a geometric interpretation of this refinement by providing a collection of interior disjoint polytopes whose volumes equal $N(n,k) \prod_{i=1}^{n-1}C_i$. To state these interpretations, we introduce some notation. Given a graph $k_{n+2}^{a,b,c}$ as above and a $k$-element set $S\in \binom{[n]}{k}$, let $k_{n+2}^{a,b,c}(S)$ be the graph obtained from taking $k_{n+2}^{a,b,c}$, adding $n$ edges $(0,n+1)$, and for each $i\in S$ deleting one of the $a$ incoming edges $(0,i)$ and adding an outgoing edge $(i,n+1)$ (See Figure~\ref{fig:knabc}).

\begin{theorem}[Zeilberger \cite{Z}; M\'esz\'aros \cite{Mproduct}] \label{conj 2 cry}
For a positive integer $n$ and a nonnegative integer $k\leq n$, the product $N(n,k) \prod_{i=1}^{n-1}C_i$ equals the following:
\begin{itemize}
    \item[(i)] The sum of Kostant partition functions $K_{k_{n+2}}(0, a_1, \dots, a_{n}, -\sum_{j=1}^{n} a_j)$ such that for $i \in [n],$ $a_i \leq i-1,$ with $a_i = i-1$ holding for exactly $k$ values of $i$.
    \item[(ii)] The volume of the interior disjoint polytopes $\{\mathcal{F}_{k_{n+2}(S)} \mid S \in \binom{[n]}{k}\}$.
\end{itemize}
\end{theorem}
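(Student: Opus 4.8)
The plan is to treat the two parts as the volume and lattice-point shadows of a single constant-term computation, bridged by Theorem~\ref{thm:vol-kpf}. I would first establish part (i) analytically from the Morris identity, and then deduce part (ii) by turning each summand of (i) into the volume of a flow polytope via Theorem~\ref{thm:vol-kpf}.

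For part (i), introduce a marking variable $t$ and form the generating polynomial
\[
B_n(t) = \sum_{\mathbf a:\, 0\le a_i\le i-1} K_{k_{n+2}}(0,a_1,\dots,a_n,-\textstyle\sum_i a_i)\; t^{\#\{i:\, a_i=i-1\}},
\]
whose coefficient of $t^k$ is the sum in (i). Using the constant-term expression for the Kostant partition function of $k_{n+2}$ underlying Theorem~\ref{thm:ckm} (the $a=b=c=1$ specialization of $M_n$ from Theorem~\ref{morris}), the box constraint $0\le a_i\le i-1$ together with the marking amounts to replacing the geometric factor $(1-x_i)^{-1}$ by $\sum_{a_i=0}^{i-2}x_i^{a_i}+t\,x_i^{i-1}$ for each $i$. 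I would then evaluate the resulting constant term and show that it factors as $B_n(t)=\big(\sum_k N(n,k)\,t^k\big)\prod_{i=1}^{n-1}C_i$, i.e.\ the Narayana polynomial times the product of Catalan numbers; extracting $[t^k]$ gives the claim. The output $N(n,k)=\frac1n\binom{n}{k}\binom{n}{k-1}$ is exactly the kind of binomial expression one expects from a Selberg/Morris-type residue evaluation, and this step is essentially Zeilberger's original argument. A useful sanity check is $B_n(1)=\prod_{i=1}^{n}C_i$, consistent with $\sum_k N(n,k)=C_n$.

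For part (ii), I would compute, for each $S\in\binom{[n]}{k}$, that $k_{n+2}(S)$ has $\indeg(i)=i-[i\in S]$, so Theorem~\ref{thm:vol-kpf} gives $\vol\mathcal F_{k_{n+2}(S)}=K_{k_{n+2}(S)}(0,d^S_1,\dots,d^S_n,-\sum_i d^S_i)$ with $d_i^S=(i-1)-[i\in S]$. The crucial mechanism is that the extra edge $(i,n+1)$ inserted for each $i\in S$ behaves as a slack edge: adding an edge from $i$ to the sink has the effect of summing the partition function over all net-flow values at $i$ up to its cap. Hence each $K_{k_{n+2}(S)}(0,d^S,-\sum_i d^S_i)$ expands as a sum of $K_{k_{n+2}}(0,a,-\sum_i a_i)$ over precisely those $a$ whose saturated coordinates are prescribed by $S$, so summing over $S\in\binom{[n]}{k}$ reassembles exactly the marked sum from part (i). Geometrically, the polytopes $\mathcal F_{k_{n+2}(S)}$ arise as the top-dimensional cells of one subdivision of a flow polytope of total volume $\prod_{i=1}^{n}C_i$, which simultaneously forces them to be interior disjoint and forces the grouped volumes to be the coefficients $N(n,k)\prod_{i=1}^{n-1}C_i$.

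The main obstacle is the constant-term evaluation in part (i): carrying the marking variable $t$ through the Morris/Selberg machinery and recognizing the output as the Narayana polynomial is delicate, and it is precisely the step for which no bijective shortcut is known. A secondary technical point in part (ii) is the bookkeeping of the slack edges and of the degenerate subsets $S$ (for instance those producing a negative cap $d_i^S<0$, which contribute $0$), together with a clean argument that the collection $\{\mathcal F_{k_{n+2}(S)}\}$ genuinely forms an interior-disjoint subdivision rather than merely summing to the correct total.
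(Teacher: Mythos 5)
Your part (ii) is sound and is essentially the paper's own argument: the slack-edge expansion of $K_{k_{n+2}(S)}$ via Theorem~\ref{thm:vol-kpf} is the proof of Theorem~\ref{psi kpf}~(ii), and the interior-disjointness via one round of the reduction rule is Lemma~\ref{psi polytope}, both specialized to $a=b=c=1$. The genuine gap is in part (i), and it is concrete: you impose the lower bound $0\le a_i$ in $B_n(t)$, whereas the theorem (and the paper's $\Psi_n$ in \eqref{eq: definition Psi}) sums over \emph{all} integers $a_i\le i-1$, and the two sums differ. A Kostant partition function with a negative coordinate can be nonzero inside the capped region: for $n=3$ one has $K_{k_5}(0,0,1,-1,0)=1$ (one unit of flow on edge $(2,3)$), a term with exactly two saturated coordinates. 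Including it, the two-saturated sum is $1+1+2+2=6=N(3,2)\,C_1C_2$ as the theorem asserts; your box-constrained polynomial instead gives $[t^2]B_3(t)=5$, and $B_3(1)=9\neq C_1C_2C_3=10$, so your own sanity check already fails and the factorization $B_n(t)=\bigl(\sum_k N(n,k)t^k\bigr)\prod_{i=1}^{n-1}C_i$ is false as stated. The same omission breaks your bridge between (i) and (ii): the slack edge $(i,n+1)$ correctly sums the partition function over all net flows below the cap, \emph{including negative ones}, so the volume sum over $S$ reassembles the unbounded-below sum, not your $B_n(t)$. The fix is to let each unsaturated coordinate range over all values below its cap, which in the constant term is the full series $x_i/(1-x_i)$ — exactly the paper's marking factor $(1+t\,x_i/(1-x_i))$. (Two smaller slips: the factor $(1-x_i)^{-1}$ in the Morris form records flow on the sink edges, not the choice of net flow, so "replacing" it conflates two roles; and summing volumes over $|S|=k$ matches terms with $n-k$ saturated coordinates, so one needs the symmetry $N(n,k)=N(n,n-k+1)$ to reconcile the indexing — a point the paper's own statement also glosses over.)

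Even after these corrections, part (i) is not proved: its entire content — evaluating the marked constant term and recognizing the Narayana polynomial — is deferred to "Zeilberger's original argument," which is precisely the result under review. Here your route genuinely diverges from the paper's, but only as an unfulfilled plan: the paper never pushes a marking through the Selberg/Aomoto machinery. Instead it proves the general product formula $\Psi_n(k,a,b,c)=\binom{n}{k}M_n(a,b,c)\prod_{j=1}^k\frac{a-1+(n-j)c/2}{b+(j-1)c/2}$ (Theorem~\ref{thm:psi product}) by establishing the recurrences \eqref{psi rel 1}--\eqref{psi rel 4}, showing they determine $\Psi_n$ uniquely (Lemma~\ref{psi uniquely defined}), and checking that the product satisfies them; specializing gives $\Psi_n(k,1,1,1)=N(n,k+1)\prod_{i=1}^{n-1}C_i$ (Corollary~\ref{Narayana refinement}), which together with Theorem~\ref{psi kpf} yields the statement. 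To make your proposal a proof you must either carry out the Aomoto-style evaluation in detail or adopt this recurrence route; as written, part (i) is a citation, not a derivation.
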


In \cite{Z}, Zeilberger sketched the proof of Theorem~\ref{conj 2 cry} using Aomoto's refinement of the Selberg integral \cite{aomoto}, but no explicit refinement of $M_n(a,b,c)$ was given (see also \cite{Zletter}).

The aims of this paper are threefold: give such a refinement of  $M_n(a,b,c)$, with a product formula that implies Theorems~\ref{morris} and \ref{thm:CRY}, provide a geometric and lattice point interpretations of the refinement extending Theorems~\ref{conj 2 cry} and \ref{thm:ckm}, and lastly study the symmetry and new relations of $M_n(a,b,c)$. We next describe our main results.

\begin{figure}
    \centering
    \includegraphics[scale=0.8]{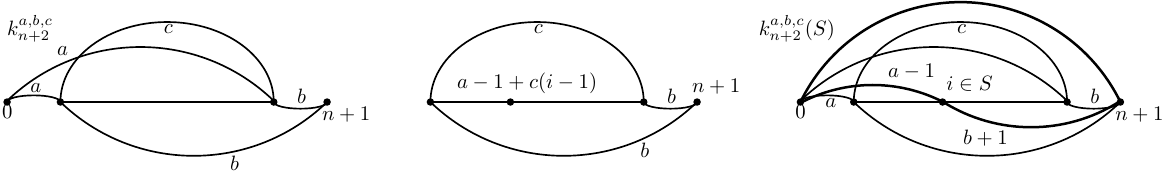}
    \caption{The graph $k_{n+2}^{a,b,c}$, the graph of the Kostant partition function corresponding to the volume of $\mathcal F_{k_{n+2}^{a,b,c}}$, and the graph of one of the polytopes corresponding the volume interpretation of $\Psi_{n}(k,a,b,c)$.} 
    \label{fig:knabc}
\end{figure}

\subsection{A new refinement of \texorpdfstring{$M_n(a,b,c)$}{M_n(a,b,c)}}

 Our refinement is inspired by a related refinement of $M_n(a,b,c)$ introduced by Baldoni-Vergne \cite{BV} to prove the Morris identity (Theorem~\ref{morris}), for which we extend a Kostant partition function interpretation (see Section~\ref{sec: phi}) but which did not imply Theorem~\ref{conj 2 cry} as a special case.

To state our results, define the constant term
\begin{equation} \label{eq: definition Psi}
    \Psi_n(k,a,b,c) := \CT_x[t^k]\prod_{i=1}^n (1-x_i)^{-b}x_i^{-a+1}(1+t\frac{x_i}{1-x_i}) \prod_{1 \leq i < j \leq n} (x_j - x_i)^{-c}.
\end{equation}

In the case that $k = 0,$ $\Psi_n(0,a,b,c) = M_n(a,b,c)$. We now give Kostant partition function and polytope volume interpretations for $\Psi_n(k,a,b,c),$ as well as an explicit product formula.

\begin{theorem} \label{psi kpf}
For positive integers $n, a,$ and $b,$ nonnegative integer $c,$ and nonnegative integer $k \leq n,$ the constant term $\Psi_n(k,a,b,c)$ equals the following:
\begin{enumerate}
    \item[(i)]  the sum of Kostant partition functions of the form $K_{k_{n+2}^{a,b,c}}(0, a_1, \dots, a_{n}, -\sum_{j=1}^n a_j)$ such that for $i \in [n],$ $a_i \leq a - 1 + c(i-1),$ with $a_i = a - 1 + c(i-1)$ holding for exactly $n-k$ values of $i.$
    \item[(ii)] the volume of the interior disjoint polytopes $\{\mathcal{F}_{k^{a,b,c}_{n+2}(S)} \mid S \in \binom{[n]}{k}\}$. Thus,
\[
\Psi_n(k,a,b,c) = \sum_{S \in \binom{[n]}{k}} \vol  \mathcal{F}_{k^{a,b,c}_{n+2}(S)}.
\]
\end{enumerate}
\end{theorem}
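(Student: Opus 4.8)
The plan is to reduce both interpretations to a single decomposition of the constant term and then regroup the summands in two different ways, using Theorem~\ref{thm:vol-kpf} to bridge volumes and Kostant partition functions. The starting point is the elementary identity
\[
[t^k]\prod_{i=1}^n\Bigl(1+t\tfrac{x_i}{1-x_i}\Bigr)=\sum_{S\in\binom{[n]}{k}}\prod_{i\in S}\frac{x_i}{1-x_i},
\]
so that $\Psi_n(k,a,b,c)=\sum_{S\in\binom{[n]}{k}}\CT_x\,P_S(x)$ with $P_S(x)=\prod_{i\in S}x_i^{-a+2}(1-x_i)^{-b-1}\prod_{i\notin S}x_i^{-a+1}(1-x_i)^{-b}\prod_{i<j}(x_j-x_i)^{-c}$. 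The essential tool I would establish is a \emph{master correspondence}: for integer exponents $m_1,\dots,m_n$ and sink multiplicities $b_1,\dots,b_n$,
\[
\CT_x\prod_{i=1}^n(1-x_i)^{-b_i}x_i^{m_i}\prod_{i<j}(x_j-x_i)^{-c}=K_{G}\Bigl(0,a_1,\dots,a_n,-\textstyle\sum_i a_i\Bigr),\qquad a_i=-m_i+c(i-1),
\]
where $G$ has $b_i$ parallel edges $(i,n+1)$ and $c$ parallel edges $(i,j)$; the source multiplicities are irrelevant, since net flow $0$ at the source forces the edges out of $0$ to carry no flow. I would prove this by expanding in the regime $x_1\ll\cdots\ll x_n$ matching the order $\CT_{x_n}\cdots\CT_{x_1}$: writing $(1-x_i)^{-b_i}=\sum_{p_i}\binom{p_i+b_i-1}{p_i}x_i^{p_i}$ and $(x_j-x_i)^{-c}=x_j^{-c}\sum_{h_{ij}}\binom{h_{ij}+c-1}{h_{ij}}(x_i/x_j)^{h_{ij}}$, the vanishing of the total exponent of each $x_i$ becomes the flow-conservation equation $p_i+\sum_{j>i}h_{ij}-\sum_{\ell<i}h_{\ell i}=a_i$, and the binomial weights count the distributions of flow among parallel edges. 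This is the Postnikov--Stanley/Baldoni--Vergne dictionary underlying Theorem~\ref{thm:ckm}, extended to arbitrary net-flow vectors and non-uniform sink multiplicities.

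For part (ii) I apply the master correspondence to $P_S(x)$ directly. For $i\in S$ the factor $(1-x_i)^{-b-1}$ records $b+1$ sink edges and the exponent $m_i=-a+2$ gives net flow $(a-2)+c(i-1)$; for $i\notin S$ one gets $b$ sink edges and net flow $(a-1)+c(i-1)$. A short indegree count in $k^{a,b,c}_{n+2}(S)$ shows $\indeg(i)-1$ equals exactly these values, so $\CT_x P_S(x)=K_{k^{a,b,c}_{n+2}(S)}(0,d_1,\dots,d_n,-\sum_i d_i)$ with $d_i=\indeg(i)-1$, which by Theorem~\ref{thm:vol-kpf} equals $\vol\mathcal F_{k^{a,b,c}_{n+2}(S)}$. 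Summing over $S$ yields the claimed identity. The remaining geometric assertion---that $\{\mathcal F_{k^{a,b,c}_{n+2}(S)}\}_{S\in\binom{[n]}{k}}$ are interior disjoint---I would obtain by extending M\'esz\'aros's argument for Theorem~\ref{conj 2 cry}(ii), realizing the collection as non-overlapping cells of a subdivision of a common flow polytope (e.g.\ via the DKK framework used elsewhere in the paper).

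For part (i) I keep the same sum over $S$ but regroup by net-flow vector. For each $i\in S$ I re-expand the extra sink factor as a geometric series, $(1-x_i)^{-b-1}=(1-x_i)^{-b}\sum_{r_i\ge0}x_i^{r_i}$, so the $i$-th monomial exponent becomes $-a+2+r_i$, the net flow drops to $a_i=(a-1)+c(i-1)-1-r_i$, and the weight reverts to the uniform $\binom{p_i+b-1}{p_i}$ of $k^{a,b,c}_{n+2}$. Hence
\[
\Psi_n(k,a,b,c)=\sum_{S\in\binom{[n]}{k}}\ \sum_{(r_i)_{i\in S}\ge0}K_{k^{a,b,c}_{n+2}}\Bigl(0,a_1,\dots,a_n,-\textstyle\sum_i a_i\Bigr),
\]
with $a_i=a-1+c(i-1)$ for $i\notin S$ and $a_i<a-1+c(i-1)$ for $i\in S$. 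The map $(S,(r_i))\mapsto(a_1,\dots,a_n)$ is a bijection onto the vectors with $a_i\le a-1+c(i-1)$ for all $i$ and equality for exactly $n-k$ indices (recover $S$ as the set of strict indices and $r_i=a-2+c(i-1)-a_i$), which is precisely the index set in part~(i).

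I expect the obstacle to be twofold. Algebraically, the delicate point is pinning down the master correspondence for non-maximal net-flow vectors: one must verify that the iterated constant term $\CT_{x_n}\cdots\CT_{x_1}$ selects exactly the flow-conservation solutions with the correct multiplicities, and that the re-expansion of $(1-x_i)^{-b-1}$ remains within the same expansion regime so that no spurious contributions arise. Geometrically, the genuinely nontrivial step is interior-disjointness in part~(ii); unlike the volume identity it does not follow formally from the constant-term bookkeeping, and requires exhibiting the $\mathcal F_{k^{a,b,c}_{n+2}(S)}$ as non-overlapping cells, generalizing M\'esz\'aros's construction.
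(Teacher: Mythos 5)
Your argument is correct and is, at its core, the same proof as the paper's. The ``master correspondence'' you set up is exactly the dictionary behind Theorem~\ref{thm:ckm}: specialize the generating function \eqref{eq: kpf gen function kabc} at zero net flow at the source, set $x_{n+1}=1$, and rewrite $(1-x_ix_j^{-1})^{-c}=x_j^{c}(x_j-x_i)^{-c}$; allowing the sink multiplicities $b_i$ to vary costs nothing since those factors are independent across $i$. Your part~(i) regrouping, re-expanding $(1-x_i)^{-b-1}=(1-x_i)^{-b}\sum_{r\geq 0}x_i^{r}$ for $i\in S$, is the paper's identity $\CT_{x_i}\tfrac{x_i}{1-x_i}f=\sum_{r\geq 1}[x_i^{-r}]f$, i.e.\ summing over all strictly smaller net flows at the $k$ chosen vertices. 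Your part~(ii), reading the factor $(1-x_i)^{-b-1}$ as $b+1$ sink edges, is the generating-function version of the paper's flow-level move: delete one edge $(0,i)$ and add one edge $(i,n+1)$ to absorb the slack and force $a_i=\indeg(i)-1$; both arguments then invoke Theorem~\ref{thm:vol-kpf}, and on both the $n$ parallel edges $(0,n+1)$ are harmless because the source has zero net flow. Your indegree bookkeeping in $k^{a,b,c}_{n+2}(S)$ checks out.

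The one genuine gap is the step you flag yourself: interior disjointness. Your instinct to realize the $\mathcal{F}_{k^{a,b,c}_{n+2}(S)}$ as cells of a subdivision of a common flow polytope, extending M\'esz\'aros's construction, is the right one, but the DKK framework is not the tool for it: DKK produces unimodular triangulations of a single framed $\mathcal{F}_G$ into simplices, whereas the cells $\mathcal{F}_{k^{a,b,c}_{n+2}(S)}$ are not simplices and form a much coarser subdivision. What is actually needed is Proposition~\ref{reduction proposition}: in $k^{a,b+1,c}_{n+2}$, apply the reduction rule to the edge pair $(0,i),(i,n+1)$ once at each internal vertex $i$. Each application either keeps $a$ copies of $(0,i)$ and drops to $b$ copies of $(i,n+1)$, or drops to $a-1$ copies of $(0,i)$ and keeps $b+1$ copies of $(i,n+1)$, and in either case creates one edge $(0,n+1)$ --- which is precisely where the $n$ parallel edges $(0,n+1)$ in the definition of $k^{a,b,c}_{n+2}(S)$ come from. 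The $2^n$ resulting graphs are exactly the $k^{a,b,c}_{n+2}(S)$ for $S\subseteq[n]$, so the subdivision lemma gives both interior disjointness and $\bigcup_{S\subseteq[n]}\mathcal{F}_{k^{a,b,c}_{n+2}(S)}\equiv\mathcal{F}_{k^{a,b+1,c}_{n+2}}$; this is the content of Lemma~\ref{psi polytope} in the paper, and restricting to $S\in\binom{[n]}{k}$ gives the disjointness claimed in the theorem.
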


We see that when $a = b = c = 1,$ the Kostant partition function interpretation of $\Psi_n(k,a,b,c)$ reduces to Theorem~\ref{conj 2 cry}, giving that
$\Psi_n(k,1,1,1) =N(n,k) \prod_{i=1}^{n-1}C_i$. As a corollary, our constant term $\Psi_n(k,a,b,c)$ refines the Morris constant term  $M_n(a,b+1,c)$. 

\begin{corollary} \label{cor:M refinement} Let $n,a,$ and $b$ be positive integers, and let $c$ be a nonnegative integer. Then
\begin{equation} \label{eq: Psi refine M}
    M_n(a,b+1,c) = \sum_{k=0}^n \Psi_n(k,a,b,c).
\end{equation}
\end{corollary}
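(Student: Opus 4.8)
The plan is to interpret the sum over $k$ as a specialization of the auxiliary variable $t$. First I would observe that in the definition \eqref{eq: definition Psi} of $\Psi_n(k,a,b,c)$, the factor $\prod_{i=1}^n\bigl(1+t\tfrac{x_i}{1-x_i}\bigr)$ is a polynomial in $t$ of degree at most $n$, being a product of $n$ linear factors. Hence the coefficients $[t^k]$ for $k=0,\ldots,n$ account for all powers of $t$, and $\sum_{k=0}^n[t^k]$ of this polynomial is simply its value at $t=1$. Since $\CT_x$ operates only on the $x$-variables, it commutes with the substitution $t=1$, so summing over $k$ yields
\[
\sum_{k=0}^n \Psi_n(k,a,b,c) = \CT_x\prod_{i=1}^n (1-x_i)^{-b}x_i^{-a+1}\Bigl(1+\tfrac{x_i}{1-x_i}\Bigr)\prod_{1\le i<j\le n}(x_j-x_i)^{-c}.
\]

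Next I would carry out the algebraic simplification of the specialized factor. For each $i$ one has
\[
1+\frac{x_i}{1-x_i}=\frac{(1-x_i)+x_i}{1-x_i}=(1-x_i)^{-1},
\]
so that combining this with the existing $(1-x_i)^{-b}$ increments the exponent to $-(b+1)$. The right-hand side above therefore collapses to
\[
\CT_x\prod_{i=1}^n (1-x_i)^{-(b+1)}x_i^{-a+1}\prod_{1\le i<j\le n}(x_j-x_i)^{-c},
\]
which matches the definition of $M_n(a,b+1,c)$ verbatim, completing the argument.

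The hard part will essentially not arise: the corollary is a direct consequence of the design of $\Psi_n$, in which the variable $t$ records, factor by factor, the binary choice between the two summands that recombine into $(1-x_i)^{-1}$ at $t=1$. The only point meriting a brief check is that the summation range $k=0,\ldots,n$ exhausts the coefficients of the degree-$n$ polynomial in $t$, which holds since its leading coefficient $\prod_{i=1}^n \tfrac{x_i}{1-x_i}$ is nonzero; no subtler analysis is needed.
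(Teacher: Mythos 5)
Your proof is correct, but it takes a genuinely different route from the paper's. You work directly with the constant-term definition: since $\prod_{i=1}^n\bigl(1+t\tfrac{x_i}{1-x_i}\bigr)$ has degree at most $n$ in $t$, summing the coefficients $[t^k]$ over $k=0,\ldots,n$ amounts to setting $t=1$ (the finite sum commutes with $\CT_x$ by linearity), and the identity $1+\tfrac{x_i}{1-x_i}=(1-x_i)^{-1}$, valid at the level of power series expansions, merges the extra factor into $(1-x_i)^{-b}$ to produce $(1-x_i)^{-(b+1)}$, i.e.\ exactly the integrand of $M_n(a,b+1,c)$. The paper instead gives two proofs, both routed through the combinatorial interpretations of $\Psi_n$: one sums the Kostant partition function interpretation of Theorem~\ref{psi kpf}~(i) over $k$, absorbing the inequality constraints $a_i \le a-1+c(i-1)$ by adding an extra edge from each vertex to the sink so as to recover the lattice-point interpretation of $M_n(a,b+1,c)$; the other computes volumes on both sides of the polytope subdivision of Lemma~\ref{psi polytope} and invokes Theorem~\ref{thm:ckm} and Theorem~\ref{psi kpf}~(ii). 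Your argument is shorter and more self-contained---it needs only the definitions of $M_n$ and $\Psi_n$ and shows the identity is built into the design of the $t$-variable---whereas the paper's proofs carry the extra geometric and enumerative content (the refinement realized at the level of lattice points and of an explicit interior-disjoint subdivision) that is the actual point of the paper. One cosmetic remark: your closing observation that the leading coefficient $\prod_{i=1}^n \tfrac{x_i}{1-x_i}$ is nonzero is unnecessary; all that matters is that no powers of $t$ beyond $t^n$ occur, so the range $k=0,\ldots,n$ exhausts the coefficients regardless of whether the top coefficient vanishes.
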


We also compute the following explicit product formula for $\Psi_n(k,a,b,c)$ that completes our refinement and new proof of the Morris identity.

\begin{theorem}\label{thm:psi product}
For positive integers $n, a,$ and $b,$ nonnegative integer $c,$ and nonnegative integer $k \leq n,$ the constant term $\Psi_n(k,a,b,c)$ is given by
\begin{equation} \label{eq: product formula Psi}
    \Psi_n(k,a,b,c) = \binom{n}{k}M_n(a,b,c)\prod_{j=1}^{k}\frac{a-1+(n-j)\frac{c}{2}}{b+(j-1)\frac{c}{2}}.
\end{equation}
\end{theorem}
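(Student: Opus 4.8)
The plan is to fix $a,b,c$ and induct on $k$, using the Morris constant term $M_n(a,b,c)=\Psi_n(0,a,b,c)$ (supplied by Theorem~\ref{morris}) as the base case. First I would extract the coefficient of $t^k$ from the generating product, writing
\[
\Psi_n(k,a,b,c)=\CT_x\Big[e_k\big(\tfrac{x_1}{1-x_1},\dots,\tfrac{x_n}{1-x_n}\big)\,F\Big],\qquad F:=\prod_{i=1}^n x_i^{-a+1}(1-x_i)^{-b}\prod_{1\le i<j\le n}(x_j-x_i)^{-c},
\]
since $[t^k]\prod_i(1+t\tfrac{x_i}{1-x_i})=e_k(\tfrac{x_1}{1-x_1},\dots,\tfrac{x_n}{1-x_n})$, the $k$-th elementary symmetric polynomial. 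Abbreviating $\langle g\rangle:=\CT_x[g\,F]$, the target reduces to the one-step recursion
\[
\Psi_n(k,a,b,c)=\frac{n-k+1}{k}\cdot\frac{a-1+(n-k)\tfrac c2}{b+(k-1)\tfrac c2}\,\Psi_n(k-1,a,b,c),
\]
because iterating it from $\Psi_n(0,a,b,c)=M_n(a,b,c)$ telescopes to exactly $\binom nk M_n(a,b,c)\prod_{j=1}^k\frac{a-1+(n-j)c/2}{b+(j-1)c/2}$, which is \eqref{eq: product formula Psi}.

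Second, to establish this recursion I would follow the Aomoto/Baldoni--Vergne contiguity technique adapted to constant terms. The engine is the vanishing of the residue of a derivative, $\Res_{x_p}\partial_{x_p}(H)=0$ for any formal Laurent series $H$; applied to $H$ a suitable monomial multiple of an insertion $P$ times $F$, and combined with the conversion $\CT_{x_p}g=\Res_{x_p}(x_p^{-1}g)$, this plays the role of ``integration by parts with no boundary term'' in the Selberg setting. The logarithmic derivative
\[
\frac{\partial_{x_p}F}{F}=\frac{-a+1}{x_p}+\frac{b}{1-x_p}-c\sum_{j\neq p}\frac{1}{x_p-x_j}
\]
then turns the resulting identity into a linear relation among constant terms of $F$ against $x_p^{-1}P$, $(1-x_p)^{-1}P$, $(x_p-x_j)^{-1}P$, and $\partial_{x_p}P$. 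Choosing $P=\tfrac{x_p}{1-x_p}\prod_{i\in T}\tfrac{x_i}{1-x_i}$ for a $(k-1)$-subset $T\not\ni p$, summing over $p$, and symmetrizing collapses these relations into a single identity relating $\langle e_k(\tfrac{x}{1-x})\rangle$ and $\langle e_{k-1}(\tfrac{x}{1-x})\rangle$.

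The main obstacle is extracting the exact rational coefficient, i.e. checking that the symmetrized relation closes on precisely $e_k$ and $e_{k-1}$ with the stated ratio rather than spilling into other symmetric functions. Two features demand care. The Vandermonde term $-c\sum_{j\neq p}(x_p-x_j)^{-1}$, after summing over $p$ and pairing the contributions of $(p,j)$ with $(j,p)$, is responsible for the half-integer shifts $(n-k)\tfrac c2$ and $(k-1)\tfrac c2$; getting the split between the numerator weight $n-k$ and the denominator weight $k-1$ correct is the crux, and amounts to tracking how many of the $k$ inserted factors $\tfrac{x_i}{1-x_i}$ have been ``used'' by the differentiation versus how many remain. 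Separately, because the insertion is $\tfrac{x_i}{1-x_i}$ rather than $x_i$, the $(1-x_p)^{-2}$ terms produced by $\partial_{x_p}P$ and by the $(1-x_p)^{-1}$ in the log-derivative encode the parameter shift $b\mapsto b+1$ on an inserted variable (indeed $x^{-a+1}(1-x)^{-b}\cdot\tfrac{x}{1-x}=x^{-(a-1)+1}(1-x)^{-(b+1)}$), and this is exactly where the denominator $b+(k-1)\tfrac c2$ enters rather than $a$-type data.

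Finally, I would record two consistency checks that also guard against sign or indexing errors: the closed form must collapse to $M_n(a,b,c)$ at $k=0$, and summing \eqref{eq: product formula Psi} over $k$ must reproduce $M_n(a,b+1,c)$ as in Corollary~\ref{cor:M refinement}. As an alternative derivation that sidesteps the constant-term integration by parts, one can instead use $(1-x_i)^{-b}(1+t\tfrac{x_i}{1-x_i})=(1-x_i)^{-b-1}(1-(1-t)x_i)$, expand $\prod_i(1-(1-t)x_i)=\sum_m(t-1)^m e_m(x)$ to obtain $\Psi_n(k,a,b,c)=\sum_{m\ge k}\binom mk(-1)^{m-k}\CT_x[e_m(x)\,F']$ with $F'$ equal to $F$ after replacing $b$ by $b+1$, invoke the classical Aomoto evaluation of the Morris average of $e_m(x)$, and simplify the resulting alternating binomial sum; this trades the contiguity argument for a parameter-dictionary computation and a hypergeometric summation.
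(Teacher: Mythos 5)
Your proposal is correct in substance, and its technical core coincides with the paper's: the one-step recursion you aim for is exactly relation \eqref{psi rel 4} of Lemma~\ref{psi recurrences}, and your plan for proving it (vanishing of $\Res_{x_p}\partial_{x_p}$, the logarithmic derivative of $F$, insertion of $\frac{x_p}{1-x_p}\prod_{i\in T}\frac{x_i}{1-x_i}$, then summing and symmetrizing so that the Vandermonde contributions of $(p,j)$ and $(j,p)$ pair up to produce the weights $k-1$ and $n-k$) is precisely the computation the paper carries out. Where you genuinely differ is the logical superstructure. You telescope the single relation \eqref{psi rel 4} from the base case $k=0$, which is purely definitional ($\Psi_n(0,a,b,c)=M_n(a,b,c)$ requires nothing from Theorem~\ref{morris}), treating $M_n(a,b,c)$ as an unevaluated constant term; since \eqref{eq: product formula Psi} is stated relative to $M_n(a,b,c)$, this suffices and lets you bypass relations \eqref{psi rel 1}--\eqref{psi rel 3}, the uniqueness argument (Lemma~\ref{psi uniquely defined}), and all Gamma-function verifications. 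The paper instead proves four relations, shows they uniquely determine $\Psi_n(k,a,b,c)$, and checks that the explicit product (with $M_n$ replaced by its Gamma-product evaluation) satisfies them; that longer route is what makes the argument a self-contained new proof of the Morris identity itself, one of the paper's stated goals, which your induction does not recover. Two points to attend to in execution: the symmetrization must carry signs (antisymmetrization) when $c$ is odd and be a plain symmetrization when $c$ is even, since the parity of $c$ determines the symmetry type of the kernel --- the paper splits into exactly these two cases; and your consistency check that summing over $k$ yields $M_n(a,b+1,c)$ is legitimate and non-circular, since Corollary~\ref{cor:M refinement} is proved in the paper independently of the product formula. Your alternative derivation via Aomoto's evaluation of the Morris average of $e_m(x)$ is a genuinely different and plausible route, but as sketched it leaves the alternating hypergeometric summation unverified.
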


We show Theorem \ref{thm:psi product} by proving four recurrence relations satisfied by $\Psi_n(k,a,b,c)$, by proving these relations uniquely define $\Psi_n(k,a,b,c),$ and by proving the product formula also satisfies these relations. This closely follows the approach of Baldoni-Vergne \cite[p. 10]{BV} in their proof of the Morris identity. However, our proofs are combinatorial rather than algebraic, with the notable exception of the proof of the relation \eqref{psi rel 4}, which after a reformulation states that for $1 \leq k \leq n$, 
\[\label{eq: key} 
k(b+(k-1)c/2)\cdot \Psi_{n}(k,a,b,c) = (n-k+1)(a-1+(n-k)c/2) \cdot \Psi_{n}(n-k+1,b+1,a-1,c).\tag{$\star$}\]
We leave as an open problem to prove this relation combinatorially, which would then imply a combinatorial proof of the volume formula for the CRY polytope (Theorem~\ref{thm:CRY}).

\subsection{A fundamental symmetry of \texorpdfstring{$M_n(a,b,c)$}{M_n(a,b,c)}}

We also explain the symmetry $M_n(a,b,c)=M_n(b,a,c)$ with the volume and lattice point interpretations of \eqref{eq: Mn is kpf and volume}. In particular, we use a triangulation of flow polytopes of Danikov-Karzanov-Koshevoy \cite{DKK} and a correspondence from \cite{MMS} to give a bijection between the lattice points of $\mathcal{F}_{k_{n+2}^{a,b,c}}(0,a_1,a_2,\ldots,-\sum_i a_i)$ and $\mathcal{F}_{k_{n+2}^{b,a,c}}(0,b_1,b_2,\ldots,-\sum_i b_i)$ where $a_i = a-1+c(i-1)$ and $b_i=b-1+c(i-1)$ for $i=1,\ldots,n$. The bijection holds for any graph $G$ and as a special case we obtain a bijection of Postnikov \cite{Pos} between lattice points of $(p-1)\Delta^{q-1}$ and $(q-1)\Delta^{p-1}$ further studied in \cite{GNP}.

\subsection{Outline} The rest of this paper is structured as follows. In Section~\ref{sec: bg}, we establish basic theory surrounding flow polytopes, Kostant partition functions, and the Morris constant term identity. This includes closed formulas and asymptotics for special cases of $M_n(a,b,c)$. Section~\ref{sec:vol proof} gives a new recursive proof of Theorem~\ref{thm:vol-kpf} by extending a well-known subdivision relation of flow polytopes to integer flows. In Section~\ref{sec: symmetry} we give three proofs of the symmetry $M_n(a,b,c)=M_n(b,a,c)$ including a bijection between lattice points of two flow polytopes. In Section~\ref{sec: psi},  we prove our results for $\Psi_n(k,a,b,c)$, including Theorem~\ref{psi kpf}, Corollary~\ref{cor:M refinement}, and Theorem~\ref{thm:psi product}. In Section~\ref{sec: phi}, we apply our methods for $\Psi_n(k,a,b,c)$ to the Baldoni-Vergne constant term and prove Theorem~\ref{phi kpf}, and in Section~\ref{sec: remarks} we provide final remarks and some open questions.

\section{Background and Notation} \label{sec: bg}

\subsection{Flow polytopes and their subdivisions}

Given a loopless acyclic connected digraph $G$ with vertex set $\{0,1,\ldots,n,n+1\}$ and $m$ edges, we orient edge $(i,j)$  from $i$ to $j$ if $i < j.$ We can then represent each edge $(i,j)$ by the positive type $A_n$ root $\alpha(i,j) = e_i - e_j$. We also define $M_G$ to be the $(n+2) \times m$ matrix whose columns are given by the multiset $\{\{\alpha(e)\}\}_{e \in E(G)}.$ 

Then given a net flow vector $\mathbf{a} = (a_0,a_1,\dots, a_n, -\sum_{i=0}^{n} a_i)$, where $a_i$ represents the net flow at vertex $i$, we define an \textit{$\mathbf{a}$-flow} $\mathbf{f}_G$ as a vector $\mathbf{f}_G = (f(e))_{e \in E(G)}$ satisfying $M_g\mathbf{f}_G = \mathbf{a}.$ We now define the \textit{flow polytope} $\mathcal F_G(\mathbf{a})$ as the set of all $\mathbf{a}$-flows on $G.$ More precisely, $\mathcal F_G(\mathbf{a}) := \{\mathbf{f}_G \in \mathbb{R}^m_{\geq 0}\mid M_G\mathbf{f}_g = \mathbf{a}\}.$ In the absence of an explicit vector $\mathbf{a}$, it is implied that $\mathbf{a} = (1, 0, \ldots, 0, -1).$ In other words, $\mathcal{F}_G := \mathcal{F}_G(1,0, \ldots, 0, -1).$ If $G$ has a unique source $0$ and sink $n+1$, then the \textit{dimension} of $\mathcal{F}_G$ is $m-n-1$. The vertices of $\mathcal{F}_G$ are given by unit flows along maximal directed paths from the source to the sink called \textit{routes}. 

Next we define a notion of equivalence for flow polytopes. Let $\aff(\cdot)$ denote affine span. For two flow polytopes $P \subset \mathbb R^{n}$ and $Q \subset \mathbb R^{m},$ we say that $P$ and $Q$ are \textit{integrally equivalent}, denoted $P \equiv Q,$ if there exists an affine transformation $\varphi: \mathbb R^{n} \to \mathbb R^{m}$ that is a bijection both when restricted between $P$ and $Q$ and when restricted between $\aff(P) \cap \mathbb Z^{n}$ and $\aff(Q) \cap \mathbb Z^m.$ Polytopes that are integrally equivalent share many similar properties, including the same volume and Ehrhart polynomials.

For a digraph $G$ as above, we denote by $G^r$ the digraph with the same vertices and edges $E(G^r)=\{(i,j) \mid (n+1-j,n+1-i) \in E(G)\}$. That is, the digraph obtained from $G$ by reversing the edges and relabeling the vertices $i\mapsto n+1-i$. By reversing the flows, one shows that the flow polytopes of $G$ and $G^r$ with netflow $(1,0,\ldots,0,-1)$ are integrally equivalent. 

\begin{lemma} \label{lem: reverse flow polytopes}
For a loopless digraph $G$ with vertices $\{0,1,\ldots,n+1\}$  having a unique source $0$ and unique sink $n+1$ then $\mathcal{F}_G \equiv \mathcal{F}_{G^r}$.
\end{lemma}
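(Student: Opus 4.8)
The plan is to exhibit an explicit affine bijection realizing the integral equivalence, built directly from the edge-reversal correspondence that defines $G^r$. Since $G$ and $G^r$ have the same number of edges $m$, and $E(G^r)$ is in bijection with $E(G)$ via $(i,j)\leftrightarrow (n+1-j,\,n+1-i)$, I would define a linear map $\varphi:\mathbb{R}^m\to\mathbb{R}^m$ that permutes coordinates according to this correspondence: given a flow $\mathbf{f}=(f(e))_{e\in E(G)}$ on $G$, let $\varphi(\mathbf{f})$ be the vector indexed by $E(G^r)$ whose value on the edge $(i,j)$ equals $f(n+1-j,\,n+1-i)$. The guiding idea is that reversing the direction of every edge (equivalently, reversing the flow) swaps the roles of source and sink, and the relabeling $i\mapsto n+1-i$ then moves the source back to $0$ and the sink back to $n+1$.

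First I would verify that $\varphi$ respects the netflow (conservation) conditions. Fix a vertex $v$ of $G^r$. An edge $(v,j)\in E(G^r)$ leaving $v$ corresponds to the edge $(n+1-j,\,n+1-v)\in E(G)$ entering $n+1-v$, while an edge $(i,v)\in E(G^r)$ entering $v$ corresponds to $(n+1-v,\,n+1-i)\in E(G)$ leaving $n+1-v$. Summing, the net flow of $\varphi(\mathbf{f})$ at $v$ equals the negative of the net flow of $\mathbf{f}$ at $n+1-v$. Thus if $\mathbf{f}$ has netflow $(1,0,\ldots,0,-1)$, the netflow of $\varphi(\mathbf{f})$ at $v$ is $-a_{n+1-v}$, which is $1$ at $v=0$, equal to $-1$ at $v=n+1$, and $0$ for $1\le v\le n$; moreover $\varphi(\mathbf{f})$ is nonnegative since it only permutes the nonnegative entries of $\mathbf{f}$. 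Hence $\varphi$ maps $\mathcal{F}_G$ into $\mathcal{F}_{G^r}$. Applying the identical construction to $G^r$ and using $(G^r)^r=G$ produces a two-sided inverse, so $\varphi$ restricts to a bijection $\mathcal{F}_G\to\mathcal{F}_{G^r}$.

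Finally, because $\varphi$ is a coordinate permutation of $\mathbb{R}^m$, it is an affine (indeed linear) isomorphism carrying $\mathbb{Z}^m$ onto $\mathbb{Z}^m$. Consequently it restricts to a bijection between $\aff(\mathcal{F}_G)\cap\mathbb{Z}^m$ and $\aff(\mathcal{F}_{G^r})\cap\mathbb{Z}^m$, which is exactly the lattice condition in the definition of integral equivalence. Together with the bijection $\mathcal{F}_G\to\mathcal{F}_{G^r}$ from the previous paragraph, this establishes $\mathcal{F}_G\equiv\mathcal{F}_{G^r}$.

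I do not anticipate a genuine difficulty here: the map is just a relabeling of edges, so the only care required is the index bookkeeping in combining the reversal $(i,j)\mapsto(j,i)$ with the relabeling $i\mapsto n+1-i$, together with the sign flip $a_v\mapsto -a_{n+1-v}$ in the netflow. The one conceptual point worth stating cleanly is that reversing flows interchanges source and sink, and that the relabeling is precisely what restores $0$ as the unique source and $n+1$ as the unique sink of $G^r$ under netflow $(1,0,\ldots,0,-1)$.
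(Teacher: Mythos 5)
Your proposal is correct and is exactly the argument the paper has in mind: the paper disposes of this lemma with the single remark that ``by reversing the flows'' one obtains the integral equivalence, and your coordinate-permutation map $\varphi$, sending $f$ to the flow $(i,j)\mapsto f(n+1-j,\,n+1-i)$, is precisely that flow reversal combined with the relabeling $i\mapsto n+1-i$. The netflow sign computation, the two-sided inverse via $(G^r)^r=G$, and the lattice-preservation argument correctly fill in the details the paper leaves implicit.
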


We now give a recursive subdivision of flow polytopes used by Postnikov-Stanley in their unpublished work. See also \cite[Section 4]{MM}. 

Let $G = (\{0, 1, \ldots, n, n+1\}, E)$. We now repeatedly apply the following algorithmic step, called the \textit{reduction rule}: starting with a graph $G_0$ on vertex set $
\{0, 1, \ldots, n, n+1\}$ and $(i,j),(j,k) \in E(G_0)$ for some $i < j < k,$ we reduce $G_0$ to two graphs $G_1$ and $G_2$ with vertex set $
\{0, 1, \ldots, n, n+1\}$ and edge sets \begin{align}
    \label{reduction1} E(G_1) & := E(G_0)\setminus\{(j,k)\}\cup \{(i,k)\}, \\
    E(G_2) & := E(G_0) \setminus\{(i,j)\} \cup \{(i,k)\} \label{reduction2}.
\end{align}

\begin{proposition}[{Subdivision Lemma, Postnikov, Stanley \cite{stanley} (e.g. \cite[Prop. 1]{Mproduct})}] \label{reduction proposition}
Given a graph $G_0$ on the vertex set $
\{0, 1, \ldots, n, n+1\}$ and $(i,j),(j,k) \in E(G_0)$ for arbitrary $i < j < k,$ define $G_1$ and $G_2$ by the above reduction rule. Then we have
\begin{equation*}
    \mathcal F_{G_0} \equiv \mathcal F_{G_1} \cup \mathcal F_{G_2}, \qquad \mathcal F^{\circ}_{G_1} \cap \mathcal F^{\circ}_{G_2} = \varnothing,
\end{equation*}
where $\mathcal{F}^{\circ}_G$ denotes the interior of the polytope $\mathcal F_G$.
\end{proposition}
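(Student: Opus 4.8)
The plan is to exhibit an explicit, flow-preserving bijection between the $\mathbf{a}$-flows on $G_0$ and the union of $\mathbf{a}$-flows on $G_1$ and $G_2$, and to show this correspondence is compatible with the polytope structure. The key observation is that the reduction rule acts only on the three edges $(i,j)$, $(j,k)$, $(i,k)$, leaving all other edges untouched; so the entire argument is local to the flow values on these edges. Let me write $f_{ij}, f_{jk}, f_{ik}$ for the flow on the respective edges of $G_0$ (with $f_{ik}$ possibly $0$ if $(i,k) \notin E(G_0)$, or present as an additional parallel edge if it is).

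First I would set up the affine maps. Given an $\mathbf{a}$-flow $\mathbf{f}$ on $G_0$, the quantities $f_{ij}$ and $f_{jk}$ contribute to the conservation equations at vertices $i,j,k$. I split into two cases according to whether $f_{ij} \ge f_{jk}$ or $f_{ij} < f_{jk}$. In the first case, I send $\mathbf{f}$ to a flow on $G_1$ (which has edges $(i,j),(i,k)$ and no $(j,k)$) by setting the new flow on $(j,k)$-removed configuration to be $f_{ij}' = f_{ij} - f_{jk}$ on $(i,j)$, $f_{ik}' = f_{ik} + f_{jk}$ on $(i,k)$, and leaving everything else fixed; one checks directly that conservation at $i,j,k$ is preserved and all values stay nonnegative precisely because $f_{ij} \ge f_{jk}$. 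In the second case I send $\mathbf{f}$ to a flow on $G_2$ (which has $(j,k),(i,k)$ and no $(i,j)$) by the symmetric assignment $f_{jk}' = f_{jk} - f_{ij}$, $f_{ik}' = f_{ik} + f_{ij}$. Both maps are restrictions of affine lattice-preserving transformations, so they witness integral equivalence on each piece, and the hyperplane $f_{ij} = f_{jk}$ is where the two pieces meet.

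Next I would verify the three claims packaged in the statement. For $\mathcal{F}_{G_0} \equiv \mathcal{F}_{G_1} \cup \mathcal{F}_{G_2}$, I check that the two maps above glue into a single continuous piecewise-affine bijection onto the union, with inverse given by $f_{ij} = f_{ij}'$, $f_{jk} = f_{jk}'$, $f_{ik} = f_{ik}' - \min(f_{ij}', f_{jk}')$ (interpreting the $G_1$ and $G_2$ sides appropriately), and that it carries lattice points to lattice points in both directions, giving the integral equivalence rather than mere volume equality. For the interior-disjointness $\mathcal{F}^{\circ}_{G_1} \cap \mathcal{F}^{\circ}_{G_2} = \varnothing$, I note that images of $G_1$-flows all satisfy $f_{jk} = 0$ in $G_0$-coordinates (more precisely, lie on the side $f_{ij} \ge f_{jk}$ of the splitting hyperplane) while $G_2$-flows lie on the opposite side, so their interiors cannot overlap.

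The main obstacle I expect is bookkeeping rather than conceptual difficulty: handling cleanly the case where $(i,k)$ is already an edge of $G_0$ (so that the reduction produces a multi-edge, and $f_{ik}$ must be tracked as a genuine coordinate) versus the case where it is newly created, and making sure the affine maps are honest bijections on the \emph{affine spans} intersected with the integer lattice, not just on the polytopes as point sets. I would handle this by phrasing everything in terms of the edge-flow coordinate vectors and explicitly writing the matrix of the affine transformation, so that the integral-equivalence conditions of the definition in Section~\ref{sec: bg} can be checked mechanically. An alternative, cleaner route — which I would mention but not rely on — is to invoke the known statement directly from Postnikov--Stanley or \cite{Mproduct} and simply recall the bijection, since this proposition is used here as a tool rather than proved afresh.
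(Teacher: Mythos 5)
Your strategy --- split $\mathcal F_{G_0}$ along the hyperplane $f_{ij}=f_{jk}$ and map each half affinely onto $\mathcal F_{G_1}$ and $\mathcal F_{G_2}$ --- is the right one, and it is exactly the mechanism of the proof the paper points to (the paper does not reprove this proposition; it cites \cite{MM}) and of the paper's own lattice-point analogue, Lemma~\ref{inductive step}. But two of the formulas you commit to are incorrect. First, your inverse does not invert your forward map: on the $G_1$ side the forward map sends $(f_{ij},f_{jk},f_{ik})$ to $(f'_{ij},f'_{ik})=(f_{ij}-f_{jk},\,f_{ik}+f_{jk})$, while your proposed inverse $f_{ij}=f'_{ij}$, $f_{jk}=f'_{jk}$, $f_{ik}=f'_{ik}-\min(f'_{ij},f'_{jk})$ returns $f_{jk}=0$ for every point (the edge $(j,k)$ is absent from $G_1$), so the composite is the identity only on flows with $f_{jk}=0$. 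The value of $f_{jk}$ must be read off the flow on the \emph{new} edge $(i,k)$: the correct inverse is $f_{ij}=f'_{ij}+f'_{(i,k)_{\text{new}}}$, $f_{jk}=f'_{(i,k)_{\text{new}}}$, with all other coordinates unchanged, exactly as in the map $\varphi_1^{-1}$ of Lemma~\ref{inductive step}.

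Second, and this is the substantive gap rather than bookkeeping: your forward map pools the rerouted flow with a pre-existing edge, $f'_{ik}=f_{ik}+f_{jk}$. With that pooling the map is not injective when $(i,k)\in E(G_0)$: the flows with $(f_{ij},f_{jk},f_{ik})=(1,1,4)$ and $(2,2,3)$, identical on all other edges, have the same image $(f'_{ij},f'_{ik})=(0,5)$, so no inverse can exist and integral equivalence fails. The repair is forced, not optional: in $G_1$ and $G_2$ the added copy of $(i,k)$ is a separate parallel edge, hence a separate coordinate; it carries exactly $f_{jk}$ (resp.\ $f_{ij}$), and any old $(i,k)$ edges keep their flow untouched. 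You flag this case as something to ``handle cleanly,'' but the formulas you actually write down are the ones that fail there --- and in this paper the lemma is applied almost exclusively to the multigraphs $k^{a,b,c}_{n+2}$, so parallel edges are the main case, not an edge case. With the new edge tracked as its own coordinate and the inverse corrected as above, your argument is complete and agrees with \cite{MM} and with Lemma~\ref{inductive step}.
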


The subdivision lemma is illustrated in Figure~\ref{fig:subdiv}. The proof can be found in \cite{MM}. Define a graph $G$ to be \textit{reducible} if we can apply the reduction rule to two of its edges (that is, there exists $(i,j), (j,k) \in E(G)$). Otherwise, the graph $G$ is \textit{irreducible}. We now define the reduction tree $\mathcal T(G)$ of a graph $G$. The root of $\mathcal T(G)$ is $G,$ and each node $G_0$ has two children $G_1$ and $G_2$ described by the reduction rule. Each leaf of $\mathcal T(G)$ is hence irreducible. $\mathcal T(G)$ is not unique and depends on the order of reductions applied, but the number of leaves is always the same. 

\begin{figure}
    \centering
    \includegraphics[scale=0.8]{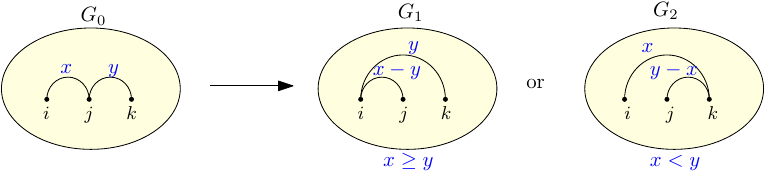}
    \caption{The subdivision lemma reduces a flow polytope to two interior disjoint polytopes whose union is integrally equivalent to the original flow polytope.}
    \label{fig:subdiv}
\end{figure}

\subsection{Kostant partition functions} \label{sec: kpf} We now examine the lattice points of $\mathcal F_G(\mathbf{a})$, i.e. the integer flows. For a graph $G$ on vertex set $
\{0, 1, \ldots, n, n+1\}$ and $(i,j)$ oriented $i \to j$ if $i < j,$ denote by $\mathcal F^\mathbb{Z}_G(\mathbf{a})$ the set of lattice points of the flow polytope $\mathcal F_G(\mathbf{a}),$ and define $K_G(\mathbf{a}):=\# F^\mathbb{Z}_G(\mathbf{a})$ to be the number of such lattice points, called the \textit{Kostant partition function}. The name comes from interpreting the function in the case of $G=k_{n+2}$ as giving the number of ways of writing $\mathbf{a}$ as a $\mathbb{N}$-linear combination of the type $A$ positive roots $e_i - e_j$, where $e_i$ is the $i$th standard vector and $i < j$. In the theory of semisimple Lie algebras there are classical formulas for weight and tensor product multiplicities in terms of $K_{k_{n+2}}({\bf a})$ (see \cite[Section 24]{Hum}).

The generating function of Kostant partition functions on $G$ is given by \begin{equation*}
    \sum_{\bf a} K_G({\bf a}) {\bf x}^{\bf a} = \prod_{(i,j)\in E(G)} (1-x_ix_j^{-1})^{-1},
\end{equation*}

where the term $x_ix_j^{-1}$ represents a single flow along the edge $(i,j)$, and the number of flows with net flow of $j$ at vertex $i$ is represented by the coefficient of $x_i^j.$ In particular, for the graph $k_{n+2}^{a,b,c},$ the generating function can be simplified to
\begin{equation} \label{eq: kpf gen function kabc}
    \sum_{\bf a} K_{k_{n+2}^{a,b,c}}({\bf a}) {\bf x}^{\bf a} = \prod_{i=1}^n (1-x_0x_i^{-1})^{-a}(1-x_ix_{n+1}^{-1})^{-b} \prod_{1 \leq i < j \leq n} (1-x_ix_j^{-1})^{-c}.
\end{equation}

Theorem~\ref{thm:vol-kpf} relates the volume of a flow polytope to a Kostant partition function with a certain net flow vector. Using the generating function for Kostant partition functions, this has very useful implications, such as Theorem~\ref{thm:ckm}. To prove Theorem~\ref{thm:ckm}, first apply Theorem~\ref{thm:vol-kpf} for $k^{a,b,c}_{n+2}$. Since the net flow at the source is zero, we can ignore the term $\prod_{i=1}^n (1-x_0x_i^{-1})^{-a}$. Because the total flow is conserved, the flow at vertex $n+1$ is already determined, so we can simplify the product by setting $x_{n+1} = 1.$ The result follows by extracting the appropriate coefficient in \eqref{eq: kpf gen function kabc}, and expressing it as a constant term extraction (see \cite[Theorem 1.2]{CKM}). This approach thus gives a way to express Kostant partition functions as a constant term.

\subsection{Catalan numbers, Narayana numbers, and Proctor's formula}

The Catalan numbers satisfy the formula $C_n = \frac{1}{n+1}\binom{2n}{n},$ and are one of the most ubiquitous sequences in combinatorics. For instance, the Catalan number $C_n$ counts more than 200 different combinatorial objects \cite{Catalanbook}. The Catalan numbers are refined by the Narayana numbers $N(n,k) = \frac{1}{n}\binom{n}{k}\binom{n}{k-1}$ such that \begin{equation*}
    C_n = \sum_{k=1}^n N(n,k).
\end{equation*}
In analogy to the Catalan numbers, the Narayana number $N(n,k)$ counts the number of lattice paths from $(0,0)$ to $(n,n)$ that do not pass above the line $y = x$ and has $2k-1$ turns. Notably, both Narayana and Catalan numbers appear in Theorem~\ref{conj 2 cry}, where the Narayana refine the volume of the CRY polytope. Proctor's formula describes another form in which Catalan numbers  appear. In \cite{proctor}, Proctor shows that
\begin{equation*}
\prod_{1 \leq i < j \leq n} \frac{2(a-1)+i+j-1}{i+j-1} = \det[C_{n-2+i+j}]_{i,j=1}^{a-1}.
\end{equation*}
We will see Catalan numbers appear in several forms in Section~\ref{sec: Morris} for special cases of the Morris identity, including through Proctor's formula.  

\subsection{The Morris constant term identity \texorpdfstring{$M_n(a,b,c)$}{M_n(a,b,c)}} \label{sec: Morris} We first formalize the notion of a constant term extraction. For a Laurent series $f(x_i)$, we denote the coefficient of $x_i^j$ by $[x_i^j]f(x_i),$ and we denote the constant term in $x_i$ by $\CT_{x_i}f(x_i)$. Similarly, for a Laurent series $f(x_1,x_2, \ldots, x_n),$ we denote the constant term by $\CT_x f(x_1,x_2, \ldots, x_n):= \CT_{x_n}\cdots\CT_{x_1}f(x_1,x_2, \ldots, x_n)$.

Similary, we define the \textit{residue} of $f(x_i)$ with respect to $x_i$ as the coefficient of $x_i^{-1}.$ We denote this by $\Res_{x_i} f(x_i):= [x_i^{-1}] f(x_i)$, and we also use the notation $\Res_x f(x_1, \ldots, x_n) := \Res_{x_n} \cdots \Res_{x_1} f(x_1, \ldots, x_n).$ A useful property of residues is that for a meromorphic function $f(x_1, x_2, \ldots x_n),$ the residue of a partial derivative is always zero. That is, 
$$\Res_{x_i} \frac{\partial}{\partial x_i} f(x_1, x_2, \ldots x_n) = 0.$$ 

We now give some special properties and cases of the Morris constant term identity \eqref{eq:morris}. Note that for $c>0,$ we can substitute $\Gamma(x+1) = x \Gamma(x)$ to obtain the following alternate form of Morris identity
\begin{equation*}
    M_n(a,b,c) = \frac{1}{n!} \prod_{j=0}^{n-1} \frac{\Gamma(a-1 + b + (n-1+j)\frac{c}{2})\Gamma(\frac{c}{2})}{\Gamma(a+j\frac{c}{2})\Gamma(b + j\frac{c}{2})\Gamma(\frac{c}{2}(j+1))}.
\end{equation*}

This form of the Morris identity is used in most of our computational proofs. 

Recall that $M_n(1,1,1)$ is a product of consecutive Catalan numbers. Interestingly, the case $M_n(a,1,1)$ strongly resembles $M_n(1,1,1),$ and is, by Proctor's formula, a product of Catalan numbers times a determinant of Catalan numbers. 

\begin{corollary} \cite{BVkpf, Mproduct}
The constant term $M_n(a,1,1)$ can be expressed as a product of consecutive Catalan numbers times a determinant of Catalan numbers.
\begin{align*}
M_n(a,1,1) & = C_1C_2 \cdots C_{n-1}\prod_{1 \leq i < j \leq n} \frac{2(a-1) + i+ j-1}{i+j-1} = C_1C_2 \cdots C_{n-1}\cdot \det[C_{n-2+i+j}]_{i,j=1}^{a-1}.
\end{align*}
\end{corollary}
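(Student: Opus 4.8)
The plan is to prove the corollary
\[
M_n(a,1,1) = C_1C_2\cdots C_{n-1}\prod_{1 \leq i < j \leq n} \frac{2(a-1) + i+ n-1}{i+n-1} = C_1C_2 \cdots C_{n-1}\cdot \det[C_{n-2+i+j}]_{i,j=1}^{a-1}
\]
by directly specializing the Morris product formula \eqref{eq:morris} at $b=c=1$ and then recognizing the two factors. The second equality is immediate from Proctor's formula as quoted, so the entire content lies in the first equality, where I must extract from the Gamma-quotient a clean product of consecutive Catalan numbers times the displayed rational product.

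First I would set $b=c=1$ in \eqref{eq:morris}, giving $M_n(a,1,1) = \prod_{j=0}^{n-1} \frac{\Gamma(a + \frac{n-1+j}{2})\Gamma(\frac{3}{2})}{\Gamma(a+\frac{j}{2})\Gamma(\frac{j+1}{2})\Gamma(\frac{j}{2}+\frac{3}{2})}$. The natural strategy is to isolate the dependence on $a$. I would write the product as a factor independent of $a$ — namely the value at $a=1$, which by Theorem~\ref{thm:CRY} is exactly $M_n(1,1,1)=\prod_{i=1}^{n-1}C_i$ — times the ratio $M_n(a,1,1)/M_n(1,1,1)$. This ratio telescopes nicely: in it all the $a$-free Gamma factors cancel, leaving only $\prod_{j=0}^{n-1}\frac{\Gamma(a+\frac{n-1+j}{2})/\Gamma(1+\frac{n-1+j}{2})}{\Gamma(a+\frac{j}{2})/\Gamma(1+\frac{j}{2})}$. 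Using the half-integer shift identities for $\Gamma$ (equivalently expanding each ratio $\Gamma(a+s)/\Gamma(1+s)$ as the rising-factorial quotient $\prod(\,\cdot\,)$ and collecting), I expect this to collapse to $\prod_{1\le i<j\le n}\frac{2(a-1)+i+j-1}{i+j-1}$ restricted to the top antidiagonal, i.e. the stated product $\prod_{1 \leq i < j \leq n}\frac{2(a-1)+i+n-1}{i+n-1}$.

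The main obstacle will be reconciling the index sets: the product in the claim runs over the single family $\{2(a-1)+i+n-1\}_{1\le i<j\le n}$, whose terms depend only on $i$ (the $j$ ranges contribute multiplicities), whereas the telescoped Gamma ratio produces terms indexed by $j\in\{0,\dots,n-1\}$ with half-integer arguments. The careful bookkeeping is to match each half-integer shift $\Gamma(a+\frac{m}{2})/\Gamma(1+\frac{m}{2})$ against the appropriate block of linear factors $2(a-1)+\ell$, tracking parities of $m$ so that the $\Gamma(\frac12)=\sqrt{\pi}$ type factors from even and odd arguments pair off and the square roots disappear. I would handle this by the substitution $\Gamma(a+s) = \Gamma(s+1)\prod_{r=0}^{a-2}(a-1-r+s)$ for integer $a\ge 1$ (so the product is empty when $a=1$, recovering $M_n(1,1,1)$), which converts every Gamma ratio into a finite rational product with no half-integer $\Gamma$ values surviving, and then I would verify that the resulting linear factors reindex exactly to $\prod_{1\le i<j\le n}\frac{2(a-1)+i+n-1}{i+n-1}$. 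Finally, the equality with $\det[C_{n-2+i+j}]_{i,j=1}^{a-1}$ is just Proctor's formula applied verbatim to this product, completing the proof.
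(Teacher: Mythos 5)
Your overall strategy is sound, and it is essentially the same Gamma-telescoping style of computation that the paper uses in its Appendix (where the more general $M_n(a,b,1)$ of Corollary~\ref{cor: Mn(a,b,1)} is proved via the ratio $M_n/M_{n-1}$ and Legendre duplication); your ratio $M_n(a,1,1)/M_n(1,1,1)$ in the $a$-direction is arguably cleaner, since every half-integer $\Gamma$ cancels and no duplication formula is needed. Indeed, your reduction gives
\[
\frac{M_n(a,1,1)}{M_n(1,1,1)}
=\prod_{j=0}^{n-1}\frac{\Gamma\bigl(a+\tfrac{n-1+j}{2}\bigr)\,\Gamma\bigl(1+\tfrac{j}{2}\bigr)}{\Gamma\bigl(1+\tfrac{n-1+j}{2}\bigr)\,\Gamma\bigl(a+\tfrac{j}{2}\bigr)}
=\prod_{j=0}^{n-1}\prod_{m=1}^{a-1}\frac{2m+n-1+j}{2m+j},
\]
and this double product does equal $\prod_{1\le i<j\le n}\frac{2(a-1)+i+j-1}{i+j-1}$, i.e.\ exactly the product in Proctor's formula, so with Theorem~\ref{thm:CRY} supplying $M_n(1,1,1)=\prod_{i=1}^{n-1}C_i$ the argument closes. (Minor slip: at $b=c=1$ the factor $\Gamma(b+j\tfrac{c}{2})$ is $\Gamma(\tfrac{j}{2}+1)$, not $\Gamma(\tfrac{j+1}{2})$; this cancels in the ratio, so it is harmless.)

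The genuine gap is your final identification step. The product displayed in the statement, $\prod_{1\le i<j\le n}\frac{2(a-1)+i+n-1}{i+n-1}$, is \emph{not} equal to $\prod_{1\le i<j\le n}\frac{2(a-1)+i+j-1}{i+j-1}$, and your phrase ``restricted to the top antidiagonal'' does not reconcile them: they are different numbers. For $n=3$, $a=2$ one has $M_3(2,1,1)=10=C_1C_2\cdot 5$, and indeed $\prod_{i<j}\frac{2+i+j-1}{i+j-1}=\frac{4}{2}\cdot\frac{5}{3}\cdot\frac{6}{4}=5=C_3$, whereas the displayed product gives $\frac{5}{3}\cdot\frac{5}{3}\cdot\frac{6}{4}=\frac{25}{6}$. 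Consequently Proctor's formula, as quoted with $i+j-1$, does not apply ``verbatim'' to the displayed product, and the verification you defer (``reindex exactly to\ldots'') would fail. What this reveals is that the exponent in the corollary's display is a typo for $i+j-1$: that version is the one consistent with Corollary~\ref{cor: Mn(a,b,1)} at $b=1$ (where the binomial ratios there become $1$) and with the paper's later use of Proctor's formula in the asymptotics. Your plan, executed carefully, proves the corrected statement; but as written, the proposal asserts a false identity at the crucial step, and carrying out your own deferred bookkeeping is precisely where this should have been caught.
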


Next, we list simplified identities for some other special cases of the Morris identities. Proofs of these formulas and other special cases, namely $M_n(a,b,1)$ and $M_n(a,b,2k)$, are rather computational and are hence provided in the Appendix. Intriguingly, the explicit formula for $M_n(a,b,1)$ strongly resembles the formula for $M_n(a,1,1).$

\begin{corollary} \label{cor: Mn(a,b,1)} For positive integers $n,a,$ and $b$, the constant term $M_n(a,b,1)$ is given by
\begin{align*}
M_{2n}(a,b,1) & = C_1C_2 \cdots C_{2n-1} \prod_{1 \leq i < j \leq 2n} \frac{2(a+b-2)+i+j-1}{i+j-1} \prod_{1 \leq i \leq n} \frac{\binom{2a + 2b+ 4i -2}{2a + 2i -1}}{\binom{2a + 2b + 4i - 2}{2i+1}}\\
M_{2n-1}(a,b,1) & = \binom{a+b-2}{a-1}C_1C_2\cdots C_{2n-2} \prod_{1 \leq i < j \leq 2n -1} \frac{2(a+b-2)+i+j-1}{i+j-1} \prod_{1 \leq i \leq n-1} \frac{\binom{2a + 2b + 4i-4}{2a + 2i - 2}}{\binom{2a + 2b + 4i - 4}{2i}}.
\end{align*}
\end{corollary}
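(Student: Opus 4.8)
The plan is to start from the factorial form of the Morris identity displayed above, specialize $c = 1$, and reduce all half-integer Gamma values to ordinary factorials by Legendre's duplication formula $\Gamma(z)\Gamma(z+\tfrac12) = 2^{1-2z}\sqrt{\pi}\,\Gamma(2z)$. Setting $c=1$ gives $\Gamma(c/2) = \sqrt{\pi}$ and
\[
n!\, M_n(a,b,1) = \prod_{j=0}^{n-1} \frac{\sqrt{\pi}\,\Gamma\!\left(a+b-1+\tfrac{n-1+j}{2}\right)}{\Gamma\!\left(a+\tfrac{j}{2}\right)\Gamma\!\left(b+\tfrac{j}{2}\right)\Gamma\!\left(\tfrac{j+1}{2}\right)},
\]
in which each Gamma argument is an integer for one parity of $j$ and a half-integer for the other. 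First I would split the index set $\{0,1,\dots,n-1\}$ into its even and odd parts and, in each of the four Gamma families, pair the term $j=2i$ against the term $j=2i+1$, collapsing each pair through duplication. This is exactly where the parity of $n$ enters: when $n$ is even the pairing is perfect, but when $n$ is odd one term in each family is left unpaired, and these leftover half-integer Gammas are what produce the extra factor $\binom{a+b-2}{a-1}$ in the formula for $M_{2n-1}(a,b,1)$. This is why the statement is divided into the even case $M_{2n}(a,b,1)$ and the odd case $M_{2n-1}(a,b,1)$.

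After applying duplication I would collect the accumulated powers of $2$ and of $\sqrt{\pi}$. In the even case a short count shows there are $3n$ factors of $\sqrt{\pi}$ in the numerator and $3n$ in the denominator, so they cancel exactly, while the powers of $2$ telescope; confirming these cancellations is the first piece of careful bookkeeping. What remains is a ratio of ordinary factorials, which I would reorganize into three blocks: a block depending on neither $a$ nor $b$, the Proctor-type product $\prod_{1\le i<j\le 2n}\frac{2(a+b-2)+i+j-1}{i+j-1}$, and the binomial-ratio product $\prod_{i}\binom{2a+2b+4i-2}{2a+2i-1}\big/\binom{2a+2b+4i-2}{2i+1}$ (and their odd-case analogues). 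Converting the grouped Gamma ratios into these explicit binomial coefficients is the second, and more delicate, piece of bookkeeping.

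To identify the $(a,b)$-independent block as the Catalan product $C_1 C_2\cdots C_{2n-1}$ (respectively $C_1\cdots C_{2n-2}$), I would specialize $a=b=1$: in that case the Proctor-type product and the binomial-ratio product both reduce to $1$, and the leading factor $\binom{a+b-2}{a-1}$ equals $1$, so by Theorem~\ref{thm:CRY} the surviving block must equal $M_{2n}(1,1,1)=\prod_{i=1}^{2n-1}C_i$ (respectively $M_{2n-1}(1,1,1)=\prod_{i=1}^{2n-2}C_i$). Since that block is manifestly independent of $a$ and $b$, this specialization pins it down for all $a,b$, completing the derivation.

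I expect the main obstacle to be the second bookkeeping step: after duplication one obtains a product of Gamma ratios whose arguments carry the fractional shifts built into $a+b-1+\tfrac{n-1+j}{2}$, and matching these cleanly to the asymmetric binomials $\binom{2a+2b+4i-2}{2a+2i-1}$ versus $\binom{2a+2b+4i-2}{2i+1}$ requires tracking which duplicated pair contributes which half of each argument. The odd case compounds this, since the unpaired terms must be recombined correctly to yield the single factor $\binom{a+b-2}{a-1}$ rather than being absorbed into the products. Because these manipulations are purely computational, they are deferred to the Appendix, as indicated in the remark preceding the statement.
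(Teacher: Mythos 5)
Your route is sound in outline but genuinely different from the paper's. The paper never expands the whole product: it computes the one-step ratio $M_n(a,b,1)/M_{n-1}(a,b,1)$ by duplication (which produces double factorials), then eliminates the double factorials by passing to the two-step ratio $M_{n+1}(a,b,1)/M_{n-1}(a,b,1)$, in which the Catalan factors $C_{n-1}C_n$, the Proctor-type factors, and the binomial ratio all appear explicitly; the corollary then follows by telescoping in steps of two from the base cases $M_0(a,b,1)=1$ and $M_1(a,b,1)=\binom{a+b-2}{a-1}$. So in the paper the even/odd split and the odd-case factor $\binom{a+b-2}{a-1}$ come from the two base cases of the telescoping, not from unpaired Gamma factors, and Theorem~\ref{thm:CRY} is never invoked. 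You instead pair $j=2i$ with $j=2i+1$ globally in all four Gamma families and then pin down the $(a,b)$-independent block by setting $a=b=1$ and quoting Theorem~\ref{thm:CRY}; that specialization is a legitimate shortcut (it spares you from recognizing the Catalan product inside the factorial expression), and your bookkeeping claims check out: for $M_{2n}$ there are $3n$ factors of $\sqrt{\pi}$ on each side, and the powers of $2$ cancel to net exponent zero. One imprecision: for odd $n=2m+1$ the unpaired block is $\sqrt{\pi}\,\Gamma(a+b-1+2m)\big/\bigl(\Gamma(a+m)\Gamma(b+m)\Gamma(m+\tfrac12)\bigr)$, which equals $\binom{a+b-2}{a-1}$ only at $m=0$; as you anticipate, it must be recombined with the paired part.

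However, there is a concrete obstruction waiting in your ``delicate bookkeeping'' step, and it is not a defect of your method: the even-case formula is wrong as printed. Your pairing yields the intermediate identity
\[
(2n)!\,M_{2n}(a,b,1)\;=\;\prod_{i=0}^{n-1}\frac{(2a+2b+2n+2i-4)!}{(2a+2i-1)!\,(2b+2i-1)!\,(2i)!},
\]
whose $(a,b)$-dependence matches the stated binomial product only if that product is $\prod_{i=0}^{n-1}\binom{2a+2b+4i-2}{2a+2i-1}\big/\binom{2a+2b+4i-2}{2i+1}$, i.e.\ starting at $i=0$ rather than $i=1$. Numerically, $M_2(2,2,1)=10$, while the printed formula gives $C_1\cdot 3\cdot\binom{10}{5}\big/\binom{10}{3}=63/10$; the shifted product gives $3\cdot\binom{6}{3}\big/\binom{6}{1}=10$. (The odd-case formula is correct as printed; e.g.\ $M_3(2,2,1)=140$ both ways.) So if you try to force your computation to land on the statement verbatim, it will fail; executed honestly, it proves the corrected even-case formula. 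The same off-by-one is latent in the paper's own proof, whose two-step ratio identity is correct but whose final ``plugging in'' step silently shifts the index.
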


By expressing the above special cases in terms of \emph{superfactorials}, we also give in the Appendix asymptotic results for the following values of the Morris identity: $M_n(1,1,1)$, $M_n(n,1,1)$ and $M_n(n,n,1)$. 

Lastly, we also give a formula for $M_n(a,b,c)$ for even $c$, which curiously differs significantly from other computed special cases.

\begin{corollary}\label{cor: Mn(a,b,2k)} For positive integers $n,a,b$ and $k,$ the constant term $M_n(a,b,2k)$ is given by the product
\begin{equation*}
    M_n(a,b,2k) = \prod_{i=1}^n \frac{(a+b-2+(2i-3)k)!k!}{((i-2)k)!(ik)!}\binom{a + b-2+(2i-2)k}{a-1 + (i-1)k}.
\end{equation*}
\end{corollary}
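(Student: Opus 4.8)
The plan is to specialize the Morris identity (Theorem~\ref{morris}) at $c = 2k$ and reduce the resulting Gamma quotient to factorials and a binomial coefficient. The even case is special precisely because $c/2 = k$ is an integer, so every argument appearing in \eqref{eq:morris} is a positive integer and no poles intervene; this is what makes $M_n(a,b,2k)$ a clean product of factorials, in contrast to the half-integer arguments that force the Catalan--determinant shape of $M_n(a,1,1)$. Writing $N = a+b-2$, I would first set $c/2 = k$ in \eqref{eq:morris}, replace each $\Gamma(m)$ by $(m-1)!$, and reindex $j \mapsto i = j+1$ to obtain the intermediate form
\begin{equation*}
M_n(a,b,2k) = \prod_{i=1}^{n} \frac{(N+(n+i-2)k)!\,k!}{(a-1+(i-1)k)!\,(b-1+(i-1)k)!\,(ik)!}.
\end{equation*}

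Next I would reorganize this into the shape of the claimed formula. The key observation is the binomial rewriting
\begin{equation*}
\binom{N+(2i-2)k}{a-1+(i-1)k} = \frac{(N+(2i-2)k)!}{(a-1+(i-1)k)!\,(b-1+(i-1)k)!},
\end{equation*}
valid because the complementary lower entry is exactly $b-1+(i-1)k$. Once this binomial is factored out, both the claimed product and the intermediate form share the per-index factor $k!\,/\,[(a-1+(i-1)k)!\,(b-1+(i-1)k)!\,(ik)!]$, so the whole identity collapses to a telescoping statement about the single family of factorials $(N+mk)!$. Both sides reduce to the window $\prod_{m=n-1}^{2n-2}(N+mk)!$: the intermediate form gives this directly, while on the claimed side the numerators sweep the indices $-1 \le m \le 2n-2$ and the extra denominator factors remove $-1 \le m \le n-2$.

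The main obstacle is the careful index bookkeeping in this telescoping, and in particular the boundary term $i=1$, where the factorial notation formally produces arguments such as $N-k$ and $-k$; these must be read through $\Gamma$ so that the would-be poles and zeros cancel as a ratio rather than being evaluated in isolation. A clean way to avoid the global window count is induction on $n$: from the intermediate factorial form one computes
\begin{equation*}
\frac{M_n(a,b,2k)}{M_{n-1}(a,b,2k)} = \frac{(N+(2n-3)k)!\,k!}{(N+(n-2)k)!\,(nk)!}\binom{N+(2n-2)k}{a-1+(n-1)k},
\end{equation*}
which is exactly the ratio of consecutive partial products of the claimed formula, and the base case $n=1$ is the immediate evaluation $M_1(a,b,2k) = \binom{N}{a-1}$ read off from \eqref{eq:morris}. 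After the boundary terms are handled, all remaining manipulations are routine factorial arithmetic.
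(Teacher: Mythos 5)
Your derivation follows exactly the same route as the paper's appendix proof: specialize \eqref{eq:morris} at $c=2k$, pass from Gamma values to factorials, and telescope the ratio $M_n(a,b,2k)/M_{n-1}(a,b,2k)$. Your intermediate factorial form, your base case $M_1(a,b,2k)=\binom{N}{a-1}$, and your ratio
\begin{equation*}
\frac{M_n(a,b,2k)}{M_{n-1}(a,b,2k)} \;=\; \frac{(N+(2n-3)k)!\,k!}{(N+(n-2)k)!\,(nk)!}\binom{N+(2n-2)k}{a-1+(n-1)k}, \qquad N=a+b-2,
\end{equation*}
are all correct. But your final identification is not: this is \emph{not} the ratio of consecutive partial products of the claimed formula, whose $i=n$ factor has $((n-2)k)!$ in the denominator rather than $(N+(n-2)k)!$. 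The same slip is hidden in your telescoping description, where you treat the denominators of the claimed product as members of the family $(N+mk)!$ for $-1\le m\le n-2$, when the printed statement actually has $(mk)!$. The two agree only when $N=0$, i.e.\ $a=b=1$.

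The mismatch is not your error---it exposes a typo in the paper. In the paper's own proof, the surviving denominator term of the ratio is printed as $\Gamma(1+(n-2)k)$, whereas the correct surviving term (the $j=0$ numerator factor of $M_{n-1}$) is $\Gamma(a+b-1+(n-2)k)$; this typo propagates into the stated corollary. A concrete check: for $n=2$, $a=2$, $b=1$, $k=1$, one computes $M_2(2,1,2)=3$ directly from \eqref{eq:morris} (or by counting the three integer flows via Theorem~\ref{thm:ckm}), and the corrected product gives $1\cdot 3=3$, while the printed formula's $i=1$ factor contains $(N-k)!/(-k)!=0!/(-1)!$, which is $0$ under any Gamma-limit reading, not $1$. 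So what your argument actually proves is the corrected identity
\begin{equation*}
M_n(a,b,2k) \;=\; \prod_{i=1}^n \frac{(a+b-2+(2i-3)k)!\,k!}{(a+b-2+(i-2)k)!\,(ik)!}\binom{a+b-2+(2i-2)k}{a-1+(i-1)k},
\end{equation*}
in which, incidentally, the $i=1$ boundary that worried you is harmless: the two factorials $(N-k)!$ cancel identically as a ratio of equal Gamma values, with no limit argument needed. You should state this discrepancy explicitly and prove the corrected statement, rather than asserting that your (correct) ratio coincides with the printed formula.
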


\section{A recursive proof of Theorem~\ref{thm:vol-kpf}} \label{sec:vol proof}

In this section we give a new recursive proof of Theorem~\ref{thm:vol-kpf} by introducing a subdivision map for the right-hand side of \eqref{eq:vol is kpf}. To give our proof, we first show that all subdivisions reduce to a similar form.

\begin{lemma} \label{base case algorithm}
Every connected directed graph $G$ on vertex set $\{0,1, \ldots, n+1\}$ with unique source $0$ and unique sink $n+1$ can be reduced to subdivisions $G'$ with the same vertex set, unique source and sink and for $i \in [n],$ $\outdeg_{G'}(i) = 1.$
\end{lemma}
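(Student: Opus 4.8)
The plan is to set up a well-founded induction on a monovariant that strictly decreases along every reduction step, and to show that starting from any valid $G$ one can steer the reductions of Proposition~\ref{reduction proposition} so that every resulting graph has all interior out-degrees equal to $1$. First I would record two structural facts. Since $0$ is the unique source and $n+1$ the unique sink of a connected $G$, every interior vertex $i\in[n]$ is reachable from $0$ and reaches $n+1$, so $\indeg_G(i)\ge 1$ and $\outdeg_G(i)\ge 1$. Thus the target condition $\outdeg_{G'}(i)=1$ for all $i\in[n]$ is precisely the statement that no interior vertex carries excess out-degree, and it is an \emph{intermediate} stopping point (such $G'$ are typically still reducible), not a leaf of the reduction tree.

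For the monovariant I would use the total edge length $L(G):=\sum_{(u,v)\in E(G)}(v-u)$. A direct computation shows that applying the reduction rule to $(i,j),(j,k)$ gives $L(G_1)=L(G_0)+(j-i)$ and $L(G_2)=L(G_0)+(k-j)$, so both children have strictly larger $L$; since each reduction removes one edge and adds one edge the edge count $m$ is preserved, and every edge has length at most $n+1$, so $L\le m(n+1)$. Hence $m(n+1)-L(G)$ is a nonnegative integer that strictly decreases along every reduction, which provides the induction.

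The inductive step goes as follows. If all interior out-degrees of $G$ are already $1$, take $G'=G$. Otherwise pick an interior vertex $j$ with $\outdeg_G(j)\ge 2$, an in-edge $(i,j)$, and an out-edge $(j,k)$, and apply the reduction rule. The two children behave differently, which is the key bookkeeping point: $G_1=E\setminus\{(j,k)\}\cup\{(i,k)\}$ leaves \emph{every} in-degree unchanged (it only switches the tail of the unique changed edge into $k$ from $j$ to $i$) while lowering $\outdeg(j)$ by one, and since $j$ keeps out-degree $\ge 1$ the graph $G_1$ still has unique source and sink; on the other hand $G_2=E\setminus\{(i,j)\}\cup\{(i,k)\}$ leaves \emph{every} out-degree unchanged but lowers $\indeg(j)$ by one. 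As both children have strictly larger $L$, the induction hypothesis applies to each and reduces it to graphs with all interior out-degrees $1$; collecting these over the two branches proves the claim for $G$.

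The main obstacle is maintaining \emph{unique source and sink} in the $G_2$ branch: when $\indeg_G(j)=1$, deleting $(i,j)$ makes $j$ a second source. This case is genuinely forced in general, e.g. on $\{0,1,2,3\}$ with edges $(0,1),(1,2),(1,3),(2,3)$ the only vertex of out-degree $\ge 2$ is $1$ and it has in-degree $1$. I would resolve it by observing that in $\mathcal F_{G_2}$ the vertex $j$ then has net flow $0$ and no in-flow, which forces all of its out-edges to carry flow $0$; hence $\mathcal F_{G_2}$ lies in a proper face, is lower-dimensional, and contributes nothing to the volume, so such degenerate branches may be pruned from the subdivision without affecting the volume, leaving only pieces with unique source and sink. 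Making this pruning precise, and confirming it is compatible with the subdivision map for the right-hand side of \eqref{eq:vol is kpf} used in the recursive proof of Theorem~\ref{thm:vol-kpf}, is the crux of the argument.
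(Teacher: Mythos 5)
Your proposal is correct and is essentially the paper's own argument: reduce at any interior vertex of out-degree at least $2$, keep the branch \eqref{reduction1} that deletes an out-edge (which preserves the unique source and sink), and observe that when that vertex has in-degree $1$ the other branch \eqref{reduction2} forces zero flow on all of its out-edges, so it is degenerate and may be disregarded. Your monovariant $L(G)=\sum_{(u,v)\in E(G)}(v-u)$ gives a cleaner termination argument than the paper's appeal to finiteness, and you identify the degenerate branch correctly as $G_2$ (the paper's proof writes $\mathcal F_{G_1}=\varnothing$, an apparent label swap).
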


\begin{proof}
We apply the following algorithm:
\begin{enumerate}
    \item Consider if graph $G$ has a non-empty set $S$ of vertices $i$ such that $\indeg_G(i) > 1$ and $\outdeg_G(i) >1.$ Then we apply the reduction rule at any vertex in $S.$ 
    \item Consider if graph $G$ has a non-empty set $T$ of vertices $i$ such that $\indeg_G(i) = 1$ and $\outdeg_G(i) >1.$ Then we apply the reduction rule at any vertex in $T.$
    
    We note the net flow for a vertex in $T$ is zero, so the flow along the incoming edge must be at least the flow along any of the outgoing edges. Obtaining $G_1$ and $G_2$ as in \eqref{reduction1} and \eqref{reduction2}, applying the map on flows in the subdivision as shown in Figure~\ref{fig:subdiv}, gives that $\mathcal F_{G_1} = \varnothing,$ and can be disregarded. Hence, we see that the uniqueness of the sink and source are also preserved in $G_2$.
    \item We continually apply steps (1) and (2) until $S = T = \varnothing$, at which point we conclude $\outdeg_G(i) = 1$ for $i \in [n].$  
\end{enumerate}
Since the graph is finite, we see the algorithm must terminate.
\end{proof}

We now prove the following lemma, which establishes the base case for our induction.

\begin{lemma}[Base Case] \label{base case} For a graph $G$ on vertex set $\{0,1, \ldots, n+1\}$ with $m$ edges, unique source $0$, unique sink $n+1$, and where $\outdeg_G(i) = 1$ for $i \in [n],$ we have that
\begin{equation*}
    \vol \mathcal F_{G}(1,0,\ldots,0,-1) = K_{G} (0,d_1, \dots, d_{n}, -\sum_{i=1}^{n} d_i) = 1,
\end{equation*}
where $d_i = \indeg_G(i) - 1.$ 
\end{lemma}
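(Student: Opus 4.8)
The plan is to establish both equalities by analyzing the very rigid combinatorial structure forced by the hypothesis $\outdeg_G(i) = 1$ for every $i \in [n]$. The key observation is that under this condition, each internal vertex $i$ has a single outgoing edge, so once a unit of flow enters vertex $i$ it has only one way to leave. This should essentially force the flow polytope to be a single point, and the lattice point count to be exactly $1$.

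First I would prove that $\vol \mathcal F_G(1,0,\ldots,0,-1) = 1$ by showing $\mathcal F_G$ is a point. Under netflow $(1,0,\ldots,0,-1)$, a single unit of flow travels from source $0$ to sink $n+1$, and conservation at each internal vertex $i$ (netflow zero) means the total flow entering $i$ equals the total flow leaving $i$ along its \emph{unique} outgoing edge. I would argue by induction on the vertices in increasing order that the flow on every edge is \emph{determined}: the outgoing edge of vertex $i$ must carry exactly the sum of the flows on the incoming edges of $i$, and since the outgoing edge is unique there is no freedom in distributing that flow. Tracing from the source, all incoming flows are determined before we reach $i$, hence the single outflow is determined, so $\mathbf{f}_G$ is unique. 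A zero-dimensional polytope (a point) has normalized volume $1$. Alternatively, since $\dim \mathcal F_G = m - n - 1$ and here the constraints pin down all coordinates, I would confirm the dimension is $0$.

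Next I would prove $K_G(0,d_1,\ldots,d_n,-\sum_i d_i) = 1$ with $d_i = \indeg_G(i)-1$. Here the netflow at vertex $i$ is $d_i = \indeg_G(i)-1$, and I claim there is a \emph{unique} nonnegative integer flow. Again processing vertices $0,1,\ldots,n$ in order: the netflow equation at vertex $i$ says (sum of incoming flows) $-$ (flow on the unique outgoing edge) $= d_i$, so the unique outgoing flow equals (sum of incoming flows)$-d_i$. Since all incoming flows are already determined at the time we process $i$, the outgoing flow is forced. The remaining point is to verify this forced value is a nonnegative integer; I would check the base relation that a source-adjacent configuration yields nonnegative outflows, using that $d_i = \indeg_G(i)-1 \ge 0$ and that each incoming edge contributes at least one unit in a natural witnessing flow. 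Establishing nonnegativity of the unique candidate solution at every vertex is the main obstacle, since integrality and uniqueness follow immediately from the determinism but positivity requires a short inductive bound; I expect it to follow from summing the netflow equations over an initial segment of vertices to express each outflow as a sum of indegree contributions, which is manifestly nonnegative.

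Finally I would remark that both computations give the common value $1$, completing the proof and furnishing the base case for the induction in the proof of Theorem~\ref{thm:vol-kpf}.
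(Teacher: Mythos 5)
Your first claim --- that $\mathcal F_G(1,0,\ldots,0,-1)$ is a single point --- is false, and this is the main gap. The hypothesis $\outdeg_G(i)=1$ applies only to the \emph{internal} vertices $i\in[n]$; the source can (and typically does) have large outdegree, namely $\outdeg_G(0)=m-n$. The conservation equations do not determine the flows on the source's outgoing edges: they are only constrained to be nonnegative and to sum to $1$, and this is exactly the free part of the polytope. What the hypothesis gives you is that \emph{given} the flows out of the source, every other flow is determined (recursively, the unique out-edge of vertex $i$ carries the inflow of $i$). Hence $\mathcal F_G$ is integrally equivalent to the standard simplex $\Delta^{m-n-1}$ of dimension $m-n-1=\outdeg_G(0)-1$, not to a point, and its normalized volume is $1$ for that reason --- this is what the paper's proof says. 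Your parenthetical check that ``the dimension is $0$'' fails whenever $m>n+1$; for example for $k_3$ (edges $(0,1),(0,2),(1,2)$) the polytope is a segment, not a point.

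In the lattice-point count you have the sign of the netflow backwards, which is why nonnegativity looks like an obstacle when in fact there is none. In this paper the columns of $M_G$ are $e_i-e_j$, so the netflow at a vertex is $(\text{outflow})-(\text{inflow})$. At the source, netflow $0$ forces all of its out-edges to carry flow $0$; at an internal vertex $i$, the unique out-edge must then carry $(\text{inflow at } i)+d_i$, which is automatically a nonnegative integer, so existence and uniqueness of the integer flow are immediate --- again, this is the paper's argument. With your convention, outflow $=$ inflow $-\,d_i$, and summing your equations over an initial segment $\{0,\ldots,i\}$ (which no edge enters, since all edges go forward) forces the total flow leaving that set to equal $-\sum_{j\le i} d_j\le 0$; so whenever some vertex has indegree at least $2$, your equations have \emph{no} nonnegative solution at all, and the inductive bound you hope to prove cannot exist. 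The second half of your argument is salvageable only after flipping the sign, at which point your ``main obstacle'' disappears.
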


\begin{proof}
First we show $\vol \mathcal F_{G}(1,0, \ldots, 0,-1) = 1.$ Since $\outdeg_G(i)=1$ for $i \in [n]$ then the source has outdegree $m - n$, and the flows along these $m-n$ edges determines a unique flow on $G$. To see this, note that the flows of the outgoing edges of vertices in the set $\{0,1, \ldots,i\}$ for $i \in [n]$ determine recursively the outgoing flow at vertex $i+1$. We see that $\mathcal F_G$ is integrally equivalent to a $(m-n-1)$-dimensional simplex and has normalized volume 1.

Next we show that  $K_{G} (0,d_1, \dots, d_{n}, -\sum_{i=1}^{n} d_i)=1.$ We recursively show that there is only one integer flow $f$ with the desired net flow. Since the source has net flow zero, then $f(0,i)=0$ for $i \in [n]$.  Then the flows of the outgoing edges of vertices in the set $\{0,1, \ldots,i\}$ recursively determine the outgoing flow from vertex $i+1$ since $\outdeg_G(i+1)=1$. Thus, only a single integer flow $f$ is possible.
\end{proof}

We now define some notation. For a reducible graph $G_0$ on vertex set $
\{0, 1, \ldots, n, n+1\}$, let $G_1$ and $G_2$ be obtained by equations \eqref{reduction1} and \eqref{reduction2} for fixed $(i,j), (j,k) \in E(G_0).$ Let $d_i' = \indeg_{G_1}(i)-1,$ and let $d_i'' = \indeg_{G_2}(i) - 1.$ Also, let $\mathbf{d} = (0,d_2,\cdots,d_n, -\sum_{i=2}^n d_i)$ and likewise define $\mathbf{d_1} = (0,d_2',\cdots,d_n', -\sum_{i=2}^n d_i')$ and $\mathbf{d_2} = (0,d_2'',\cdots,d_n'', -\sum_{i=2}^n d_i'').$

We prove that if $G_1$ and $G_2$ satisfy Theorem~\ref{thm:vol-kpf}, so does $G_0.$ By the subdivision lemma, we have that $$\vol \mathcal F_{G_0} = \vol \mathcal F_{G_1} + \vol \mathcal F_{G_2}.$$ Hence,
it suffices we show the following lemma.

\begin{lemma}[Inductive Step] \label{inductive step}
Let $G_0,G_1,G_2$ and $\mathbf{d},\mathbf{d_1},\mathbf{d_2}$ be as defined above. Then,
\begin{equation*}
    K_{G_0}(\mathbf{d}) = K_{G_1}(\mathbf{d_1}) + K_{G_2}(\mathbf{d_2}).
\end{equation*}
\end{lemma}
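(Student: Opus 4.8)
The plan is to prove the identity $K_{G_0}(\mathbf{d}) = K_{G_1}(\mathbf{d_1}) + K_{G_2}(\mathbf{d_2})$ by constructing an explicit bijection between the integer $\mathbf{d}$-flows on $G_0$ and the disjoint union of the integer $\mathbf{d_1}$-flows on $G_1$ and the integer $\mathbf{d_2}$-flows on $G_2$. This mirrors the subdivision lemma (Proposition~\ref{reduction proposition}) at the level of lattice points rather than volumes, which is exactly the "extension to integer flows" promised in the section introduction. The central observation is that the net flow vectors $\mathbf{d},\mathbf{d_1},\mathbf{d_2}$ are precisely tuned by the choice $d_i = \indeg(i)-1$: under the reduction rule at $(i,j),(j,k)$, only the indegrees of $j$ and $k$ change. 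Reductions in \eqref{reduction1} and \eqref{reduction2} each delete one edge into $j$ or $k$ and add the edge $(i,k)$, so $G_1$ has $\indeg(k)$ unchanged but $\indeg(j)$ dropped by one, while $G_2$ has $\indeg(j)$ unchanged but... I would first carefully tabulate how $\indeg_{G_1}$ and $\indeg_{G_2}$ differ from $\indeg_{G_0}$ at each vertex, and hence how $d_i', d_i''$ differ from $d_i$.

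First I would set up the bijection from the reduction-rule picture in Figure~\ref{fig:subdiv}. An integer $\mathbf{d}$-flow on $G_0$ assigns nonnegative integers to the edges of $G_0$; the two edges $(i,j)$ and $(j,k)$ carry values, say, $p = f(i,j)$ and $q = f(j,k)$. In $G_1$ and $G_2$ the pair of edges is replaced by the single new edge $(i,k)$ together with one surviving old edge. The standard flow subdivision map redistributes $(p,q)$ according to whether $p \geq q$ or $p < q$ (equivalently $p > q$ or $p \le q$, with the boundary case assigned to exactly one side to keep the images disjoint), sending the flow to $G_1$ in one case and to $G_2$ in the other. I would verify that this assignment (i) produces a valid integer flow on the target graph with the conservation conditions intact, (ii) lands in the correct net flow vector $\mathbf{d_1}$ or $\mathbf{d_2}$, and (iii) is reversible, so that it is a bijection onto the disjoint union. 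The net-flow bookkeeping is where the definitions $d_i' = \indeg_{G_1}(i)-1$ and $d_i'' = \indeg_{G_2}(i)-1$ do the work: one edge is removed at $j$ or $k$, which lowers the corresponding indegree by one, and this is exactly compensated by the change in the net flow demanded at that vertex, so that conservation is self-consistent.

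The main obstacle I anticipate is handling the boundary/overlap case correctly and confirming nonnegativity of the redistributed flows, which is the discrete analogue of the interior-disjointness statement $\mathcal F^\circ_{G_1}\cap \mathcal F^\circ_{G_2}=\varnothing$. Concretely, the values on the new edge $(i,k)$ and on the surviving edges must all remain nonnegative integers, and the lattice points on the shared boundary hyperplane $\{p = q\}$ (or wherever the cases meet) must be counted on exactly one side so that the union is disjoint and no flow is double-counted or lost. A clean way to manage this is to describe the map as $f(i,k) := \min(p,q)$ (or $\max$) on the new edge and adjust the surviving edge by $|p-q|$, then check that the tie case is assigned consistently to one of $G_1,G_2$; alternatively one can argue directly that the two lattice-point sets biject with complementary ranges of the difference $p-q$. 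Once this is pinned down, nonnegativity and integrality are immediate since all operations are additions and subtractions of nonnegative integers with the right inequality guaranteeing the result stays $\ge 0$.

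An alternative, perhaps cleaner, route is purely generating-functional: using the Kostant generating function $\prod_{(i,j)\in E(G)}(1-x_ix_j^{-1})^{-1}$ from Section~\ref{sec: kpf}, the reduction rule corresponds to the algebraic identity
\[
(1 - x_i x_j^{-1})^{-1}(1 - x_j x_k^{-1})^{-1} = (1 - x_i x_k^{-1})^{-1}\left[(1 - x_j x_k^{-1})^{-1} + (1 - x_i x_j^{-1})^{-1} - 1\right],
\]
which, after matching the edge replacements in \eqref{reduction1}--\eqref{reduction2} and extracting the coefficient of the monomial $\mathbf{x}^{\mathbf d}$, yields $K_{G_0}(\mathbf d) = K_{G_1}(\mathbf{d_1}) + K_{G_2}(\mathbf{d_2})$ directly, with the shifted net-flow vectors absorbing the bookkeeping. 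I would likely present the bijective proof as the main argument, since it is in the combinatorial spirit of the paper and makes the disjointness transparent, and mention the generating-function identity as a corroborating check. The key point to get exactly right in either approach is the correspondence between the edge relabelings in the reduction rule and the indegree shifts defining $\mathbf{d_1}$ and $\mathbf{d_2}$.
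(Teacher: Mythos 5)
Your overall architecture is the same as the paper's: split the integer $\mathbf{d}$-flows on $G_0$ according to a comparison of $x=f(i,j)$ and $y=f(j,k)$, send one class to $G_1$ and the other to $G_2$ by redistributing flow onto the new edge $(i,k)$, and check net flows and invertibility. However, your concrete map has a genuine gap. The redistribution you propose (new edge gets $\min(x,y)$, surviving edge gets $|x-y|$) is \emph{flow-conserving}: it produces a flow on the target graph with the \emph{same} net flow vector $\mathbf{d}$. That is fine for $G_1$, where $\mathbf{d_1}=\mathbf{d}$ (the reduction \eqref{reduction1} leaves all indegrees unchanged), but it fails for $G_2$, where $\mathbf{d_2}\neq\mathbf{d}$ because \eqref{reduction2} deletes an edge into $j$ and adds one into $k$, so $d_j''=d_j-1$ and $d_k''=d_k+1$. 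Asserting that this discrepancy is ``exactly compensated'' does not make it so: with your formulas the $G_2$-image consists of $\mathbf{d}$-flows with $f(j,k)\geq 1$ (or $\geq 0$, depending on tie-breaking), not $\mathbf{d_2}$-flows. The missing idea is a unit shift on the surviving edge: in the case $y>x$ one must set $f'(j,k)=y-x-1$ and $f'(i,k)=x$. Subtracting $1$ from the flow on $(j,k)$ lowers the net flow at $j$ by one and raises it at $k$ by one, which is precisely what turns a $\mathbf{d}$-flow into a $\mathbf{d_2}$-flow; the strict inequality $y>x$ guarantees $y-x-1\geq 0$. This also shows the tie $x=y$ is \emph{not} free to assign to either side, as you suggest: it must go to $G_1$, since assigning it to $G_2$ would force the negative value $f'(j,k)=-1$. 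This shifted map, with ties sent to $G_1$, is exactly the paper's proof.

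Your generating-function alternative has the same gap in disguise. The algebraic identity you write is correct, but expanding its right-hand side and multiplying by the factors for the untouched edges gives three graphs, not two: writing $G_3$ for the graph with both $(i,j),(j,k)$ deleted and $(i,k)$ added, and extracting the coefficient of $\mathbf{x}^{\mathbf{d}}$ (the \emph{same} monomial in every term --- nothing in the extraction ``absorbs'' a change of exponent), one obtains
\begin{equation*}
K_{G_0}(\mathbf{d}) \,=\, K_{G_1}(\mathbf{d}) + K_{G_2}(\mathbf{d}) - K_{G_3}(\mathbf{d}),
\end{equation*}
an inclusion--exclusion in which every Kostant partition function is evaluated at $\mathbf{d}$. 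To recover the lemma one still needs $K_{G_2}(\mathbf{d})-K_{G_3}(\mathbf{d})=K_{G_2}(\mathbf{d_2})$: the difference counts $\mathbf{d}$-flows on $G_2$ with $f(j,k)\geq 1$, and identifying these with $\mathbf{d_2}$-flows on $G_2$ is again the unit-shift bijection. So neither route, as you state it, reaches the claimed identity; once you insert the shift, both do, and the bijective one becomes the paper's argument.
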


\begin{proof} 
Note that since $\mathbf{d_1} \neq \mathbf{d_2},$ we have that $\mathcal F_{G_1}^\mathbb{Z}(\mathbf{d_1})$ and $\mathcal F_{G_2}^\mathbb{Z} (\mathbf{d_2})$ are disjoint.
We give a bijection \[\varphi: \mathcal F_{G_0}^\mathbb{Z}(\mathbf{d}) \to \mathcal F_{G_1}^\mathbb{Z}(\mathbf{d_1}) \;\dot\cup\; \mathcal F_{G_2}^\mathbb{Z} (\mathbf{d_2}).\] 

For an integer flow $f \in F_{G_0}^\mathbb{Z}(\mathbf{d})$, let $x = f(i,j),$ and let $y = f(j,k).$ Then we denote by $F_{G_0}^\mathbb{Z}(\mathbf{d};y\leq x)$ the subset of $F_{G_0}^\mathbb{Z}(\mathbf{d})$ where $y \leq x,$ and likewise let $F_{G_0}^\mathbb{Z}(\mathbf{d}; y>x)$ denote the the subset of $F_{G_0}^\mathbb{Z}(\mathbf{d})$ where $y > x.$ 

We  let $\varphi_1$ be restriction of $\varphi$ to $F_{G_0}^\mathbb{Z}(\mathbf{d};y\leq x)$, and $\varphi_2$ the restriction of $\varphi$ to $F_{G_0}^\mathbb{Z}(\mathbf{d};y > x)$. We now construct $\varphi_1$ and $\varphi_2$ as bijections with disjoint codomains where the union is the codomain of $\varphi$. We define $\varphi_1$ and $\varphi_2$ as illustrated in Figure~\ref{fig:recursion int flows}.

More formally, we define \[\varphi_1 : \mathcal{F}_{G_0}^\mathbb{Z}(\mathbf{d};y\leq x) \to \mathcal{F}_{G_1}^\mathbb{Z}(\mathbf{d_1}),\]
where $f \mapsto f'$ given by 
\begin{equation*}
    f'(e) = \begin{cases}
    x - y, & e = (i,j) \\
    y, & e = (i,k) \\
    f(e), & e \in E(G_1)\setminus\{(i,j),(i,k)\}.
    \end{cases}
\end{equation*}
Since the indegrees in $G_0$ and $G_1$ are the same, we see that the net flow vector is $\mathbf d_1 = \mathbf d$, so the map is well-defined. We now construct the inverse map $\varphi_1^{-1}$ with $f \mapsto f'$ given by
\begin{equation*}
    f'(e) = \begin{cases}
    f(i,j) + f(i,k), & e = (i,j) \\
    f(i,k), & e = (j,k) \\
    f(e), & e \in E(G_0)\setminus\{(i,j),(j,k)\}.
    \end{cases}
\end{equation*}
The net flow vector is again unchanged, so the map is well-defined and therefore $\varphi_1$ is a bijection.

We now construct a second bijection \[\varphi_2 : \mathcal{F}_{G_0}^\mathbb{Z}(\mathbf{d};y > x) \to \mathcal F_{G_2}^\mathbb{Z}(\mathbf{d_2}),\]
with $f \mapsto f'$ given by
\begin{equation*}
    f'(e) = \begin{cases}
    y-x-1, & e = (j,k) \\
    x, & e = (i,k) \\
    f(e), & e \in E(G_2)\setminus\{(j,k),(i,k)\}.
    \end{cases}
\end{equation*}
The only change in indegrees is that $d_j'' = d_j - 1$ and $d_k'' = d_k + 1.$ However, the outgoing flow at vertex $j$ also decreases by 1, whereas the incoming flow at vertex $k$ also decreases by 1, so the net flow vector is indeed $\mathbf d_2$. We similarly construct the inverse map $\varphi_2^{-1}$ with $f \mapsto f'$ given by
\begin{equation*}
    f'(e) = \begin{cases}
    f(i,k), & e = (i,j) \\
    f(j,k) + f(i,k) + 1, & e = (j,k) \\
    f(e), & e \in E(G_0)\setminus\{(i,j),(j,k)\}.
    \end{cases}
\end{equation*}
The only indegrees that change are $d_j = d_j'' + 1$ and $d_k = d_k'' - 1$, but since the outgoing flow at vertex $j$ increases by 1 and the incoming flow at vertex $k$ decreases by 1, we see the graph is locally unchanged. Hence, the map is well-defined and $\varphi_2$ is a bijection as a well. 

\begin{figure}
    \centering
    \includegraphics[scale=0.9]{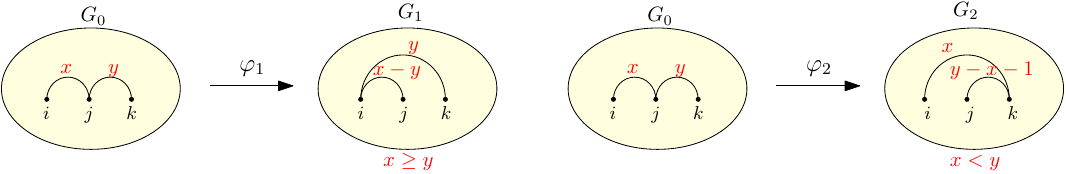}
    \caption{The maps $\varphi_1$ and $\varphi_2$ for integer flows of a subdivided graph.}
    \label{fig:recursion int flows}
\end{figure}

Since $\mathcal F_{G_1}^\mathbb{Z}(\mathbf{d_1})$ and $\mathcal F_{G_2}^\mathbb{Z} (\mathbf{d_2})$ are disjoint, we have that $\varphi$ is a bijection, and the result follows. 
\end{proof}

\begin{proof}[Recursive proof of Theorem~\ref{thm:vol-kpf}]
The proof follows from the base case given in Lemma~\ref{base case algorithm} and Lemma~\ref{base case}, and the inductive step established in Lemma~\ref{inductive step}.
\end{proof}

\section{Symmetry of \texorpdfstring{$M_n(a,b,c)$}{M_n(a,b,c)}} \label{sec: symmetry}

This section is about a fundamental symmetry of $M_n(a,b,c)$, that is invariant under switching $a$ and $b$. We give three proofs of this result with the three perspectives for $M_n(a,b,c)$ illustrated in Theorem~\ref{thm:ckm}: as a product formula, as the volume of a flow polytope, and as the number of certain integer flows. The last proof is bijective.

\begin{corollary}[Symmetry of $M_n$] \label{M symmetry}
For a positive integers $n, a$ and $b$ and nonnegative integers $c$ we have that $M_n(a,b,c)=M_n(b,a,c)$.
\end{corollary}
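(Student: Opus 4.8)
The plan is to give the three proofs promised in the surrounding text, one for each incarnation of $M_n(a,b,c)$ appearing in Theorem~\ref{thm:ckm}, and I would order them from easiest to hardest.

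\textbf{Proof via the product formula.} First I would verify the symmetry directly from \eqref{eq:morris}. Writing
\[
M_n(a,b,c) = \prod_{j=0}^{n-1} \frac{\Gamma\!\left(a-1+b+(n-1+j)\tfrac{c}{2}\right)\Gamma\!\left(\tfrac{c}{2}+1\right)}{\Gamma\!\left(a+j\tfrac{c}{2}\right)\Gamma\!\left(b+j\tfrac{c}{2}\right)\Gamma\!\left(\tfrac{c}{2}(j+1)+1\right)},
\]
I observe that swapping $a\leftrightarrow b$ fixes the numerator factor $\Gamma(a-1+b+(n-1+j)\tfrac{c}{2})$ entirely, and in the denominator merely interchanges the two factors $\Gamma(a+j\tfrac{c}{2})$ and $\Gamma(b+j\tfrac{c}{2})$; the remaining factors $\Gamma(\tfrac{c}{2}+1)$ and $\Gamma(\tfrac{c}{2}(j+1)+1)$ are independent of $a,b$. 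Since each $j$-factor is individually invariant, the product is invariant, so $M_n(a,b,c)=M_n(b,a,c)$. This is immediate and requires only inspection.

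\textbf{Proof via volumes.} Next I would use the geometric interpretation $\vol\mathcal F_{k_{n+2}^{a,b,c}}(1,0,\dots,0,-1)=M_n(a,b,c)$ from Theorem~\ref{thm:ckm}. The key observation is that the graph $k_{n+2}^{a,b,c}$ has its source edges (multiplicity $a$) and sink edges (multiplicity $b$) interchanged under the reversal operation $G\mapsto G^r$ defined before Lemma~\ref{lem: reverse flow polytopes}: reversing edges and relabeling $i\mapsto n+1-i$ sends the $a$ edges $(0,i)$ to $b$-type positions and the $b$ edges $(i,n+1)$ to $a$-type positions, while the interior multiplicity-$c$ edges $(i,j)$ map to interior edges. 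Thus $(k_{n+2}^{a,b,c})^r = k_{n+2}^{b,a,c}$, and Lemma~\ref{lem: reverse flow polytopes} gives $\mathcal F_{k_{n+2}^{a,b,c}}\equiv\mathcal F_{k_{n+2}^{b,a,c}}$. Since integrally equivalent polytopes share the same volume, $M_n(a,b,c)=M_n(b,a,c)$.

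\textbf{Proof via lattice points (bijective).} Finally, the hardest and most substantive proof uses the Kostant partition function interpretation together with the DKK triangulation and the $\mathrm{MMS}$ correspondence advertised in the introduction. Here I would construct an explicit bijection between $\mathcal F^{\mathbb Z}_{k_{n+2}^{a,b,c}}(0,a_1,\dots,a_n,-\sum a_i)$ and $\mathcal F^{\mathbb Z}_{k_{n+2}^{b,a,c}}(0,b_1,\dots,b_n,-\sum b_i)$, where $a_i=a-1+c(i-1)$ and $b_i=b-1+c(i-1)$. The expected main obstacle is that the two net-flow vectors genuinely differ, so no naive edge-reversal works at the level of lattice points: I would instead pass through the triangulation, matching simplices on each side via a combinatorial labeling of routes, and then transport lattice points cell by cell using the bijection of M\'esz\'aros--Morales--Striker. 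I anticipate this bijective argument is developed in full in Section~\ref{sec: symmetry} proper; in this statement I would record that all three routes yield the same conclusion, and defer the detailed construction of the lattice-point bijection — the only genuinely difficult step — to the body of that section.
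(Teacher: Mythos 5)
Your proposal matches the paper's treatment essentially verbatim: the same three proofs in the same roles (inspection of the product formula; graph reversal $(k_{n+2}^{a,b,c})^r = k_{n+2}^{b,a,c}$ combined with Lemma~\ref{lem: reverse flow polytopes} for volumes; and the DKK/MMS machinery for lattice points), with the hard bijective step deferred exactly as the paper defers it to Lemma~\ref{lem: reverse lattice points} and its proof later in Section~\ref{sec: symmetry}. One small clarification for when you write up the third proof: the bijection is not a ``cell by cell'' transport of lattice points between simplices, but the composition $\Theta_{G,\preceq} = \Omega_{G^r,\preceq^r} \circ r \circ \Omega^{-1}_{G,\preceq}$, which sends an integer flow to a maximal clique of routes, reverses the routes, and maps the resulting clique back to an integer flow of the reversed graph.
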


\subsection{Three proofs of Corollary~\ref{M symmetry}}

\begin{proof}[First proof]
This can directly be seen from symmetry of $a$ and $b$ in the product formula \eqref{morris} of $M_n(a,b,c)$.
\end{proof}

The second proof is based on Lemma~\ref{lem: reverse flow polytopes}. 

\begin{proof}
The result is a Corollary of \cite[Prop. 2.3]{MM} for netflow ${\bf a} = (1,0,\ldots,0,-1)$.
\end{proof}

\begin{proof}[Second proof of Corollary \ref{M symmetry}]
Since the reverse of the graph $k_{n+2}^{a,b,c}$ is the graph $k_{n+2}^{b,a,c}$ then by Lemma~\ref{lem: reverse flow polytopes} we have that   
$\mathcal{F}_{k_{n+2}^{a,b,c}} \equiv  \mathcal{F}_{k_{n+2}^{b,a,c}}$. The volume of the two polytopes must be equal and the result follows by \eqref{eq: Mn is kpf and volume}.
\end{proof}

The third proof is based on the following lemma from \cite{MM}.  The lemma was originally proved geometrically by combining Lemma~\ref{lem: reverse flow polytopes} with Theorem~\ref{thm:vol-kpf}. We give here a bijective proof of this result.

\begin{lemma}[{\cite[Cor. 1.4]{MM}}] \label{lem: reverse lattice points}
For a loopless digraph $G$ with vertices $\{0,1,\ldots,n+1\}$ having a unique source $0$ and unique sink $n+1$ we have
\[
K_G(0,d_1,\ldots,d_n,-\sum_{i=1}^n d_i) = K_{G^r}(0,d^r_1,\ldots,d^r_n,-\sum_{i=1}^n d^r_i),
\]
where $d_i = \indeg_G(i)-1$ and $d^r_i=\indeg_{G^r}(i)-1$.
\end{lemma}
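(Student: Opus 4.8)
The plan is to prove the identity by a direct bijection that exploits the recursive structure behind Theorem~\ref{thm:vol-kpf} and the reduction tree $\mathcal{T}(G)$, rather than passing through volumes. The inductive step (Lemma~\ref{inductive step}) supplies, for each reducible node $G_0$ with children $G_1,G_2$, an explicit bijection $\varphi\colon \mathcal{F}^{\mathbb Z}_{G_0}(\mathbf d)\to \mathcal{F}^{\mathbb Z}_{G_1}(\mathbf{d_1})\sqcup \mathcal{F}^{\mathbb Z}_{G_2}(\mathbf{d_2})$, where each net flow vector is the ``indegree-minus-one'' vector of the corresponding graph. Composing these bijections down the tree and invoking the base case (Lemma~\ref{base case}), which states that every irreducible leaf $L$ carries a \emph{unique} integer flow with net flow $\mathbf d(L)$, I obtain a canonical bijection between $\mathcal{F}^{\mathbb Z}_G(0,d_1,\dots,d_n,-\sum_i d_i)$ and the set of leaves of $\mathcal{T}(G)$; in particular $K_G(\mathbf d)$ equals the number of leaves of $\mathcal{T}(G)$, exactly as in the recursive proof of Theorem~\ref{thm:vol-kpf}.

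The heart of the argument is then to match $\mathcal T(G)$ with $\mathcal T(G^r)$. First I would record how reversal acts on the reduction rule: a reducible pair $(i,j),(j,k)$ with $i<j<k$ in $G_0$ corresponds under $i\mapsto n+1-i$ to the pair $(n+1-k,n+1-j),(n+1-j,n+1-i)$ in $G_0^r$, which is again reducible through the vertex $n+1-j$. A direct check against \eqref{reduction1} and \eqref{reduction2} shows that reversal intertwines the two reductions while swapping the two children, namely $G_1^r=(G_0^r)_2$ and $G_2^r=(G_0^r)_1$. Consequently, fixing any reduction order on $G$ and applying the reflected reductions to $G^r$ produces reduction trees that are isomorphic up to exchanging left and right children, so $e\mapsto e^r$ restricts to a bijection between their leaves. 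Since $L$ is irreducible iff every internal vertex $j$ satisfies $\indeg_L(j)=0$ or $\outdeg_L(j)=0$, and since $\indeg_{L^r}(j)=\outdeg_L(n+1-j)$ and $\outdeg_{L^r}(j)=\indeg_L(n+1-j)$, the map $L\mapsto L^r$ indeed sends leaves to leaves.

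Putting these pieces together, I would compose three bijections: lattice points of $\mathcal{F}^{\mathbb Z}_G(\mathbf d)$ onto leaves of $\mathcal T(G)$, then leaves of $\mathcal T(G)$ onto leaves of $\mathcal T(G^r)$ via $L\mapsto L^r$, and finally leaves of $\mathcal T(G^r)$ onto lattice points of $\mathcal{F}^{\mathbb Z}_{G^r}(\mathbf d^r)$ (the inverse of the leaf correspondence applied to $G^r$). Because $\indeg_{G^r}(i)=\outdeg_G(n+1-i)$, the resulting target net flow is precisely $(0,d_1^r,\dots,d_n^r,-\sum_i d_i^r)$ with $d_i^r=\indeg_{G^r}(i)-1$, so the composite is an explicit bijection witnessing $K_G(\mathbf d)=K_{G^r}(\mathbf d^r)$.

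I expect the main obstacle to be the verification that reversal commutes with the reduction rule while swapping the two children; this is a careful but routine edge-by-edge computation from \eqref{reduction1}--\eqref{reduction2}, and it is exactly what makes the two reduction trees isomorphic. A secondary point requiring care is that reduction trees are not unique, which I would handle by fixing one reduction order on $G$ and using the reflected order on $G^r$ throughout, so that the leaf bijection is well defined. An alternative, more geometric route, matching lattice points through the Danilov--Karzanov--Koshevoy triangulation together with the M\'esz\'aros--Morales--Striker correspondence, would also produce a bijection specializing to the symmetry of $M_n(a,b,c)$; the reduction-tree argument above, however, has the advantage of relying only on the bijections already constructed in Section~\ref{sec:vol proof}.
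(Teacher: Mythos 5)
Your reduction-tree strategy contains a genuine gap at its very first step, and the gap is fatal to the composite bijection. You attribute to Lemma~\ref{base case} the statement that ``every irreducible leaf $L$ of $\mathcal{T}(G)$ carries a unique integer flow with net flow $\mathbf{d}(L)$,'' but Lemma~\ref{base case} concerns graphs whose internal vertices all have outdegree one, which is not the same as irreducibility, and the statement itself is false. Take $G=k_3$ with edges $(0,1),(1,2),(0,2)$, so $K_G(0,0,0)=1$. Its reduction tree has two leaves, $G_1=\{(0,1),(0,2),(0,2)\}$ and $G_2=\{(1,2),(0,2),(0,2)\}$. For $G_2$ the prescribed net flow is $\mathbf{d}(G_2)=(0,-1,1)$; since vertex $1$ has no incoming edge in $G_2$, its net flow under the paper's convention $M_G\mathbf{f}=\mathbf{a}$ (columns $e_i-e_j$, so net flow $=$ outflow $-$ inflow) is nonnegative for every nonnegative flow, hence $K_{G_2}(\mathbf{d}(G_2))=0$. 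So $K_G(\mathbf{d})=1\neq 2=\#\{\text{leaves of }\mathcal{T}(G)\}$. What iterating Lemma~\ref{inductive step} actually gives is $K_G(\mathbf{d})=\sum_L K_L(\mathbf{d}(L))$ with each $K_L\in\{0,1\}$: only the leaves all of whose internal vertices have indegree $1$ and outdegree $0$ support a (unique) flow, and $K_G(\mathbf{d})$ counts exactly those.

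This defect propagates and cannot be patched by simply restricting to flow-supporting leaves, because your (correct) tree isomorphism $L\mapsto L^r$, with $G_1^r=(G_0^r)_2$ and $G_2^r=(G_0^r)_1$, does \emph{not} preserve the flow-supporting property: if every internal vertex of $L$ has indegree $1$ and outdegree $0$, then every internal vertex of $L^r$ has indegree $0$ and outdegree $1$, so $K_L=1$ while $K_{L^r}=0$. In the $k_3$ example the flow-supporting leaf of $\mathcal{T}(G)$ reverses to the non-flow-supporting leaf of $\mathcal{T}(G^r)$ and vice versa, so your three-step composite has nothing to map to at the last stage. The statement you would actually need, that the number of flow-supporting leaves of $\mathcal{T}(G)$ equals that of $\mathcal{T}(G^r)$, is precisely the lemma being proved, so the argument becomes circular exactly at the crucial point. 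A genuine repair along your lines exists but needs ingredients you do not supply: run the \emph{pruned} reduction of Lemma~\ref{base case algorithm} on $G$ (its discarded branches have $K=0$, its leaves are outdegree-one graphs with $K=1$ by Lemma~\ref{base case}), reverse that pruned tree to get a mirror process on $G^r$, and then prove the two mirror facts that the branches discarded there also have $K=0$ and that its leaves, namely graphs whose internal vertices all have indegree one, each carry exactly one integer flow. For comparison, the paper sidesteps reduction trees entirely and proves the lemma by composing the M\'esz\'aros--Morales--Striker bijection between integer flows and maximal DKK cliques with route reversal, $\Theta_{G,\preceq}=\Omega_{G^r,\preceq^r}\circ r\circ \Omega^{-1}_{G,\preceq}$, which is the route you only mention as an alternative in your closing remark.
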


\begin{remark}
Lemma~\ref{lem: reverse lattice points} states that two flow polytopes have the same number of lattice points. These polytopes can have different dimensions because of the different vertices with zero netflow. That is, if $G|_{[n+1]}$ is the subgraph of $G$ restricted to vertices $[n+1]$ then the  equivalent polytopes $\mathcal{F}_{G|_{[n+1]}}(d_1,\ldots,d_n,-\sum_i d_i)$ and $\mathcal{F}_{G^r|_{[n+1]}}(d^r_1,\ldots,d^r_n,-\sum_i d^r_i)$ have dimensions $E(G|_{[n+1]})-n-2$ and $E(G^r|_{[n+1]})-n-2$, respectively.
\end{remark}

\begin{proof}[Third proof of Corollary \ref{M symmetry}]
Since the reverse of the graph $k_{n+2}^{a,b,c}$ is the graph $k_{n+2}^{b,a,c}$ then by Lemma~\ref{lem: reverse lattice points} we have that 
\[
K_{k_{n+2}^{a,b,c}}(0,t_1,\ldots,t_n,-\sum_{i=1}^n t_i) \,=\, K_{k_{n+2}^{b,a,c}}(0,s_1,\ldots,s_n,-\sum_{i=1}^n s_i),
\]
where $t_i=a-1+c(i-1)$ and  $s_i=b-1+c(i-1)$ for $i=1,\ldots,n$. The result then follows by Theorem~\ref{thm:ckm}.
\end{proof}

The rest of the section is devoted to the bijective proof of Lemma~\ref{lem: reverse lattice points} using a bijection from \cite[Section 7]{MMS} and inspired by a bijection between lattice points of generalized permutahedra (see \cite[Thm. 12.9]{Pos} and \cite{GNP}). 

\subsection{Bijection between lattice points of \texorpdfstring{$\mathcal{F}_G(0,d_1,\ldots)$}{F_G(0,d1,...)} and \texorpdfstring{$\mathcal{F}_{G^r}(0,d^r_1,\ldots)$}{F_(G^r)(0,d^r_1,...)}}

First, we describe the Danilov, Karzanov, Koshevoy   (DKK) triangulations of $\mathcal{F}_G$ from \cite{DKK}. Recall that the vertices of $\mathcal{F}_G$ are given by unit flows along routes on $G$, i.e. directed paths in $G$ from the source $0$ to the sink $n+1$. Given a route $R$ with vertex $v$, $Rv$ and $vR$ denote the subpaths ending and starting at $v$, respectively. A \textit{framing} at an inner vertex $v$ is a pair ($\preceq_{in(i)}$, $\preceq_{out(i)})$ of linear orderings on the set of incoming edges $in(v)$ to $v$ and on the set of outgoing edges $out(v)$ from $v$. A \textit{framed graph} $(G,\preceq)$ is a graph $G$ with a framing $\preceq$ at each internal vertex.

Fix a framed graph $(G,\preceq)$. For an internal vertex $i$, let $In(i)$ and $Out(i)$ be the sets of maximal paths ending and starting at $i$, respectively. Given a framed graph, we define an order $\preceq_{In(i)}$ on $In(i)$ as follows. Given distinct paths $R,Q$ in $In(i)$, let $j\leq i$ be the smallest vertex after which $Ri$ and $Qi$ coincide and let $e_R$ be the edge of $R$ entering $i$ and $e_Q$ be the edge of $Q$ entering $i$. We have $R \preceq_{In(i)} Q$ if and only if $e_R \preceq_{in(i)} e_Q$. We define an order $\preceq_{Out(i)}$ on $Out(i)$ analogously. We say that routes $R$ and $Q$ with a common inner vertex $i$ are \textit{coherent at $i$} whenever $Ri \preceq_{In(i)} Q_i$ if and only if $iR \preceq_{Out(i)} iQ$. That is, if the paths $iR$ and $iQ$ are ordered the same as $Ri$ and $Qi$. Routes $R$ and $Q$ are \textit{coherent} if they are coherent at each internal vertex they have in common. A set of pairwise coherent routes is called a \textit{clique}. Let $\mathcal{C}^{\max}(G,\preceq)$ be the set of maximal cliques of $(G,\preceq)$. For a maximal clique $C$, let $\Delta_C$ be the convex hull of the vertices of $\mathcal{F}_G$ corresponding to the routes in $C$.

In \cite[Thm. 1. \& 2]{DKK} show that given a framed graph $(G,\preceq)$, the set
$\{\Delta_C \mid C \in \mathcal{C}^{\max}(G,\preceq)\}$ is the set of top dimensional simplices in a regular unimodular triangulation of $\mathcal{F}_G$. See Figure~\ref{fig:DKK triangulation}. As a corollary, we obtain another combinatorial object, maximal cliques, whose number gives the volume of $\mathcal{F}_G$.

\begin{corollary}
For a framed graph $(G,\preceq)$ where $G$ is a loopless digraph with vertices $\{0,1,\ldots,n+1\}$ having a unique source $0$ and sink $n+1$ then $K_G(0,d_1,\ldots,d_n,-\sum_{i=1}^n d_i) = |\mathcal{C}^{\max}(G,\preceq)|$.
\end{corollary}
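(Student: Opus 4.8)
The plan is to combine the two main facts stated just before the corollary. By Theorem~\ref{thm:vol-kpf}, the normalized volume of $\mathcal{F}_G$ equals the Kostant partition function on the left-hand side, that is,
\[
\vol \mathcal{F}_G(1,0,\ldots,0,-1) = K_G\Bigl(0,d_1,\ldots,d_n,-\sum_{i=1}^n d_i\Bigr),
\]
where $d_i = \indeg_G(i)-1$. So it suffices to show that the number of maximal cliques $|\mathcal{C}^{\max}(G,\preceq)|$ also equals $\vol \mathcal{F}_G$.

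First I would invoke the DKK result quoted in the paragraph immediately preceding the statement: for any framed graph $(G,\preceq)$, the collection $\{\Delta_C \mid C \in \mathcal{C}^{\max}(G,\preceq)\}$ is the set of top-dimensional simplices of a \emph{regular unimodular} triangulation of $\mathcal{F}_G$. The key consequence is the word \emph{unimodular}: each top-dimensional simplex $\Delta_C$ has normalized volume exactly $1$. Since a triangulation partitions $\mathcal{F}_G$ into pieces that overlap only on lower-dimensional faces (which contribute nothing to volume), the normalized volume of $\mathcal{F}_G$ is just the number of top-dimensional simplices, namely $|\mathcal{C}^{\max}(G,\preceq)|$. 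This gives $\vol \mathcal{F}_G = |\mathcal{C}^{\max}(G,\preceq)|$.

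Chaining the two equalities yields
\[
K_G\Bigl(0,d_1,\ldots,d_n,-\sum_{i=1}^n d_i\Bigr) = \vol \mathcal{F}_G = |\mathcal{C}^{\max}(G,\preceq)|,
\]
which is exactly the claim. In writing this up I would note that the hypotheses on $G$ (loopless, unique source $0$, unique sink $n+1$) are precisely those required to apply Theorem~\ref{thm:vol-kpf}, and that the DKK triangulation exists for any choice of framing $\preceq$, so the identity holds independently of that choice. The corollary is essentially a one-line combination of the two cited results, so there is no serious obstacle; the only point deserving care is making explicit that a unimodular triangulation has as many top-dimensional simplices as its normalized volume, which is the standard fact that reduces the volume computation to a count of maximal cliques.
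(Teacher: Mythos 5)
Your proof is correct and is exactly the argument the paper intends: it presents this statement as an immediate consequence of the DKK regular unimodular triangulation (each $\Delta_C$ having normalized volume $1$, so $\vol \mathcal{F}_G = |\mathcal{C}^{\max}(G,\preceq)|$) chained with Theorem~\ref{thm:vol-kpf}. No gaps; your explicit remark about unimodularity being the key word is precisely the point the paper leaves implicit.
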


\begin{figure}
    \centering
    \includegraphics[scale=0.7]{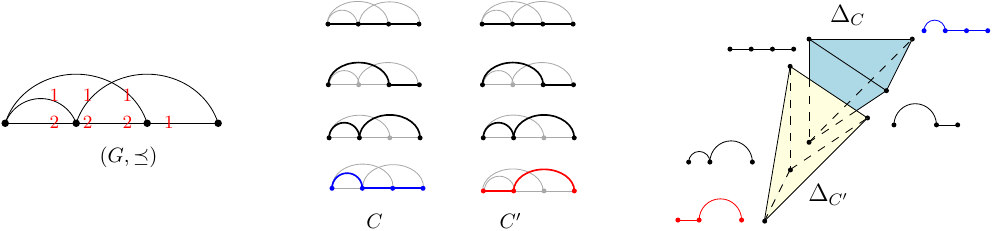}
    \caption{A framed graph $(G,\preceq)$, the two maximal cliques in $\mathcal{C}^{\max}_{G,\preceq}$ and the corresponding DKK triangulation of $\mathcal{F}_G$.}
    \label{fig:DKK triangulation}
\end{figure}

In \cite{MMS} the authors give an explicit bijection between the maximal cliques and the integer flows. The map from the cliques to the flows is as follows.

\begin{definition}[{\cite[Section 7]{MMS}}]
Given a framed graph $(G,\preceq)$ and ${\bf d}=(0,d_1,\ldots,d_n,-\sum_i d_i)$, let $\Omega_{G,\preceq}: \mathcal{C}^{\max}(G,\preceq) \to \mathcal{F}_G^{\mathbb{Z}}({\bf d})$ be defined as follows $\Omega^{(G,\preceq)}: C \mapsto f$ where $f(e)=n(e)-1$ where $n(i,j)$ is the number of times edge $(i,j)$ appears in the set of prefixes $\{Rj \mid R \in C\}$.
\end{definition}

\begin{lemma}[{\cite[Lemma 7.9]{MMS}}] \label{bij: clique to flow}
Given a framed graph $(G,\preceq)$, the map $\Omega_{G,\preceq}$ is a bijection between maximal cliques $\mathcal{C}^{\max}(G,\preceq)$ and integer flows in $\mathcal{F}^{\mathbb{Z}}(0,d_1,\ldots,d_n,-\sum_i d_i)$.
\end{lemma}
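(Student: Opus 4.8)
The plan is to prove that $\Omega_{G,\preceq}$ is a bijection in three movements: first verify that $\Omega_{G,\preceq}$ is well defined, i.e. that its image is a nonnegative integer flow with the prescribed net flow vector; then invoke the preceding Corollary to reduce bijectivity to injectivity; and finally establish injectivity by reconstructing the clique from the flow. Throughout, I read $n(i,j)$ as the number of \emph{distinct} prefixes ending at $j$ with final edge $(i,j)$, i.e. the number of distinct initial segments $Rj$, $R \in C$, entering $j$ through $(i,j)$. Writing $P_v := \sum_i n(i,v)$ for the number of distinct prefixes ending at $v$ (with $P_0 = 1$), the value $n(v,k)$ then counts those prefixes that are continued by the out-edge $(v,k)$ in some route of $C$.

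The decisive step is a structural lemma about coherent cliques. At each internal vertex $v$, form the bipartite \emph{transition graph} whose left vertices are the $P_v$ distinct prefixes ending at $v$, whose right vertices are the $\outdeg_G(v)$ out-edges at $v$, and whose edges record which prefix is continued by which out-edge among the routes of $C$. I claim coherence and maximality force this transition graph to be a spanning tree: every in-edge and every out-edge is used, so $n(e)\ge 1$ for all $e$ and hence $f=\Omega(C)\ge 0$, and the number of realized transitions is exactly $P_v + \outdeg_G(v) - 1$. The tree shape is where coherence enters: the framing linearly orders both the prefixes (via $\preceq_{In(v)}$) and the out-edges (via $\preceq_{out(v)}$), and coherence at $v$ forces these orders to agree along the routes, making the transitions a non-crossing, connected, acyclic staircase. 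Granting this, the net flow at $v$ follows from a direct count: the in-flow is $\sum_i (n(i,v)-1) = P_v - \indeg_G(v)$, the out-flow is $\sum_k (n(v,k)-1) = (P_v + \outdeg_G(v) - 1) - \outdeg_G(v) = P_v - 1$, so the net out-flow is $(P_v - 1) - (P_v - \indeg_G(v)) = \indeg_G(v) - 1 = d_v$, as required. The same computation at the source gives net flow $0$, and conservation then handles the sink, so $\Omega_{G,\preceq}$ is well defined.

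With well-definedness in hand, bijectivity follows once $\Omega_{G,\preceq}$ is injective: the preceding Corollary gives $|\mathcal{C}^{\max}(G,\preceq)| = K_G(0,d_1,\ldots,d_n,-\sum_i d_i)$ (via the DKK triangulation together with Theorem~\ref{thm:vol-kpf}), so the two finite sets have equal cardinality and any injection between them is automatically a bijection. To prove injectivity I would reconstruct $C$ from $f$ by working outward from the source: the multiplicities $n(i,j) = f(i,j)+1$ record how many distinct prefixes pass through each edge, and the tree structure at each vertex makes the branching of prefixes into out-edges uniquely determined by these multiplicities together with the framing orders. Reading off the prefixes canonically recovers the routes of $C$, so distinct cliques cannot share an image. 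Equivalently, one could argue surjectivity by using the framing to build, for any integer flow $f$, a coherent family of routes realizing the multiplicities $f(e)+1$; either direction closes the argument.

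The main obstacle is the structural lemma of the second paragraph: that coherence plus maximality makes the transition graph at each vertex a spanning tree. Everything else—the net-flow bookkeeping, the cardinality reduction, and the vertex-by-vertex reconstruction—is routine once this nested, non-crossing structure of coherent cliques is pinned down. I expect the bulk of the effort to go into proving that a maximal clique uses every edge and that, at each vertex, the realized (prefix, out-edge) incidences are connected and acyclic; this is exactly the combinatorial content that the DKK framing is designed to supply.
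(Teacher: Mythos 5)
Your architecture is sound, and several pieces are right as stated: the net-flow bookkeeping is correct (with net flow read as outflow minus inflow, your count gives $(P_v-1)-(P_v-\indeg_G(v))=d_v$ at internal vertices and $0$ at the source); the reduction of bijectivity to injectivity via the identity $|\mathcal{C}^{\max}(G,\preceq)|=K_G(0,d_1,\ldots,d_n,-\sum_i d_i)$ is legitimate and non-circular, since that corollary rests only on the cited DKK triangulation theorem together with Theorem~\ref{thm:vol-kpf}, not on the lemma being proved; and the vertex-by-vertex reconstruction does yield injectivity once one knows the transitions at each vertex form a \emph{non-crossing} spanning tree, because a non-crossing spanning tree on ordered parts is the unique ``staircase'' determined by its degree sequence $n(v,k)=f(v,k)+1$. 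But there is a genuine gap, and you flag it yourself: the structural lemma --- that in a maximal clique every edge of $G$ is used and the realized (prefix, out-edge) incidences at each internal vertex form a connected, acyclic, non-crossing bipartite graph --- is asserted (``I claim\ldots'', ``Granting this\ldots'') and never proved. That lemma is not a routine verification; it is essentially the entire mathematical content of the statement, and every other step of your write-up is conditional on it. For comparison, the paper itself does not prove this lemma either: it cites \cite[Lemma 7.9]{MMS}, where the inverse map $\Lambda_{G,\preceq}$ is constructed explicitly, so there is no in-paper argument your outline could be matching.

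To see the anatomy of what is missing: the two halves of your structural claim have very different difficulty. Acyclicity (equivalently non-crossingness) does follow quickly from coherence alone --- a cycle in the transition graph at $v$ forces two prefixes $Q\prec_{In(v)}Q'$ and two out-edges $e\prec_{out(v)}e'$ with transitions $(Q,e')$ and $(Q',e)$ both realized, and the two realizing routes are then incoherent at $v$. The hard half is connectivity together with fullness (every edge used), and both genuinely need maximality. The naive repair --- if the transition graph at $v$ is disconnected, glue a prefix from one component to the out-path of a route in another component to produce a new route --- founders on the fact that coherence of the glued route must be checked at \emph{every} vertex it shares with \emph{every} route of $C$, not just at $v$; making that work is exactly where \cite{DKK} and \cite{MMS} do real work. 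Two concrete ways to close the gap: (a) invoke the DKK theorem that maximal cliques span top-dimensional simplices of a unimodular triangulation, hence $|C|=m-n$; then an Euler-characteristic count (summing, over non-sink vertices, transitions $=$ prefixes $+$ used out-edges $-$ components) gives $|C|=1+m'-\sum_v c_v$ with $m'$ the number of used edges, and forcing this to equal $m-n$ pins down $m'=m$ and $c_v=1$ for all $v$; or (b) reproduce the explicit inverse construction $\Lambda_{G,\preceq}$ of \cite[Section 7]{MMS}, which proves surjectivity directly and makes the counting step unnecessary. Without one of these, the proposal is a correct plan but not a proof.
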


The inverse map $\Omega^{-1}_{G,\preceq}$ can be found in \cite[Section 7]{MMS}) (denoted $\Lambda_{G,\preceq}$). 

A framing $(G,\preceq)$ induces the following framing on the reverse graph $G^r$: for an internal vertex $i$, let $\preceq^r_{in(i)} :=\, \preceq_{out(i)}$ and $\preceq^r_{out(i)} :=\, \preceq_{in(i)}$. We denote this induced framing by $(G^r,\preceq^r)$.

\begin{remark} \label{rmk: reversing cliques}
Note that the framings $(G,\preceq)$ and $(G^r,\preceq^r)$ induce the same triangulation of $\mathcal{F}_G \equiv \mathcal{F}_{G^r}$. In other words up to reversing the routes, the cliques of $(G,\preceq)$ and $(G^r,\preceq^r)$ are the same. Thus $r:C \mapsto \{ Q^r \mid Q \in C\}$ is a bijection between $\mathcal{C}^{\max}(G,\preceq)$ and $\mathcal{C}^{\max}(G^r,\preceq^r)$.
\end{remark}

We are now ready to define the  map that will give the desired bijection from the integer flows in $G$ and the integer flows in $G^r$.

\begin{definition} \label{def: master bijection}
Given a framed graph $(G,\preceq)$, let $\Theta_{G,\preceq} := \Omega_{G^r,\preceq^r} \circ r\circ \Omega^{-1}_{G,\preceq}$ which is a map from $\mathcal{F}_G^{\mathbb{Z}}(0,d_1,\ldots,d_n,-\sum_i d_i)$ to $\mathcal{F}_{G^r}^{\mathbb{Z}}(0,d^r_1,\ldots,d^r_n,-\sum_i d^r_i)$ . See Figure~\ref{fig:bijection flows} for example.
\end{definition}

\begin{lemma} \label{lemma: the bijection}
Given a framed graph $(G,\preceq)$, the map $\Theta_{G,\preceq}$ is a bijection between $\mathcal{F}_G^{\mathbb{Z}}(0,d_1,\ldots,d_n,-\sum_i d_i)$ to $\mathcal{F}_{G^r}^{\mathbb{Z}}(0,d^r_1,\ldots,d^r_n,-\sum_i d^r_i)$ .
\end{lemma}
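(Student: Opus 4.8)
The plan is to prove that $\Theta_{G,\preceq}$ is a bijection by exhibiting it as a composition of three bijections, so that the desired statement follows immediately from the general principle that a composition of bijections is a bijection. Recall from Definition~\ref{def: master bijection} that $\Theta_{G,\preceq} = \Omega_{G^r,\preceq^r} \circ r \circ \Omega^{-1}_{G,\preceq}$. First I would observe that $\Omega^{-1}_{G,\preceq}$ is a bijection from $\mathcal{F}_G^{\mathbb{Z}}(0,d_1,\ldots,d_n,-\sum_i d_i)$ onto $\mathcal{C}^{\max}(G,\preceq)$; this is exactly the inverse direction of Lemma~\ref{bij: clique to flow}, which guarantees that $\Omega_{G,\preceq}$ is a bijection between maximal cliques and integer flows, hence so is its inverse.

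Next I would invoke Remark~\ref{rmk: reversing cliques}, which asserts that the route-reversal map $r \colon C \mapsto \{Q^r \mid Q \in C\}$ is a bijection between $\mathcal{C}^{\max}(G,\preceq)$ and $\mathcal{C}^{\max}(G^r,\preceq^r)$. The justification there is that the framings $(G,\preceq)$ and $(G^r,\preceq^r)$ induce the same triangulation of $\mathcal{F}_G \equiv \mathcal{F}_{G^r}$, so up to reversing routes the two sets of cliques coincide. Finally, applying Lemma~\ref{bij: clique to flow} to the framed graph $(G^r,\preceq^r)$ shows that $\Omega_{G^r,\preceq^r}$ is a bijection from $\mathcal{C}^{\max}(G^r,\preceq^r)$ onto $\mathcal{F}_{G^r}^{\mathbb{Z}}(0,d^r_1,\ldots,d^r_n,-\sum_i d^r_i)$, the target set. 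Chaining these together, $\Theta_{G,\preceq}$ is a bijection between the two lattice-point sets, as claimed.

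The only point requiring genuine care, rather than routine citation, is that the codomains and domains match up correctly so that the composition is actually defined on the stated sets. In particular I would verify that the net flow vector on $G^r$ produced by $\Omega_{G^r,\preceq^r}$ is indeed $(0,d^r_1,\ldots,d^r_n,-\sum_i d^r_i)$ with $d^r_i = \indeg_{G^r}(i)-1$, which is guaranteed by applying Lemma~\ref{bij: clique to flow} verbatim to $G^r$ in place of $G$. There is no substantive obstacle here: all three constituent maps are already established to be bijections in the cited results, so the main content of this lemma is really the bookkeeping assembled in Definition~\ref{def: master bijection} and Remark~\ref{rmk: reversing cliques}. Consequently the proof is short, and the genuinely interesting work lies in the earlier verification that $r$ respects the shared triangulation (Remark~\ref{rmk: reversing cliques}) and in the correspondence $\Omega$ of \cite{MMS}.
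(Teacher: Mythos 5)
Your proposal is correct and matches the paper's own proof: both establish the result by composing the three bijections $\Omega^{-1}_{G,\preceq}$, $r$, and $\Omega_{G^r,\preceq^r}$, citing Lemma~\ref{bij: clique to flow} for the two $\Omega$ maps and Remark~\ref{rmk: reversing cliques} for the route-reversal step. Your extra check that the target net flow vector is $(0,d^r_1,\ldots,d^r_n,-\sum_i d^r_i)$ is sound bookkeeping that the paper leaves implicit.
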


\begin{proof}
By Lemma~\ref{bij: clique to flow} $\Omega^{-1}_{G,\preceq}$ is a bijection between $\mathcal{F}^{\mathbb{Z}}(0,d_1,\ldots,d_n,-\sum_i d_i)$ and $\mathcal{C}^{\max}(G,\preceq)$. Next, by reversing the routes in the cliques, we identify $C$ in $\mathcal{C}^{\max}(G,\preceq)$ with $r(C)$ in  $\mathcal{C}^{\max}(G^r,\preceq^r)$. Finally, by Lemma~\ref{bij: clique to flow} the latter is in bijection with $\mathcal{F}_{G^r}^{\mathbb{Z}}(0,d^r_1,\ldots,d^r_n,-\sum_i d^r_i)$ via $\Omega_{G^r,\preceq^r}$.
\end{proof}

Finally, the correspondence $\Theta_{G,\preceq}$ gives the bijective proof of Lemma~\ref{lem: reverse lattice points}.

\begin{proof}[Proof of Lemma \ref{lem: reverse lattice points}]
The result follows by Lemma~\ref{lemma: the bijection}.
\end{proof}

\begin{remark}
The DKK triangulation of a flow polytope $\mathcal{F}_G$ gives the bijection between the integer flows. Indeed, given a clique in $\mathcal{C}^{\max}_{G,\preceq}$, $\Omega_{G,\preceq}:C\mapsto f$ where $f(i,j)=n(i,j)-1$ where $n(i,j)$ is the number of times $(i,j)$ appears in the set of prefixes $\{jR\mid R \in C\}$ and $\Omega_{G^r,\preceq^r}:C \mapsto f'$ where $f'(i,j)=n'(i,j)-1$, where $n'(i,j)$ is the number of times edge $(n+1-j,n+1-i)$ appears in the set of  suffixes $\{(n+1-j)R \mid R \in C\}$. See Figure~\ref{fig:bijection flows}.
\end{remark}

\begin{figure}
    \centering
    \includegraphics[scale=0.7]{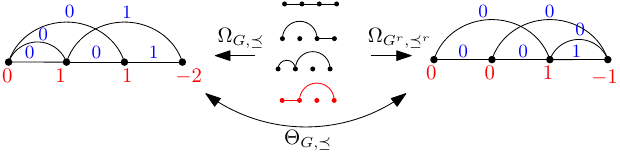} \qquad \includegraphics[scale=0.7]{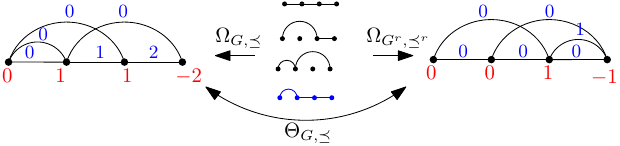}
    \caption{Example of the correspondence $\Theta_{G,\preceq}$  between integer flows in $\mathcal{F}_G(0,d_1,d_2,\ldots)$ and $\mathcal{F}_{G^t}(0,d^r_1,d^r_2,\ldots)$.}
    \label{fig:bijection flows}
\end{figure}

\begin{example} \label{ex: prod simplices}
For positive integers $p$ and $q$, let  $G(p,q)$ be the graph  with vertices $\{0,1,2\}$, $p$ edges $(0,1)$, and $q$ edges $(1,2)$. One readily sees that the flow polytope $\mathcal{F}_{G(p,q)}$ is the product of simplices $\Delta^{p-1}\times \Delta^{q-1}$. By Theorem~\ref{thm:BV} we have that 
\[
\vol \mathcal{F}_{G(p,q)} = K_{G(p,q)}(0,p-1,-p+1) = \#( (p-1)\Delta^{q-1} \cap \mathbb{Z}^q) = \binom{p+q-2}{p-1}. 
\]
Since $G(p,q)^r = G(q,p)$, then for each of the $p!q!$ framings of $G(p,q)$ we have that   $\Theta^{G(p,q),\preceq}$ is a bijection between the lattice points of $(p-1)\Delta^{q-1}$ and $(q-1)\Delta^{p-1}$. By a result of Postnikov \cite[Lemma 12.5]{Pos}, if $C$ is a maximal clique in $\mathcal{C}^{\max}_{G(p,q)}$, then the following subgraph of the complete bipartite graph on vertices $\{1,\ldots,p\}\cup \{\overline{1},\ldots,\overline{q}\}$ is a spanning tree: the subgraph $T$ with edges $(i,\overline{j})$ if $C$ contains the route of $G(p,q)$ with the $i$th edge $(0,1)$ and $j$th edge $(1,2)$ in the framing  (see Figure~\ref{fig:ex prod simplices}). Moreover, the coherent condition on the routes of the clique translate to the {\em compatiblity} condition on spanning trees (see \cite[Section 12]{Pos} and \cite[Section 2]{GNP}), and the bijection $\Theta^{G(p,q),\preceq}$ consists of mapping the right degrees minus one to the left degrees minus one of the spanning tree $T$ \cite[Thm. 12.9]{Pos}. 
\end{example}

\begin{figure}
    \centering
    \includegraphics[scale=0.8]{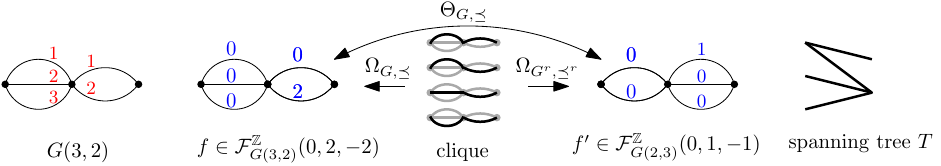}
    \caption{Example of correspondence  $\Theta_{G,\preceq}$ that gives the same bijection between the lattice points of $(p-1)\Delta^{q-1}$ and $(q-1)\Delta^{p-1}$ studied by Postnikov \cite{Pos} in terms of spanning trees.}
    \label{fig:ex prod simplices}
\end{figure}

\section{A new refinement of \texorpdfstring{$M_n(a,b,c)$}{M_n(a,b,c)}} \label{sec: psi}

We define the following constant term in order to refine $M_n(a,b,c)$.

\begin{definition}
Define the following constant term
\begin{equation*}
\Psi_n(k,a,b,c) := \CT_x[t^k]\prod_{i=1}^n (1-x_i)^{-b}x_i^{-a+1}\left(1+t\frac{x_i}{1-x_i}\right) \prod_{1 \leq i < j \leq n} (x_j - x_i)^{-c}.
\end{equation*}
\end{definition}

\subsection{Volume and Kostant partition function interpretations for \texorpdfstring{$\Psi_n(k,a,b,c)$}{Psi_n(k,a,b,c)}} We prove both parts of Theorem \ref{psi kpf}. 
\begin{proof}[Proof of Theorem~\ref{psi kpf}~(i)]
We first prove the Kostant partition interpretation of $\Psi_n(k,a,b,c)$. We specialize the generating function in equation \eqref{eq: kpf gen function kabc} as in the proof of Theorem~\ref{thm:ckm} in Section~\ref{sec: kpf}. Note that compared with $M_n(a,b,c)$, $\Psi_n(k,a,b,c)$ has an extra term $[t^k]\prod_{i=1}^n(1+t\frac{x_i}{1-x_i})$. This term selects $k$ values of $\{1,\ldots,n\}$ and for each selected $i$ multiplies the generating series by $\frac{x_i}{1-x_i}.$

By linearity of constant term extraction,
\begin{equation} \label{eq: CT to coeff extraction}
    \CT_{x_i} \frac{x_i}{1-x_i} f(x_1, \ldots, x_n)  = \sum_{r=1}^{\infty} \CT_{x_i} x_i^r f(x_1, \ldots, x_n) = \sum_{r=1}^{\infty} [x_i^{-r}] f(x_1, \ldots, x_n).
\end{equation}

We then substitute the generating function for $M_n(a,b,c)$ is substituted for $f(x_1,\ldots,x_n)$ in the RHS of \eqref{eq: CT to coeff extraction} and apply the Kostant partition function interpretation of $M_n(a,b,c)$. However, instead of taking the case where the net flow at vertex $i$ is $a-1 + c(i-1),$ we sum the cases where the net flow at $i$ is $a-2 + c(i-1), a-3 + c(i-1), \ldots$ Hence, this is equivalent to strictly decreasing the net flow at vertex $i$ in the Kostant partition function interpretation.  Since we take the coefficient of $t^k,$ there are exactly $k$ vertices with net flow $a_i < a-1 + c(i-1),$ and $n-k$ vertices with net flow $a_i = a-1+c(i-1).$ The result follows.

\end{proof}

\begin{example} The number $\Psi_2(k,a,b,c)$ for $k=0,1,2$ counts the following sums of Kostant partition functions:
\begin{align*}
\Psi_2(0,a,b,c) &= K_{k_4^{a,b,c}}(0,a-1,a+c-1)\\
\Psi_2(1,a,b,c) &=  \sum_{t_2<a+c-1} K_{k_4^{a,b,c}}(0,a-1,t_2,-a-t_2+1) +\sum_{t_1<a-1}  K_{k_4^{a,b,c}}(0,t_1,a+c-1,-t_1-a-c+1) \\
\Psi_2(2,a,b,c) &= \sum_{t_1<a-1,\;t_2<a+c-1} K_{k_4^{a,b,c}}(0,t_1,t_2,-t_1-t_2).\\
\end{align*}
\end{example}

We now prove the volume interpretation. To do so, we first define a modification of $k_{n+2}^{a,b,c}.$

\begin{definition}
For a set $S\subseteq [n]$, let $k^{a,b,c}_{n+2}(S)$ be the graph obtained from $k^{a,b,c}_{n+2}$ by adding $n$ edges $(0,n+1)$ and for each $i\in S$ we delete one of the $a$ incoming edges $(0,i)$ and add an outgoing edge $(i,n+1)$.
\end{definition}

For a set $S \subseteq [n],$ define also the set $T(S)$ as the set of vectors $\mathbf{a}=(0,a_1,a_2, \ldots, a_n, -\sum_{j=1}^n a_j)$ with $a_i < a-1 + c(i-1)$ for $i \in S$ and $a_i = a-1 + c(i-1)$ for $i \notin S.$

\begin{proof}[Proof of Theorem~\ref{psi kpf}~(ii)]
First we show that \begin{equation} \label{T(S) proof}
    \sum_{\mathbf{a} \in T(S)} K_{k_{n+2}^{a,b,c}}(\mathbf{a}) = \vol \mathcal{F}_{k^{a,b,c}_{n+2}(S)}.
\end{equation}

Consider the Kostant partition functions on the left-hand side. For each vertex $i \in S,$ we remove an incoming edge $(0,i)$ (decreasing $a$ by 1) to create a weak inequality instead of a strict equality. We then add an outgoing edge $(i,n+1)$ to carry the necessary flow to force the equality $a_i = \indeg(i)-1$. This process is shown in Figure~\ref{fig:adding edges}.

We note that if we were to add the edge $(0,n+1)$ $n$ times, the volume would not change. This is due to Theorem~\ref{thm:vol-kpf}, which gives the volume as a Kostant partition function where the source has zero net flow. Since an edge $(0,n+1)$ also would not affect the indegree of any internal vertex, it has no effect on the Kostant partition function or volume. By adding the edge $(0,n+1)$ $n$ times, the graph becomes $k{^{a,b,c}_{n+2}(S)}$. 

Hence, since each internal vertex has net flow $a_i = \indeg(i)-1$, we apply Theorem~\ref{thm:vol-kpf} again to obtain that this Kostant partition function is equal to $\vol \mathcal{F}_{k^{a,b,c}_{n+2}(S)},$ thus proving equation \eqref{T(S) proof}. 

We can now sum both sides of equation \eqref{T(S) proof} over $S \in \binom{[n]}{k}$. Since the Kostant partition function interpretation of $\Psi_n(k,a,b,c)$ counts all flows with exactly $k$ strict inequalities $a_i < a-1 + c(i-1),$ we see that the left-hand side is now $\Psi_n(k,a,b,c)$, and the result follows. 
\end{proof}

On a polytope level, the volume interpretation of Theorem~\ref{psi kpf}~(ii)translates to the following result.

\begin{lemma} \label{psi polytope} For $S \subseteq [n],$ the polytopes $\mathcal{F}_{k^{a,b,c}_{n+2}(S)}$ are interior disjoint and satisfy 
\[
\mathcal{F}_{k^{a,b+1,c}_{n+2}} \equiv \bigcup_{S \subseteq [n]} \mathcal{F}_{k^{a,b,c}_{n+2}(S)}.
\]
\end{lemma}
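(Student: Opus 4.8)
The statement has two parts: the polytopes $\mathcal{F}_{k^{a,b,c}_{n+2}(S)}$ are interior disjoint as $S$ ranges over subsets of $[n]$, and their union is integrally equivalent to $\mathcal{F}_{k^{a,b+1,c}_{n+2}}$. The natural strategy is to exhibit $\mathcal{F}_{k^{a,b+1,c}_{n+2}}$ as the top of a reduction tree (in the sense of Proposition~\ref{reduction proposition}) whose leaves are exactly the graphs $k^{a,b,c}_{n+2}(S)$, so that the subdivision lemma delivers both the integral equivalence of the union and the interior-disjointness simultaneously. Concretely, I would start from $k^{a,b+1,c}_{n+2}$ and perform a sequence of reductions, at each internal vertex $i \in [n]$ deciding whether to ``split off'' the extra $(b+1)$-st outgoing edge $(i,n+1)$ into an incoming/outgoing pair, which corresponds to choosing whether $i \in S$.

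\medskip

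\emph{First step (setting up the reduction).} Recall that $k^{a,b+1,c}_{n+2}$ differs from $k^{a,b,c}_{n+2}$ by having $b+1$ rather than $b$ copies of each edge $(i,n+1)$. I would isolate one distinguished copy of $(i,n+1)$ at each internal vertex $i$ and, using an incoming edge $(0,i)$ together with this distinguished outgoing edge, apply the reduction rule at the triple $0 < i < n+1$. By \eqref{reduction1} and \eqref{reduction2}, the two children replace the pair $(0,i),(i,n+1)$ by a new edge $(0,n+1)$ in one branch, while retaining it in the other. Iterating this over all $n$ internal vertices produces a binary reduction tree with $2^n$ leaves, indexed by the subsets $S \subseteq [n]$ recording at which vertices the ``merge'' branch \eqref{reduction2} was taken. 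One checks that the leaf indexed by $S$ is precisely the graph obtained from $k^{a,b,c}_{n+2}$ by, for each $i \in S$, deleting an incoming edge $(0,i)$ and adding an outgoing edge $(i,n+1)$, plus a total of $n$ spurious edges $(0,n+1)$ distributed among the branches — that is, exactly $k^{a,b,c}_{n+2}(S)$ after accounting for the $(0,n+1)$ edges as in the proof of Theorem~\ref{psi kpf}~(ii).

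\medskip

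\emph{Second step (applying the subdivision lemma).} Once the reduction tree is in place, Proposition~\ref{reduction proposition} gives immediately, by induction on the depth of the tree, that the union of the leaf polytopes is integrally equivalent to $\mathcal{F}_{k^{a,b+1,c}_{n+2}}$ and that distinct leaves have disjoint interiors. The key technical point is that the edges $(0,n+1)$ introduced during the reduction do not alter the flow polytope up to integral equivalence: as observed in the proof of Theorem~\ref{psi kpf}~(ii), since the source has zero net flow in the relevant net flow vector and $(0,n+1)$ contributes nothing to the indegree of any internal vertex, such an edge carries forced flow zero and leaves both the volume and the integral-equivalence class unchanged. This lets me identify each leaf of the tree with $\mathcal{F}_{k^{a,b,c}_{n+2}(S)}$ as defined in the statement.

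\medskip

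\emph{Expected main obstacle.} The genuine bookkeeping difficulty is verifying that the leaves of the reduction tree are in \emph{bijection} with subsets $S$ and that each leaf is the correct graph $k^{a,b,c}_{n+2}(S)$, rather than some graph with the wrong edge multiplicities. Because there are $b+1$ copies of each $(i,n+1)$ and $a$ copies of each $(0,i)$, I must be careful to reduce only the one distinguished copy of $(i,n+1)$ against one chosen copy of $(0,i)$, so that after the reduction the surviving multiplicities are exactly $b$ outgoing and $a-1$ or $a$ incoming (according to $i \in S$ or not). Making the order of reductions explicit — processing vertices $1,2,\ldots,n$ in turn and at each one splitting off a single edge — should keep the tree clean and confirm that the $2^n$ leaves are genuinely distinct and in correspondence with $2^{[n]}$. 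The remaining content is then routine application of Proposition~\ref{reduction proposition}.
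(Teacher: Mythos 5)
Your overall strategy---building a reduction tree from $k^{a,b+1,c}_{n+2}$ by applying the subdivision lemma once at each internal vertex $i$ to a pair $(0,i),(i,n+1)$, indexing the $2^n$ leaves by which branch of \eqref{reduction1}--\eqref{reduction2} was taken, and invoking Proposition~\ref{reduction proposition} inductively---is exactly the paper's proof, and your description of the leaves (multiplicities $a-1$ incoming, $b+1$ outgoing at vertices of $S$; $a$ and $b$ elsewhere; $n$ added edges $(0,n+1)$) is correct.

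However, the ``key technical point'' of your second step is both unnecessary and false as stated. It is unnecessary because the definition of $k^{a,b,c}_{n+2}(S)$ \emph{already includes} the $n$ added edges $(0,n+1)$, so the leaves of your reduction tree are literally these graphs and there is nothing to ``account for.'' It is false because the net flow vector relevant to Lemma~\ref{psi polytope} is $(1,0,\ldots,0,-1)$, not $(0,d_1,\ldots)$: here the source has net flow $1$, so an edge $(0,n+1)$ does \emph{not} carry forced zero flow---it genuinely enlarges the polytope (raising its dimension), and $\mathcal{F}_G(1,0,\ldots,0,-1)$ is not integrally equivalent to the polytope of $G$ with an extra $(0,n+1)$ edge. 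The zero-flow-at-the-source argument you cite lives on the Kostant partition function side of the proof of Theorem~\ref{psi kpf}~(ii), where the netflow vector does vanish at the source, and cannot be imported here. Had this claim actually been load-bearing, your proof would fail; as it is, deleting that passage leaves a correct argument identical to the paper's. Two smaller slips to fix: the reduction rule does not ``replace the pair $(0,i),(i,n+1)$ by $(0,n+1)$ in one branch while retaining it in the other''---each branch deletes exactly one of the two edges and adds $(0,n+1)$; and in your final paragraph the surviving outgoing multiplicity is not ``exactly $b$'' in both cases---for $i\in S$ it is $b+1$, consistent with your (correct) first-step description.
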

\begin{proof}
We apply the subdivision lemma \eqref{reduction1} and \eqref{reduction2} at each internal vertex of $F_{k^{a,b+1,c}_{n+2}}$ exactly once. For each internal vertex $i$, the edge $(0,n+1)$ is added, and either an incoming edge $(0,i)$ or outgoing edge $(i,n+1)$ is deleted. This is shown in Figure~\ref{fig:pf volume subdiv}.

\begin{figure}
     \centering
     \begin{subfigure}[b]{0.45\textwidth}
     \raisebox{20pt}{\includegraphics[width=\textwidth]{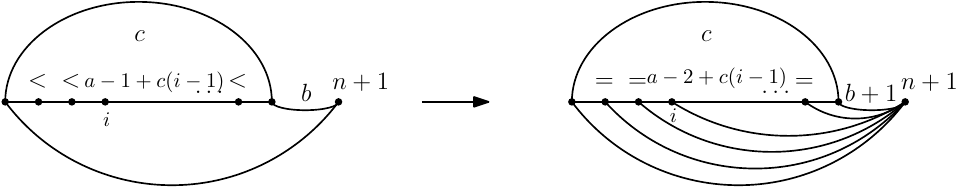}}
    \caption{} 
    \label{fig:adding edges}
     \end{subfigure}
     \begin{subfigure}[b]{0.45\textwidth}
         \centering
         \includegraphics[width=\textwidth]{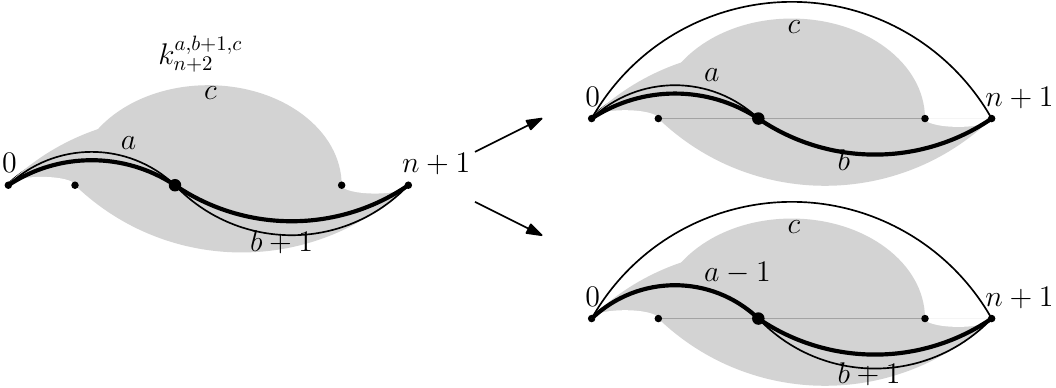}
         \caption{}
         \label{fig:pf volume subdiv}
     \end{subfigure}
     \caption{(A) For each strict inequality where $a_i < \indeg(i) - 1,$ the inequality can be weakened by decreasing $a$ by 1 to obtain $a_i \leq \indeg(i)-1.$ We then add an additional edge $(i, n+1)$ to carry the necessary flow such that $a_i = \indeg(i) - 1.$. (B)  The graph $k_{n+2}^{a,b+1,c}$ is shown in gray on the left with some highlighted edges in black. The graphs on the right give the two subdivisions obtained from applying the subdivision lemma on a single internal vertex.}
\end{figure}

That is, for each vertex $i,$ one of two cases must hold:
\begin{enumerate}[label=(\roman*)]
    \item Edge $(0, i)$ appears $a-1$ times and edge $(i,n+1)$ appears $b+1$ times.
    \item Edge $(0,i)$ appears $a$ times and edge $(i,n+1)$ $b$ times. 
\end{enumerate}
Each reduced graph is the polytope $\mathcal{F}_{k^{a,b,c}_{n+2}(S)},$ where $S$ is the set of vertices satisfying case (i). Since the polytopes $\mathcal{F}_{k^{a,b,c}_{n+2}(S)}$ are obtained by applying the reduction rule on $\mathcal{F}_{k^{a,b+1,c}_{n+2}}$,  they are interior disjoint by Proposition~\ref{reduction proposition}, and their union is integrally equivalent to $\mathcal{F}_{k^{a,b+1,c}_{n+2}}$.
\end{proof}

As an application of these interpretations we now prove Corollary~\ref{cor:M refinement}, which refines the product $M_n(a,b,c)$.

\begin{example} The polytope $\mathcal F_{k_4^{a,b+1,c}}(1,0,\dots,0,-1)$ can be subdivided into four flow polytopes of the form $\mathcal F_{k_4^{a,b+1,c}(S)}(1,0,\dots,0,-1)$, which are grouped into three collections based on the size of $S$. The volume of the collection corresponding to $|S|=k$ is counted by $\Psi_2(k,a,b,c)$. See Figure~\ref{fig: polytope interpretation Psi}.
\end{example}

\begin{figure}
    \centering
   \includegraphics[scale=0.34]{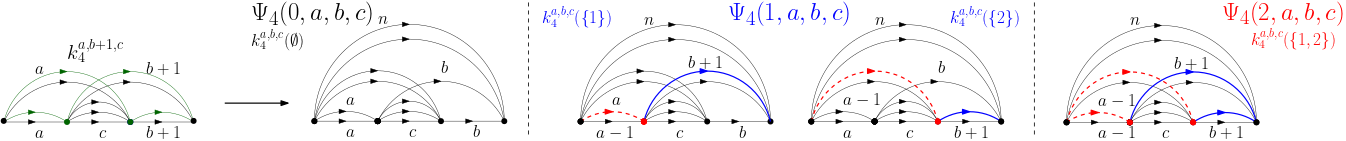}
    \caption{Three collections of flow polytopes $\mathcal{F}_{k_4^{a,b,c}}(S)$ of the subdivision of $\mathcal{F}_{k_4^{a,b,c}}$ whose volumes are given by $\Psi_2(k,a,b,c)$ for $k=0,1,2$ respectively.}
    \label{fig: polytope interpretation Psi}
\end{figure}

\begin{proof}[Proof of Corollary~\ref{cor:M refinement} via Kostant partition function]
The sum on the right-hand side of \eqref{eq: Psi refine M} over $k$ of $\Psi_n(k,a,b,c)$ is the sum of all Kostant partition functions $K_{k_{n+2}^{a,b,c}}(0, a_1, \dots, a_{n}, -\sum_{j=1}^n a_j)$ such that for $i \in [n],$ $a_i \leq a - 1 + c(i-1).$ This is equivalent to adding another edge between each vertex $i$ and the sink with flows such that each net flow satisfies $a_i = a-1 + c(i-1)$. Hence we see this sum is $M_n(a,b+1,c).$
\end{proof}

\begin{proof}[Proof of Corollary~\ref{cor:M refinement} via volumes]
Using Lemma~\ref{psi polytope} and computing the volume on both sides gives 
\[
\vol \mathcal{F}_{k^{a,b+1,c}_{n+2}} = \sum_{S \subseteq [n]} \vol \mathcal{F}_{k^{a,b,c}_{n+2}(S)}.
\]
The result follows by applying Theorem~\ref{thm:ckm} to the left-hand side and Theorem~\ref{psi kpf} to the right-hand side.
\end{proof}

\subsection{Recurrence Relations of \texorpdfstring{$\Psi_n(k,a,b,c)$}{Psi_n(k,a,b,c)}}

In this section we prove recurrence relations satisfied by $\Psi_n(k,a,b,c)$ and that are instrumental to our proof of Theorem~\ref{thm:psi product}. First we show two cases where $\Psi_n(k,a,b,c)$ is equivalent to the Morris identity.

\begin{proposition} \label{phi m}
Let $n, a,b,c$ be positive integers. Then
\begin{align}
    \Psi_n(0,a,b,c) &= M_n(a,b,c)\\
    \Psi_n(n,a,b,c) &= M_n(a-1,b+1,c).
\end{align}
\end{proposition}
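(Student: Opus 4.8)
The statement asserts two boundary evaluations of $\Psi_n(k,a,b,c)$, and both follow from reading off the appropriate coefficient of $t^k$ in the generating product
\[
\prod_{i=1}^n (1-x_i)^{-b}x_i^{-a+1}\left(1+t\frac{x_i}{1-x_i}\right) \prod_{1 \leq i < j \leq n} (x_j - x_i)^{-c}.
\]
The plan is to expand the $t$-factor $\prod_{i=1}^n\bigl(1+t\tfrac{x_i}{1-x_i}\bigr)$ and extract the two extreme coefficients. For $k=0$, the coefficient $[t^0]$ of this product is simply $1$, so the remaining constant term is exactly the defining product of $M_n(a,b,c)$; this gives $\Psi_n(0,a,b,c)=M_n(a,b,c)$ immediately, and indeed this was already observed right after the definition of $\Psi$.

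For the $k=n$ case, I would note that $[t^n]\prod_{i=1}^n\bigl(1+t\tfrac{x_i}{1-x_i}\bigr) = \prod_{i=1}^n \tfrac{x_i}{1-x_i}$, since choosing the $t$-term from every factor is the only way to reach total degree $n$. Substituting this back, the constant term becomes
\[
\Psi_n(n,a,b,c) = \CT_x \prod_{i=1}^n (1-x_i)^{-b}x_i^{-a+1}\cdot\frac{x_i}{1-x_i}\prod_{1 \leq i < j \leq n} (x_j - x_i)^{-c}.
\]
The key step is then to absorb the extra factors into the Morris-type integrand: combining $(1-x_i)^{-b}(1-x_i)^{-1}=(1-x_i)^{-(b+1)}$ and $x_i^{-a+1}\cdot x_i = x_i^{-(a-1)+1}$ shows that the integrand is precisely the defining integrand of $M_n(a-1,b+1,c)$ with parameters $a\mapsto a-1$, $b\mapsto b+1$, $c\mapsto c$. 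Hence $\Psi_n(n,a,b,c)=M_n(a-1,b+1,c)$.

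Both evaluations are essentially bookkeeping on exponents, so there is no serious analytic obstacle; the only point requiring a moment of care is that $M_n(a-1,b+1,c)$ is still well-defined under the hypotheses. The definition in Theorem~\ref{morris} requires the first argument to be a positive integer, whereas $a-1$ could be $0$ when $a=1$. I would therefore remark that the constant-term expression still makes sense (the integrand $x_i^{-(a-1)+1}=x_i^{\,2-a}$ is a Laurent monomial for any integer $a\ge 1$), so the identity $\Psi_n(n,a,b,c)=M_n(a-1,b+1,c)$ holds as an equality of constant terms; alternatively one can verify consistency with the Kostant partition function interpretation of Theorem~\ref{psi kpf}(i), where $k=n$ forces every net flow to satisfy the strict inequality $a_i < a-1+c(i-1)$, matching the lattice-point count associated to parameter $a-1$.
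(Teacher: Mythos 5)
Your proof is correct, and it takes a genuinely different route from the paper's. You argue directly from the constant-term definition of $\Psi_n(k,a,b,c)$: extracting $[t^0]$ leaves exactly the Morris integrand, and extracting $[t^n]$ forces the choice of $t\tfrac{x_i}{1-x_i}$ in every factor, after which the absorption $(1-x_i)^{-b}\,\tfrac{x_i}{1-x_i}\,x_i^{-a+1} = (1-x_i)^{-(b+1)}x_i^{-(a-1)+1}$ exhibits the integrand of $M_n(a-1,b+1,c)$; this is valid formal power series bookkeeping. The paper instead argues combinatorially through the Kostant partition function interpretation of Theorem~\ref{psi kpf}(i): for $k=0$ all $n$ equalities $a_i = a-1+c(i-1)$ hold, which matches the lattice-point description of $M_n(a,b,c)$ in Theorem~\ref{thm:ckm}; for $k=n$ all inequalities are strict, which is rephrased as weak inequalities for the parameter $a-1$, and adding an extra edge $(i,n+1)$ at each internal vertex (raising $b$ to $b+1$) carries the slack and forces equality, giving a bijection of integer flows with those counted by $M_n(a-1,b+1,c)$. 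Your algebraic route is shorter and self-contained, needing neither Theorem~\ref{psi kpf} nor Theorem~\ref{thm:ckm}; the paper's bijective phrasing is deliberate, since this proposition feeds into Lemma~\ref{psi recurrences}, where the authors emphasize that all relations except \eqref{psi rel 4} admit combinatorial proofs. Your closing remark about $a=1$ is well taken: when $a-1=0$ the identity should be read as an equality of constant-term expressions (or of flow counts), since the hypotheses of Theorem~\ref{morris} nominally require a positive first argument.
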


\begin{proof}
The first equation holds since $n$ equalities implies the exact same constraints as those of $M_n(a,b,c).$
The second equation holds since 0 equalities implies the upper bound of the inequalities can be decreased by 1 (by decreasing $a$ by 1) to make a weak inequality, and another edge from each vertex to the sink can be added with the necessary flow to force equality. This transformation gives a bijection with $M_n(a-1,b+1,c).$
\end{proof}

We now construct a bijection on the sets of integer flows of flow polytopes $F_{k_{n+2}^{1,b,c}}(0, 0, a_2, \dots, a_{n}, -\sum_{j=2}^n a_j)$ and $\mathcal F_{k_{n+1}^{c+1,b,c}}(0, a_2, \dots, a_{n}, -\sum_{j=2}^n a_j)$ by contracting an edge in the corresponding graph. 

\begin{proposition}
For a net flow vector $\mathbf{a} = (0, 0, a_2, \dots, a_{n}, -\sum_{j=2}^n a_j),$ let $\mathbf{\tilde a} := (0,a_2, \dots, a_{n}, -\sum_{j=2}^n a_j)$. Then for positive integers $b$ and $n$ and nonnegative integer $c,$
\begin{equation} \label{varphi kpf} 
    K_{k_{n+2}^{1,b,c}}(\mathbf{a}) = K_{k_{n+1}^{c+1,b,c}}(\mathbf{\tilde a}).
\end{equation}
\end{proposition}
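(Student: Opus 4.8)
The plan is to realize the identity \eqref{varphi kpf} as an explicit bijection induced by contracting the edge $(0,1)$ of $k_{n+2}^{1,b,c}$, exploiting that the net flow vector $\mathbf{a}=(0,0,a_2,\dots,a_n,-\sum_{j=2}^n a_j)$ vanishes at both vertices $0$ and $1$. First I would record the effect of this vanishing near the source. Since vertex $0$ is the unique source it has no incoming edges, so its net flow equals its total outgoing flow; as this is $0$ and flows are nonnegative integers, every edge $(0,i)$ carries flow $0$. Consequently the only edge into vertex $1$, namely $(0,1)$, carries flow $0$, so the net flow $0$ at vertex $1$ forces each outgoing edge $(1,j)$ and $(1,n+1)$ to carry flow $0$ as well. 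Hence every integer flow in $\mathcal{F}^{\mathbb{Z}}_{k_{n+2}^{1,b,c}}(\mathbf{a})$ is supported on the subgraph induced by $\{2,\dots,n,n+1\}$.

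Next I would contract the edge $(0,1)$, merging vertices $0$ and $1$ into a single new source $w$ whose net flow is $0+0=0$. Under this contraction the edges $(0,j)$ of multiplicity $1$ and $(1,j)$ of multiplicity $c$, for $j\in[2,n]$, combine into edges $(w,j)$ of multiplicity $c+1$; the edges $(i,j)$ and $(i,n+1)$ among $\{2,\dots,n\}$ are unchanged; and the edges $(1,n+1)$ of multiplicity $b$ become $b$ edges $(w,n+1)$ running directly from the new source to the sink. After relabeling $w\mapsto 0$ and $i\mapsto i-1$ for $i\in\{2,\dots,n+1\}$, this is precisely the graph $k_{n+1}^{c+1,b,c}$ together with $b$ spurious source-to-sink edges, and the net flow vector becomes $\mathbf{\tilde a}=(0,a_2,\dots,a_n,-\sum_{j=2}^n a_j)$.

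To finish I would argue these spurious edges do not affect the count. Because the merged source $w$ again has net flow $0$, all of its outgoing edges carry flow $0$; in particular the $b$ new edges $(w,n+1)$ are forced to $0$, exactly as in the proof of Theorem~\ref{psi kpf}(ii), where adding source-to-sink edges changes neither any internal indegree nor the Kostant partition function. Deleting these zero-flow edges leaves $k_{n+1}^{c+1,b,c}$ with net flow $\mathbf{\tilde a}$, whose flows are likewise supported on the subgraph induced by $\{1,\dots,n-1,n\}$. The map sending $f$ to the flow that agrees with $f$ on the relabeled common core and vanishes on every source edge is then the desired bijection, giving \eqref{varphi kpf}.

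The only genuine subtlety—and hence the main point to verify carefully—is the bookkeeping of edge multiplicities under contraction: contracting $(0,1)$ a priori introduces the source-to-sink edges of multiplicity $b$, and one must confirm that these carry zero flow and so leave $K$ unchanged, which is guaranteed precisely by the vanishing net flow at the merged source. As a consistency check one can instead extract a coefficient of the generating function \eqref{eq: kpf gen function kabc}: taking $[x_0^0]$ and $[x_1^0]$ kills the source factors $\prod_i(1-x_0x_i^{-1})^{-1}$ and the $x_1$-factors, and the surviving product $\prod_{i=2}^n(1-x_ix_{n+1}^{-1})^{-b}\prod_{2\le i<j\le n}(1-x_ix_j^{-1})^{-c}$ coincides, after relabeling, with the expression obtained for $k_{n+1}^{c+1,b,c}$ upon extracting $[y_0^0]$.
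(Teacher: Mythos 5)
Your proof is correct and takes essentially the same approach as the paper: contract the edge $(0,1)$, use that the zero net flow at vertices $0$ and $1$ (together with nonnegativity) forces every edge out of them to carry zero flow, and relabel to identify the contracted graph with $k_{n+1}^{c+1,b,c}$. If anything, you are more careful than the paper's own write-up, which silently ignores the $b$ edges $(1,n+1)$ that become source-to-sink edges under contraction; your explicit argument that these spurious edges carry zero flow and hence leave the Kostant partition function unchanged handles that bookkeeping cleanly.
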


\begin{proof}
Define the map
\[
\varphi: \mathcal F^\mathbb{Z}_{k_{n+2}^{1,b,c}}(\mathbf{a}) \to \mathcal F^\mathbb{Z}_{k_{n+1}^{c+1,b,c}}(\mathbf{\tilde a}),
\]

where $f \mapsto f',$ with $f'(i,j)$ given by \begin{equation*}
    f'(i,j) = \begin{cases}
    0, & i = 0 \\
    f(i+1,j+1), & 1 \leq i < j \leq n.
        \end{cases}
\end{equation*}

This map contracts edge $(0,1)$ to create vertex $0$ and relabels each vertex $i \in [2,n+1]$ by $i-1.$ We see the $c$ edges $(1, i)$ for $i \in [2, n]$ become identical with the edges of the form $(0, i), i \in [2,n].$ Hence, the graph transforms to become $k_{n+1}^{c+1,b,c}.$ We now show that $\varphi$ is a bijection. It is sufficient we show $\varphi$ has a well-defined inverse function for all $f \in \mathcal F_{k_{n+1}^{c+1,b,c}}(\mathbf{\tilde a})$. Define the inverse map
\[
\varphi^{-1}: \mathcal F^\mathbb{Z}_{k_{n+1}^{c+1,b,c}}(\mathbf{\tilde a}) \to \mathcal F^\mathbb{Z}_{k_{n+2}^{1,b,c}}(\mathbf{a})
\]

where $f \mapsto f',$ with $f'(i,j)$ given by \begin{equation*}
    f'(i,j) = \begin{cases}
    0, & 0 \leq i \leq 1 \\
    f(i-1,j-1), & 2 \leq i < j \leq n+1.
    \end{cases}
\end{equation*}

Note that for $(i,j) \in E(\mathcal F^\mathbb{Z}_{k_{n+2}^{1,b,c}}(\mathbf{a})),$ $f(0,j) = f(1,j) = 0$ as the net flows at vertices 0 and 1 are both zero, so we see $\varphi^{-1}$ is indeed our desired inverse function. Thus $\varphi$ is a bijection, and the result follows.
\end{proof}

We further strengthen this contraction identity to hold bijectively for $\Psi_n(k,1,b,c).$

\begin{lemma}[Contraction Lemma] \label{psi contraction} For positive integers $b$ and $n$ and nonnegative integers $c$ and $k \leq n,$ 
\begin{equation*} 
    \Psi_n(k,1,b,c) = \Psi_{n-1}(k,c+1,b,c).
\end{equation*}
\end{lemma}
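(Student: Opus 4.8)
The plan is to build on the previous proposition, which already established the contraction bijection $K_{k_{n+2}^{1,b,c}}(\mathbf{a}) = K_{k_{n+1}^{c+1,b,c}}(\mathbf{\tilde a})$ at the level of individual Kostant partition functions, and to promote it to the level of the refined sums defining $\Psi_n(k,1,b,c)$. By Theorem~\ref{psi kpf}~(i), $\Psi_n(k,1,b,c)$ is the sum of $K_{k_{n+2}^{1,b,c}}(0,a_1,\ldots,a_n,-\sum_j a_j)$ over net flow vectors with $a_i \le c(i-1)$ for all $i$ (using $a=1$, so $a-1+c(i-1) = c(i-1)$) and with equality holding for exactly $n-k$ indices. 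The first observation I would record is that for such vectors the constraint at vertex $1$ is $a_1 \le 0$, and since flows are nonnegative the net flow at vertex $1$ must satisfy $a_1 = 0 = c(1-1)$. Thus vertex $1$ is \emph{always} an equality index, and it consumes one of the $n-k$ mandated equalities.

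First I would make this bookkeeping precise: summing $\Psi_n(k,1,b,c)$ over vectors with $a_1 = 0$ forced, the remaining data is a vector $(a_2,\ldots,a_n)$ with $a_i \le c(i-1)$, with equality at exactly $(n-k)-1$ of the indices $i \in \{2,\ldots,n\}$ and strict inequality at exactly $k$ of them. Next I would apply the contraction bijection $\varphi$ from the preceding proposition termwise. Under $\varphi$, the graph $k_{n+2}^{1,b,c}$ with the forced zero net flow at vertex $1$ is contracted to $k_{n+1}^{c+1,b,c}$, and each net flow vector $(0,0,a_2,\ldots,a_n,\ldots)$ maps to $(0,a_2,\ldots,a_n,\ldots)$ with the vertex relabeling $i \mapsto i-1$. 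The key point to verify is that the inequality constraints transform correctly: after relabeling, the former vertex $i$ (for $i\in\{2,\ldots,n\}$) becomes vertex $i-1$ in the smaller graph, and its bound $c(i-1)$ should match the bound $(c+1)-1 + c((i-1)-1) = c(i-1)$ prescribed by Theorem~\ref{psi kpf}~(i) applied to $k_{n+1}^{c+1,b,c}$ with parameter $a = c+1$. This is precisely the arithmetic identity $a-1+c(j-1) = c + c(j-1) = c\cdot j$ for the image index $j = i-1$, i.e. $c((i-1)) = c(i-1)$, so the strict/equality status of each constraint is preserved under the bijection.

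Having matched both the sets of vectors and their constraints, I would conclude that the bijection carries the index count through: the $n-k$ equalities in the larger problem (one of which is the forced equality at vertex $1$) correspond to exactly $(n-k)-1$ equalities among the $n-1$ internal vertices of the smaller graph, and since $k_{n+1}^{c+1,b,c}$ has $n-1$ internal vertices, $(n-1)-((n-k)-1) = k$ of them carry strict inequalities. By Theorem~\ref{psi kpf}~(i) applied to $k_{n+1}^{c+1,b,c}$, this sum is exactly $\Psi_{n-1}(k,c+1,b,c)$, giving the desired identity. The main obstacle I anticipate is not any deep difficulty but careful index and parameter tracking: one must confirm both that vertex $1$ is genuinely forced to be an equality index for \emph{every} vector in the sum (so the reindexing of the equality count by one is uniform), and that the shift $a=1 \mapsto a = c+1$ interacts correctly with the relabeling so that the constraint $a_i \le c(i-1)$ translates to the correct bound in the contracted graph. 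I would double-check the edge case $c=0$ separately, where the bound at each vertex collapses and the statement should still hold.
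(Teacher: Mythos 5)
Your proposal is correct and takes essentially the same route as the paper's proof: both express $\Psi_n(k,1,b,c)$ and $\Psi_{n-1}(k,c+1,b,c)$ as sums of Kostant partition functions via Theorem~\ref{psi kpf}~(i), observe that the constraint at the first internal vertex is forced to be an equality, and then apply the contraction identity \eqref{varphi kpf} termwise under the index-shift bijection on net flow vectors. Your explicit check that the bound $c(i-1)$ at vertex $i$ matches the bound $(c+1)-1+c\bigl((i-1)-1\bigr)$ at the relabeled vertex $i-1$ is exactly the arithmetic the paper leaves implicit.
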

\begin{proof}
Recall $\Psi_n(k,1,b,c)$ is the sum of Kostant partition functions of the form $K_{k_{n+2}^{1,b,c}}(\mathbf{a}),$ with $\mathbf{a} = (0, 0, a_2, \dots, -\sum_{j=2}^n a_j)$ where for $i \in [2,n],$ $a_i < c(i-1)$ holds $k$ times and $a_i = c(i-1)$ holds $n-1-k$ times (since the first internal vertex trivially satisfies this equality). Let $A$ be the set of all such $\mathbf{a}$ satisfying these conditions. That is,  \begin{equation*}
    \Psi_n(k,1,b,c) = \sum_{\mathbf{a} \in A} K_{k_{n+2}^{1,b,c}}(\mathbf{a}).
\end{equation*}

Similarly, let $A'$ be the set of all $\mathbf{a'} = (0, a_2, \dots, -\sum_{j=2}^n a_j)$ where for $i \in [2,n],$ $a_i < c(i-1)$ holds $k$ times and $a_i = c(i-1)$ holds $n-1-k$ times. Then \begin{equation*}
    \Psi_{n-1}(k,c+1,b,c) = \sum_{\mathbf{a'} \in A'} K_{k_{n+2}^{c+1,b,c}}(\mathbf{a'}).
\end{equation*}

We see that the map $\varphi: A \to A', (0, 0, a_2, \dots, -\sum_{j=2}^n a_j) \mapsto (0, a_2, \dots, -\sum_{j=2}^n a_j)$ is a bijection. By equating the Kostant partition functions using equation \eqref{varphi kpf}, the result follows.
\end{proof}

As a result of this lemma, the following two corollaries are immediate.

\begin{corollary} \label{M contraction} For positive integers $b$ and $n$ and nonnegative integer $c$,
\begin{equation*}
    M_n(1,b,c) = M_{n-1}(c+1,b,c).
\end{equation*}
\end{corollary}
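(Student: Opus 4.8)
The statement to prove is Corollary~\ref{M contraction}, namely $M_n(1,b,c) = M_{n-1}(c+1,b,c)$, and it is labeled as being ``immediate'' from the Contraction Lemma (Lemma~\ref{psi contraction}). The plan is therefore to obtain it simply by specializing the Contraction Lemma at $k=0$. Recall from Proposition~\ref{phi m} that $\Psi_n(0,a,b,c) = M_n(a,b,c)$ for all valid parameters. Thus setting $k=0$ in Lemma~\ref{psi contraction} gives
\[
M_n(1,b,c) = \Psi_n(0,1,b,c) = \Psi_{n-1}(0,c+1,b,c) = M_{n-1}(c+1,b,c),
\]
which is exactly the desired identity. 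This is the cleanest route, and it requires no new work beyond citing the two prior results.

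\textbf{Alternative combinatorial route.} Should one prefer a self-contained argument, I would instead invoke directly the bijective contraction identity \eqref{varphi kpf}, namely $K_{k_{n+2}^{1,b,c}}(\mathbf{a}) = K_{k_{n+1}^{c+1,b,c}}(\mathbf{\tilde a})$, applied to the single net flow vector forcing equality at every internal vertex. Concretely, take $\mathbf{a} = (0,0,a_2,\ldots,a_n,-\sum_{j=2}^n a_j)$ with $a_i = c(i-1)$ for each $i \in [2,n]$; under the contraction map $\varphi$ this corresponds to $\mathbf{\tilde a} = (0,a_2,\ldots,a_n,-\sum_{j=2}^n a_j)$ with $a_i = c(i-1)$, which is precisely the net flow vector of the Kostant partition function interpretation of $M_{n-1}(c+1,b,c)$ (since on $k_{n+1}^{c+1,b,c}$ the indegree of internal vertex $i$ minus one is $(c+1)-1 + c(i-2) = c(i-1)$). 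By Theorem~\ref{thm:ckm}, $K_{k_{n+2}^{1,b,c}}(\mathbf{a}) = M_n(1,b,c)$ and $K_{k_{n+1}^{c+1,b,c}}(\mathbf{\tilde a}) = M_{n-1}(c+1,b,c)$, so the bijection yields the result.

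\textbf{Main obstacle.} There is essentially no obstacle here, as the heavy lifting was done in establishing \eqref{varphi kpf} and Lemma~\ref{psi contraction}. The only point requiring care is bookkeeping: one must verify that the net flow vector enforcing $a_i = a-1+c(i-1)$ on $k_{n+2}^{1,b,c}$ (with $a=1$, so $a_i = c(i-1)$) matches, after contracting the edge $(0,1)$ and relabeling, the net flow vector enforcing equality on $k_{n+1}^{c+1,b,c}$. This is precisely the indegree computation above — the $c$ edges $(1,i)$ merge with the $a=1$ edges $(0,i)$ to produce multiplicity $c+1$ on each edge $(0,i)$ of the contracted graph — and it is routine. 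I would present the first route (specializing Lemma~\ref{psi contraction} at $k=0$) as the official proof for brevity.
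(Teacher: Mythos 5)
Your proposal is correct and follows exactly the paper's own argument: the paper proves this corollary by specializing Lemma~\ref{psi contraction} to $k=0$ and identifying $\Psi_n(0,a,b,c)$ with $M_n(a,b,c)$, just as in your first route. The alternative route you sketch via \eqref{varphi kpf} is also sound, but it merely unwinds the mechanism already packaged in the Contraction Lemma, so nothing further is needed.
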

\begin{proof}
This is a result of Lemma \ref{psi contraction} when $k = 0.$
\end{proof}

\begin{corollary} \label{a11 refinement}
For positive integers $a$ and $n$, it holds that
\begin{equation*}
M_n(a,1,1) = \sum_{k=0}^n \Psi_{n-1}(k,a,1,1).
\end{equation*}
\end{corollary}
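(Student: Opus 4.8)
The plan is to reduce the claim to the already-established refinement (Corollary~\ref{cor:M refinement}) by transforming the left-hand side $M_n(a,1,1)$ into a Morris constant term of the form $M_{n-1}(a,2,1)$, to which the refinement applies directly. The key observation is that the right-hand side is essentially an instance of Corollary~\ref{cor:M refinement} at index $n-1$: taking $b=1$ and $c=1$ there gives $M_{n-1}(a,2,1) = \sum_{k=0}^{n-1}\Psi_{n-1}(k,a,1,1)$. Moreover, $\Psi_{n-1}(n,a,1,1)=0$, since $[t^n]$ of a product of only $n-1$ factors $(1+t\,x_i/(1-x_i))$ vanishes, so the sum over $k$ up to $n$ agrees with the sum up to $n-1$. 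Thus it suffices to prove the single identity $M_n(a,1,1)=M_{n-1}(a,2,1)$.

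To establish $M_n(a,1,1)=M_{n-1}(a,2,1)$, I would sandwich the contraction identity (Corollary~\ref{M contraction}) between two applications of the symmetry $M_n(a,b,c)=M_n(b,a,c)$ (Corollary~\ref{M symmetry}). First, symmetry gives $M_n(a,1,1)=M_n(1,a,1)$. Next, Corollary~\ref{M contraction} with $b=a$ and $c=1$ reads $M_n(1,a,1)=M_{n-1}(2,a,1)$, lowering the index from $n$ to $n-1$ while raising the first argument from $1$ to $2$. Finally, applying symmetry once more yields $M_{n-1}(2,a,1)=M_{n-1}(a,2,1)$. Chaining these three equalities gives $M_n(a,1,1)=M_{n-1}(a,2,1)$, as desired.

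Combining the two steps, $M_n(a,1,1)=M_{n-1}(a,2,1)=\sum_{k=0}^{n-1}\Psi_{n-1}(k,a,1,1)=\sum_{k=0}^{n}\Psi_{n-1}(k,a,1,1)$, which is the claim. The only subtlety, and really the crux of the argument, is recognizing that symmetry and contraction together convert $M_n(a,1,1)$ into exactly the shape $M_{n-1}(a,2,1)$ on which the refinement of Corollary~\ref{cor:M refinement} acts; once this intermediate identity is spotted, every remaining step is a direct citation of an earlier result and no computation is required. I anticipate no genuine obstacle beyond this bookkeeping, as all ingredients have already been proved.
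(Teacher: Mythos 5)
Your proposal is correct and follows essentially the same route as the paper's proof: the paper likewise chains the symmetry $M_n(a,b,c)=M_n(b,a,c)$ with the contraction identity to obtain $M_n(a,1,1)=M_{n-1}(a,2,1)$ and then invokes Corollary~\ref{cor:M refinement}. Your explicit note that $\Psi_{n-1}(n,a,1,1)=0$, so the sum up to $n$ equals the sum up to $n-1$, is a small point of care the paper leaves implicit.
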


\begin{proof}
By Corollary \ref{M contraction} and Proposition \ref{M symmetry}, we see that $M_n(a,1,1) = M_{n-1}(a,2,1).$ Hence applying Lemma \ref{cor:M refinement}, the result follows.
\end{proof}

Following the approach of Baldoni-Vergne in their proof of Theorem~\ref{thm:BV}, we now give relations of $\Psi_n(k,a,b,c)$ that we later show uniquely determine this function.

\begin{lemma} \label{psi recurrences}
For nonnegative integer $c,$ positive integers $a,b,n,$ and nonnegative integer $k \leq n,$ the constant term $\Psi_n(k,a,b,c)$ satisfies the following identities:
\begin{align}
      \Psi_n(n,a,b,c) &= \Psi_n(0,a-1,b+1,c) \label{psi rel 1} \\
      \Psi_n(n-1,1,b,c) &= \Psi_{n-1}(0,c,b+1,c)  \label{psi rel 2}\\
     \Psi_n(0,1,b,0) &= 1 \label{psi rel 3}\\
     k(b+(k-1)c/2)\cdot \Psi_{n}(k,a,b,c) &= (n-k+1)(a-1+(n-k)c/2) \cdot \Psi_{n}(k-1,a,b,c)  \textrm{ for } 1 \leq k \leq n \label{psi rel 4}.
\end{align}
\end{lemma}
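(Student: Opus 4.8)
The first three relations follow quickly from results already established, so the plan is to dispatch them and then concentrate on \eqref{psi rel 4}. For \eqref{psi rel 1}, Proposition~\ref{phi m} gives both $\Psi_n(n,a,b,c)=M_n(a-1,b+1,c)$ and $\Psi_n(0,a-1,b+1,c)=M_n(a-1,b+1,c)$, so the two sides coincide. For \eqref{psi rel 2}, the Contraction Lemma~\ref{psi contraction} at $k=n-1$ gives $\Psi_n(n-1,1,b,c)=\Psi_{n-1}(n-1,c+1,b,c)$, and applying Proposition~\ref{phi m} (at the top index for $n-1$ variables) turns this into $M_{n-1}(c,b+1,c)=\Psi_{n-1}(0,c,b+1,c)$. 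For the base relation \eqref{psi rel 3}, Proposition~\ref{phi m} gives $\Psi_n(0,1,b,0)=M_n(1,b,0)$, and setting $a=1$, $c=0$ in the defining constant term leaves $\CT_x\prod_{i=1}^n(1-x_i)^{-b}=\prod_{i=1}^n\CT_{x_i}(1-x_i)^{-b}=1$; equivalently, the interpretation of Theorem~\ref{psi kpf}(i) forces every internal net flow to be $0$, admitting only the zero flow.

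The remaining relation \eqref{psi rel 4} is the crux and is genuinely algebraic; the plan is to follow the Baldoni--Vergne residue method built on the vanishing of the residue of a total derivative. First I would rewrite $\Psi_n$ as an iterated residue: writing $g_i:=\tfrac{x_i}{1-x_i}$ and $\tilde\Theta:=\prod_{i=1}^n(1-x_i)^{-b}x_i^{-a}\prod_{1\le i<j\le n}(x_j-x_i)^{-c}$, and using $\CT_{x_i}f=\Res_{x_i}(f/x_i)$ together with $[t^k]\prod_i(1+tg_i)=e_k(g_1,\dots,g_n)$, one gets
\[
\Psi_n(k,a,b,c)=\Res_x\bigl[e_k(g_1,\dots,g_n)\,\tilde\Theta\bigr].
\]
For each $\ell$ I would then apply $\Res_{x_\ell}\partial_{x_\ell}(\cdot)=0$ to the multiplier $\Lambda_\ell:=x_\ell\,e_{k-1}(\hat g_\ell)$, where $\hat g_\ell$ denotes the $g$-variables with $g_\ell$ omitted (so $\Lambda_\ell$ depends on $x_\ell$ only through the explicit factor), and sum over $\ell$. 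Using the logarithmic derivative
\[
\frac{\partial_{x_\ell}\tilde\Theta}{\tilde\Theta}=\frac{b}{1-x_\ell}-\frac{a}{x_\ell}+c\sum_{j\neq\ell}\frac{1}{x_j-x_\ell},
\]
the bracket $\partial_{x_\ell}\Lambda_\ell+\Lambda_\ell\,\partial_{x_\ell}\tilde\Theta/\tilde\Theta$ collapses to $e_{k-1}(\hat g_\ell)\bigl[(1-a)+b\,g_\ell+c\,x_\ell\sum_{j\neq\ell}(x_j-x_\ell)^{-1}\bigr]$.

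Summing over $\ell$ and integrating against $\tilde\Theta$, the two combinatorial factors appear from the elementary-symmetric identities $\sum_\ell g_\ell\,e_{k-1}(\hat g_\ell)=k\,e_k(g)$ and $\sum_\ell e_{k-1}(\hat g_\ell)=(n-k+1)\,e_{k-1}(g)$, which produce $bk\,\Psi_n(k)$ and $(1-a)(n-k+1)\Psi_n(k-1)$ respectively. The Vandermonde cross term is handled by pairwise symmetrization: grouping the ordered pairs $(\ell,j)$ and $(j,\ell)$, and using $\tfrac{x_\ell g_j-x_j g_\ell}{x_j-x_\ell}=g_\ell g_j$ together with $e_{k-1}(\hat g_\ell)=e_{k-1}(\hat g_{\ell j})+g_j\,e_{k-2}(\hat g_{\ell j})$, each pair rewrites as $-e_{k-1}(\hat g_{\ell j})+g_\ell g_j\,e_{k-2}(\hat g_{\ell j})$; summing over pairs with the subset counts $\sum_{\ell<j}e_{k-1}(\hat g_{\ell j})=\binom{n-k+1}{2}e_{k-1}(g)$ and $\sum_{\ell<j}g_\ell g_j e_{k-2}(\hat g_{\ell j})=\binom{k}{2}e_k(g)$ yields the cross term $c\bigl[\binom{k}{2}\Psi_n(k)-\binom{n-k+1}{2}\Psi_n(k-1)\bigr]$. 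Collecting, the coefficient of $\Psi_n(k)$ is $bk+c\binom{k}{2}=k(b+(k-1)c/2)$ and that of $\Psi_n(k-1)$ is $(1-a)(n-k+1)-c\binom{n-k+1}{2}=-(n-k+1)(a-1+(n-k)c/2)$, which is exactly \eqref{psi rel 4}.

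The main obstacle is the bookkeeping in this last step — in particular the pairwise symmetrization of the Vandermonde term, which is where the half-integer weights $(k-1)c/2$ and $(n-k)c/2$ are born, and verifying that $\Res_{x_\ell}\partial_{x_\ell}(\cdot)=0$ is legitimate in the iterated-Laurent-series ring in which $\Psi_n$ is computed. I note that \eqref{psi rel 4} is equivalent to the more symmetric form $(\star)$ of the introduction via the reflection $\Psi_n(k-1,a,b,c)=\Psi_n(n-k+1,b+1,a-1,c)$ (itself amounting, through Theorem~\ref{morris}, to the Morris shift $M_n(a,b,c)/M_n(b+1,a-1,c)=\prod_{j=1}^{n}\tfrac{b+(j-1)c/2}{a-1+(j-1)c/2}$); the residue computation above proves \eqref{psi rel 4} directly, bypassing $(\star)$, though $(\star)$ is the natural target of a hoped-for bijective proof, which remains open.
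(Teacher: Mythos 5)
Your proof is correct and follows essentially the same route as the paper: relations \eqref{psi rel 1}--\eqref{psi rel 3} are dispatched exactly as in the paper's proof (via Proposition~\ref{phi m}, the Contraction Lemma~\ref{psi contraction}, and the interpretation of Theorem~\ref{psi kpf}(i)), and relation \eqref{psi rel 4} is proved by the same Baldoni--Vergne residue computation — the same multiplier up to symmetrization, the same logarithmic derivative of $U$, and the same pairwise cancellation identities (your $\tfrac{x_\ell g_j - x_j g_\ell}{x_j-x_\ell}=g_\ell g_j$ and $\tfrac{x_\ell}{x_j-x_\ell}+\tfrac{x_j}{x_\ell-x_j}=-1$ are precisely the paper's equations \eqref{2 leq j leq k} and \eqref{j>k}). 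The one genuine difference is organizational but pleasant: by summing the already-symmetric multipliers $x_\ell\,e_{k-1}(\hat g_\ell)$ over $\ell$ instead of (anti)symmetrizing the single monomial $(1-x_1)g_1g_2\cdots g_k$ over $\mathfrak S_n$, you avoid the paper's case split between odd $c$ (antisymmetric $U$) and even $c$ (symmetric $U$).
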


\begin{proof}
The first relation follows immediately from the bijections in Proposition \ref{phi m}. The second relation follows by applying Lemma \ref{psi contraction} to the left-hand side, which turns the equation into $\Psi_{n-1}(n-1,c+1,b,c) = \Psi_{n-1}(0,c,b+1,c),$ which follows from the first relation. The third relation follows from applying the interpretation of Theorem~\ref{psi kpf}~(i) to the left-hand side of \eqref{psi rel 3}, which gives $a_i = a - 1 +c(n-1) = 0$ for all $i \in [n].$ Hence there is only one flow where the flow along each edge is zero. Lastly we prove the fourth relation. This is the sole relation we prove algebraically. 

Let $U := \prod_{i=1}^n (1-x_i)^{-b}x_i^{-a} \prod_{1 \leq i < j \leq n} (x_j-x_i)^{-c},$ and let $P_k := k!(n-k)![t^k]\prod_{i=1}^k (1+t\frac{x_i}{1-x_i}),$ where $[t^k]\prod_{i=1}^k (1+t\frac{x_i}{1-x_i}) = e_k(\frac{x_i}{1-x_i})$ is the $k$th elementary symmetric function in $\frac{x_i}{1-x_i}.$ We have that
\begin{align}
    & \frac{\partial}{\partial x_1}(1-x_1)\frac{x_1}{1-x_1}\cdots\frac{x_k}{1-x_k}U \\
    & = \left(b\frac{x_1}{1-x_1}\cdots\frac{x_k}{1-x_k} +(1-a)\frac{x_2}{1-x_2}\cdots\frac{x_k}{1-x_k}+c(1-x_1)\frac{x_1}{1-x_1}\cdots\frac{x_k}{1-x_k}\sum_{j=2}^n \frac{1}{x_j-x_1}\right)U. \notag
\end{align}

If $c$ is odd, then $U$ is antisymmetric. Anti-symmetrizing over $\mathfrak S_n$ gives:
\begin{align}
    & \sum_{w \in \mathfrak S_n} (-1)^w w \cdot \left(\frac{\partial}{\partial x_1}(1-x_1)\frac{x_1}{1-x_1}\frac{x_2}{1-x_2}\cdots\frac{x_k}{1-x_k}U\right) \\  
    \label{antisymmetrizing} & = bP_kU + (1-a)P_{k-1}U + c \sum_{w \in \mathfrak S_n} w \cdot \left((1-x_1)\frac{x_1}{1-x_1}\frac{x_2}{1-x_2}\cdots\frac{x_k}{1-x_k}\sum_{j=2}^n \frac{1}{x_j-x_1}\right)U.
\end{align}
To evaluate the sum, we seek pairings of summands that reduce easily. Consider when $w$ is the identity permutation. Then for each summand in $\sum_{j=2}^n \frac{1}{x_1-x_j}$, for $2 \leq j \leq k,$ we see that

\begin{equation}\label{2 leq j leq k}
    \frac{(1-x_1)x_1x_j}{(x_j-x_1)(1-x_1)(1-x_j)} + \frac{(1-x_j)x_1x_j}{(x_1-x_j)(1-x_1)(1-x_j)} = \frac{x_1x_j}{(1-x_1)(1-x_j)}.
\end{equation}

On the other hand, for $j > k,$
\begin{equation}\label{j>k}
    \frac{(1-x_1)x_1}{(x_j-x_1)(1-x_1)} + \frac{(1-x_j)x_j}{(x_1-x_j)(1-x_j)} = -1.
\end{equation}

Thus for each $w \in \mathfrak S_n$ and $j \in [2,n],$ we pair the summand $$w \cdot \left((1-x_1)\frac{x_1}{1-x_1}\frac{x_2}{1-x_2}\cdots\frac{x_k}{1-x_k}\cdot \frac{1}{x_j-x_1}\right)
$$ with the summand obtained by taking $w$ and transposing $w(1)$ and $w(j).$ Hence we duplicate the sum and simplify with equations \eqref{2 leq j leq k} and \eqref{j>k}:
\begin{align*}
    & 2\sum_{w \in \mathfrak S_n} w \cdot \left((1-x_1)\frac{x_1}{1-x_1}\frac{x_2}{1-x_2}\cdots\frac{x_k}{1-x_k}\sum_{j=2}^n \frac{1}{x_j-x_1}\right)U \\
    & = \sum_{w \in \mathfrak S_n} w \cdot \left((k-1) \frac{x_1}{1-x_1}\frac{x_2}{1-x_2}\cdots \frac{x_k}{1-x_k} - (n-k)\frac{x_2}{1-x_2}\cdots \frac{x_k}{1-x_k}\right) U\\
    & = (k-1) P_kU - (n-k)P_{k-1}U.
\end{align*}
Thus, the expression in equation \eqref{antisymmetrizing} simplifies to:
\begin{multline}
\sum_{w \in \mathfrak S_n} (-1)^w w \cdot \left(\frac{\partial}{\partial x_1}(1-x_1)\frac{x_1}{1-x_1}\frac{x_2}{1-x_2}\cdots\frac{x_k}{1-x_k}U\right) = \\  
 bP_kU + (1-a)P_{k-1}U - \frac{c}{2}(n-k)P_{k-1}U + \frac{c}{2}(k-1)P_kU.
\end{multline}
Since the residue of a partial derivative of an analytic function is always zero, taking the residues of the terms allows setting the equation to 0: 

\begin{equation*}
0\,=\, b\Res_xP_kU + (1-a)\Res_xP_{k-1}U - \frac{c}{2}(n-k)\Res_xP_{k-1}U + \frac{c}{2}(k-1)\Res_xP_kU.
\end{equation*}

By definition of $\Psi(\cdot),$ we have that $\Res_x P_kU = k!(n-k)!\Psi_n(k,a,b,c),$ which gives:

\begin{equation*}
 (b+(k-1)\frac{c}{2})k!(n-k)!\Psi_{n}(k,a,b,c) = (a-1+(n-k)\frac{c}{2})(k-1)!(n-k+1)!\Psi_{n}(k-1,a,b,c).
\end{equation*}
Simplifying gives relation  \eqref{psi rel 4} for odd $c.$ When $c$ is even, $U$ is symmetric, so symmetrizing over $\mathfrak S_n$ gives:
\begin{align*}
    & \sum_{w \in \mathfrak S_n} w \cdot (\frac{\partial}{\partial x_1}(1-x_1)\frac{x_1}{1-x_1}\frac{x_2}{1-x_2}\cdots\frac{x_k}{1-x_k}U)\\
    & = bP_kU + (1-a)P_{k-1}U + c \sum_{w \in \mathfrak S_n} w \cdot \left((1-x_1)\frac{x_1}{1-x_1}\frac{x_2}{1-x_2}\cdots\frac{x_k}{1-x_k}\sum_{j=2}^n \frac{1}{x_j-x_1}\right)U,
\end{align*}

which is essentially identical to when $c$ is odd, and the proof follows verbatim.
\end{proof}

\subsection{Closed Formula for \texorpdfstring{$\Psi_n(k,a,b,c)$}{Psi_n(k,a,b,c)}}

Our proof for the closed formula of $\Psi_n(k,a,b,c)$ follows the recurrence approach used by Baldoni-Vergne \cite[p. 10]{BV} (see also \cite[Prop. 3.11]{tesler}) using the recurrences proven in the previous section.

\begin{lemma} \label{psi uniquely defined}
The relations \eqref{psi rel 1}-\eqref{psi rel 4} uniquely determine the function $\Psi_n(k,a,b,c)$.
\end{lemma}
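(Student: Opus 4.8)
The plan is to show that the four relations \eqref{psi rel 1}--\eqref{psi rel 4} pin down $\Psi_n(k,a,b,c)$ by induction on the pair $(n,a)$ ordered lexicographically (with $n$ primary), using relation \eqref{psi rel 4} to sweep across all values of $k$ once $n$ and $a$ are fixed. The key observation is that \eqref{psi rel 4} expresses $\Psi_n(k,a,b,c)$ in terms of $\Psi_n(k-1,a,b,c)$ with the \emph{same} parameters $n,a,b,c$: since the coefficient $k(b+(k-1)c/2)$ on the left is strictly positive for $1\le k\le n$ (as $b\ge 1$ and $c\ge 0$), we can solve for $\Psi_n(k,a,b,c)$ in terms of $\Psi_n(k-1,a,b,c)$. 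Hence, for fixed $n,a,b,c$, knowing the single value $\Psi_n(0,a,b,c)$ determines all of $\Psi_n(k,a,b,c)$ for $k=1,\ldots,n$ by iterating \eqref{psi rel 4}. So the problem reduces to showing that the base values $\Psi_n(0,a,b,c)$ are determined.

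\textbf{Reducing the base values.} First I would handle the base of the induction on $n$. For $n=1$ (or more generally once we reach a graph with a single internal vertex), I expect the relations to collapse to an explicitly computable quantity; in particular relation \eqref{psi rel 3}, $\Psi_n(0,1,b,0)=1$, supplies a concrete anchor when $c=0$. For the inductive step on $a$, the idea is to express $\Psi_n(0,a,b,c)$ in terms of data with strictly smaller parameters. Relation \eqref{psi rel 1} gives $\Psi_n(0,a-1,b+1,c)=\Psi_n(n,a,b,c)$, and the right-hand side is already determined from $\Psi_n(0,a,b,c)$ via the sweep \eqref{psi rel 4}; reading this the other way, $\Psi_n(0,a,b,c)$ and $\Psi_n(0,a-1,b+1,c)$ are linked, allowing descent in $a$ while increasing $b$. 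Thus a downward induction on $a$ (with $b$ compensating) reduces every $\Psi_n(0,a,b,c)$ to the case $a=1$, namely $\Psi_n(0,1,b,c)$.

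\textbf{Reducing $n$.} The remaining task is to determine $\Psi_n(0,1,b,c)$. Here relation \eqref{psi rel 2}, $\Psi_n(n-1,1,b,c)=\Psi_{n-1}(0,c,b+1,c)$, is the crucial tool: its right-hand side lives at $n-1$ internal vertices and is determined by the inductive hypothesis on $n$. Since the left-hand side $\Psi_n(n-1,1,b,c)$ is, by the sweep \eqref{psi rel 4} run with $a=1$, a nonzero rational multiple of the base value $\Psi_n(0,1,b,c)$ (the product of the ratios from \eqref{psi rel 4} is a finite product of strictly positive factors, hence nonzero), we can invert this relation to solve for $\Psi_n(0,1,b,c)$ in terms of quantities at $n-1$. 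This completes the descent: every value is ultimately expressed through the anchor \eqref{psi rel 3}.

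\textbf{Main obstacle.} The delicate point is bookkeeping: one must verify that the combined descent on $(n,a)$ is well-founded and that at no stage does one divide by a vanishing coefficient. The positivity of $k(b+(k-1)c/2)$ for $1\le k\le n$ and of $(n-k+1)(a-1+(n-k)c/2)$ needs the standing hypotheses $a,b,n\ge 1$ and $c\ge 0$; I would check that the parameter values arising in the descent (in particular the shifted arguments $a-1$, $b+1$, and $c$ appearing on the right of \eqref{psi rel 1} and \eqref{psi rel 2}) stay within the allowed ranges so that these coefficients remain positive and the induction never appeals to an undefined instance. Organizing the simultaneous induction so that \eqref{psi rel 1} (descent in $a$) and \eqref{psi rel 2} (descent in $n$) interleave cleanly, terminating at \eqref{psi rel 3}, is the heart of the argument; once the well-foundedness is set up, each individual step is an algebraic inversion of a relation with a nonzero leading coefficient.
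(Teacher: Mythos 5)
Your plan follows essentially the same route as the paper's proof: use \eqref{psi rel 4} to sweep $k$ upward (valid because the left coefficient $k(b+(k-1)c/2)$ is positive for $b\ge1$, $c\ge0$), use \eqref{psi rel 1} to descend in $a$ down to $a=1$, use \eqref{psi rel 2} to descend in $n$, and anchor at \eqref{psi rel 3}. Your attention to which coefficients get inverted (the factors $a-1+(n-k)\frac{c}{2}$ are nonzero whenever $a\ge2$, and the sweep at $a=1$ has positive factors $(n-k)\frac{c}{2}$ precisely when $c\ge1$) is exactly the bookkeeping the paper's Cases 1 and 3 carry out.

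However, there is a genuine gap at the base of your induction on $n$, namely $n=1$ with $c\ge1$. Your descent terminates at values $\Psi_1(0,1,b,c)$, and you only say you ``expect the relations to collapse'' there --- but they do not. Relation \eqref{psi rel 3} applies only when $c=0$; relation \eqref{psi rel 2} at $n=1$ would equate $\Psi_1(0,1,b,c)$ with a quantity $\Psi_0(0,c,b+1,c)$ lying outside the domain on which the relations are asserted; relation \eqref{psi rel 4} at $n=1$, $a=1$ only forces $\Psi_1(1,1,b,c)=0$; and relation \eqref{psi rel 1} at $n=1$, $a=1$ involves the out-of-range parameter $a-1=0$. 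So no combination of the four relations determines $\Psi_1(0,1,b,c)$ when $c\ge1$, and your induction has no valid base case there. The paper closes exactly this hole with its Case 2, which invokes the \emph{definition} of $\Psi$ rather than the relations: for $n=1$ the factor $\prod_{1\le i<j\le n}(x_j-x_i)^{-c}$ is an empty product, hence $\Psi_1(k,a,b,c)=\Psi_1(k,a,b,0)$, and the $c=0$ argument (your \eqref{psi rel 3} anchor) then applies. To complete your proof you need this observation, or an equivalent convention such as defining $\Psi_0\equiv 1$ and asserting \eqref{psi rel 2} for $n=1$; it cannot be extracted from the four relations alone.
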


\begin{proof}

\noindent \textit{Case 1:} Consider if $c = 0, n \geq 1,$ and $a\geq 1.$ To compute $\Psi_n(k,a,b,0),$ we repeatedly apply equation \eqref{psi rel 4} to increment $k$ until $k = n,$ at which point we apply equation \eqref{psi rel 1}. Thus $\Psi_n(k,a,b,c)$ reduces to calculating $\Psi_n(0,a-1,b+1,c)$:

\[
\xymatrix{
\Psi_n(k,a,b,0)  \ar[r]^(.45){\eqref{psi rel 4}} & \Psi_n(k+1,a,b,0) 
\ar@{-->}[r]^{\eqref{psi rel 4}^*}
& \Psi_n(n,a,b,0)
\ar[r]^(.4){\eqref{psi rel 1}} & \Psi_n(0,a-1,b+1,0).
}
\]

By iterating this recursion, we see this is equivalent to calculating $\Psi_n(0,1,a+b-1,0).$ By equation \eqref{psi rel 3}, this is equal to 1.

\noindent \textit{Case 2:} Consider if $c \geq 1, n = 1,$ and $a \geq 1.$ Since $\prod_{1 \leq i < j \leq n} (x_j-x_i)^{-c}$ is the empty product, this is equivalent to when $c = 0,$ which implies that $$\Psi_{1}(k,c,a+b+c(n-2),c) = \Psi_{1}(k,c,a+b+c(n-2),0).$$

This reduces to Case 1.

\noindent \textit{Case 3:} Consider if $c \geq 1, n \geq 2,$ and $a \geq 1.$ Similar to in Case 2, to compute $\Psi_n(k,a,b,c)$ we repeatedly apply equation \eqref{psi rel 4} to increment $k$ until $k = n,$ at which point we apply equation \eqref{psi rel 1}. Thus $\Psi_n(k,a,b,c)$ reduces to calculating $\Psi_n(0,a-1,b+1,c).$ 

\[
\xymatrix{
\Psi_n(k,a,b,c)  \ar[r]^(.45){\eqref{psi rel 4}} & \Psi_n(k+1,a,b,c) 
\ar@{-->}[r]^(.55){\eqref{psi rel 4}^*}
& \Psi_n(n,a,b,c)
\ar[r]^(.4){\eqref{psi rel 1}} & \Psi_n(0,a-1,b+1,c).
}
\]

We iterate this recursion until $a = 1,$ at which point we reduce the calculation to finding $\Psi_n(0,1,a+b-1,c).$ Now we again increment $k$ by \eqref{psi rel 4} until $k = n-1$. Applying equation \eqref{psi rel 2} reduces the calculation to finding $\Psi_{n-1}(0, c,a+b, c).$
\[
\xymatrix{
\Psi_n(k,1,a+b-1,c) \ar[r]^(.46){\eqref{psi rel 4}} & \Psi_n(k+1,1,a+b-1,c) 
\ar@{-->}[r]^{\eqref{psi rel 4}^*}
& \Psi_n(n-1,1,a+b-1,c)
\ar[r]^(.53){\eqref{psi rel 2}} & \Psi_{n-1}(0,c,a+b,c).
}
\]
We now repeatedly apply the above two cycles until we reduce $n$ to 1, in which case we reduce the computation to $\Psi_1(0,c,a+b+c(n-2),c).$ Since $n = 1,$ this becomes Case 2.

Since all cases eventually reduce to case 1, the result follows.
\end{proof}

Using the fact that the relations \eqref{psi rel 1}-\eqref{psi rel 4} uniquely define $\Psi_n(k,a,b,c),$ we now prove our explicit product formula.

\begin{proof}[Proof of Theorem~\ref{thm:psi product}]
By Lemma \ref{psi uniquely defined}, it is sufficient to show the product formula for  $\Psi_n(k,a,b,c)$ in equation \eqref{eq: product formula Psi} satisfies the relations \eqref{psi rel 1}-\eqref{psi rel 4}.
To show equation \eqref{psi rel 1}, recall that:
\begin{equation*}
    M_n(a-1,b+1,c) = \prod_{j=0}^{n-1} \frac{\Gamma(a+b-1+(n-1+j)\frac{c}{2})\Gamma(\frac{c}{2}+1)}{\Gamma(a-1+j\frac{c}{2})\Gamma(b+1+j\frac{c}{2})\Gamma((j+1)\frac{c}{2}+1)}.
\end{equation*}

Recall that $\Gamma(x+1) = x\Gamma(x).$ Hence, $\Gamma(a-1+j\frac{c}{2}) = \Gamma(a+j\frac{c}{2})/(a-1+j\frac{c}{2}),$ and $\Gamma(b+1+j\frac{c}or{2}) = (b+j\frac{c}{2})\Gamma(b+j\frac{c}{2}).$ Substituting gives \begin{equation*}
    M_n(a-1,b+1,c) = M_n(a,b,c) \prod_{j=0}^{n-1} \frac{a-1+j\frac{c}{2}}{b+j\frac{c}{2}} = M_n(a,b,c)\prod_{j=1}^{n} \frac{a-1+(n-j)\frac{c}{2}}{b+(j-1)\frac{c}{2}}.
\end{equation*}

To show equation \eqref{psi rel 2} from \eqref{psi rel 1}, it is sufficient to show the product formula satisfies Lemma \ref{psi contraction}.

First we show $M_n(1,b,c) = M_{n-1}(c+1,b,c).$ To do so, consider the ratio \begin{equation*}
    \frac{M_n(1,b,c)}{M_{n-1}(c+1,b,c)} = \frac{1}{n} \cdot \frac{\Gamma(b+(n-1)\frac{c}{2})\Gamma(\frac{c}{2})}{\Gamma(b+(n-1)\frac{c}{2})\Gamma(\frac{c}{2}n)} \cdot \frac{\Gamma(\frac{c}{2}n+1)}{\Gamma(\frac{c}{2}+1)\Gamma(1)}
\end{equation*}

Since $\Gamma(x+1) = x\Gamma(x),$ the above ratio simplifies to 1, and the result follows.

Using the above equality, it is sufficient to show that $\binom{n}{k}\prod_{j=1}^k (n-j) = \binom{n-1}{k}\prod_{j=1}^k (n+1-j).$ Both sides of the equation simplify to $\prod_{j=1}^k (n+1-j)(n-j)/j,$ thus proving relation the product formula satisfies Lemma \ref{psi contraction} and hence equation \eqref{psi rel 2}.

To show equation \eqref{psi rel 3}, recall that since $k = 0,$ we have that $\binom{n}{k}\prod_{j=1}^k\frac{a-1+(n-j)\frac{c}{2}}{b+(j-1)\frac{c}{2}} =1$. Then
\begin{equation*}
    \Psi_n(0,1,b,0) = M_n(1,b,0) = 1.
\end{equation*}

To show equation \eqref{psi rel 4}, note that 

\begin{equation*}
    \frac{\Psi_n(k,a,b,c)}{\Psi_n(k-1,a,b,c)} = \frac{\binom{n}{k}}{\binom{n}{k-1}} \cdot \frac{a-1 + (n-k)\frac{c}{2}}{b+(k-1)\frac{c}{2}} = \frac{n-k+1}{k} \cdot \frac{a-1 + (n-k)\frac{c}{2}}{b+(k-1)\frac{c}{2}}. 
\end{equation*}

Rearranging gives the desired recurrence relation, and the result follows.
\end{proof}

We also compute the following special cases of $\Psi_n(k,a,b,c),$ which generalize the special cases of $M_n(a,b,c)$ computed in Section~\ref{sec: Morris}.

\begin{corollary} \label{psi special cases}
The constant term $\Psi_n(k,a,b,c)$ satisfies the following:
\begin{align} 
\label{eq: case ka11} \Psi_n(k,a,1,1) & = \frac{1}{n+2(a-1)}\binom{n}{k}\binom{n+2(a-1)}{k+1}M_n(a,1,1)\\
\label{eq: case k1b1} \Psi_n(k,1,b,1) & = \binom{n-1}{k}\binom{n}{k}\binom{k+2b-1}{k}^{-1}M_n(1,b,1)\\
\Psi_n(k,1,1,c) & = N(n,k+1)\prod_{j=1}^{k}\frac{c(j+1)}{c(j-1)+2}M_n(1,1,c).
\end{align}
\end{corollary}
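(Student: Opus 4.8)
The plan is to deduce all three identities as specializations of the closed product formula of Theorem~\ref{thm:psi product}, i.e.\ of
$\Psi_n(k,a,b,c) = \binom{n}{k}M_n(a,b,c)\prod_{j=1}^{k}\frac{a-1+(n-j)c/2}{b+(j-1)c/2}$,
so that each case reduces to simplifying a product of linear factors into a single binomial coefficient. No new structural input beyond \eqref{eq: product formula Psi} is needed; the work is entirely in matching the half-integer shifts $(n-j)c/2$ and $(j-1)c/2$ against the integer-indexed binomials appearing on the right-hand sides.

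For \eqref{eq: case ka11} I would set $b=c=1$. Clearing the halves turns each factor into $\frac{2(a-1)+n-j}{j+1}$, so writing $m:=n+2(a-1)$ the product becomes $\prod_{j=1}^{k}\frac{m-j}{j+1}=\frac{(m-1)!}{(k+1)!\,(m-k-1)!}=\frac{1}{m}\binom{m}{k+1}$, which upon multiplying by $\binom{n}{k}M_n(a,1,1)$ gives exactly the stated formula. For \eqref{eq: case k1b1} I would set $a=c=1$, so the constant term in each numerator vanishes and the product becomes $\prod_{j=1}^{k}\frac{n-j}{2b+j-1}$. Here the numerator is the falling factorial $(n-1)\cdots(n-k)=k!\binom{n-1}{k}$ and the denominator is the rising factorial $(2b)(2b+1)\cdots(2b+k-1)=\frac{(2b+k-1)!}{(2b-1)!}$; recognizing $\frac{(2b-1)!}{(2b+k-1)!}=\frac{1}{k!}\binom{k+2b-1}{k}^{-1}$ collapses the product to $\binom{n-1}{k}\binom{k+2b-1}{k}^{-1}$, and the extra $\binom{n}{k}$ from \eqref{eq: product formula Psi} yields the claim.

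For the third identity I would set $a=b=1$, which makes the denominators $c(j-1)+2$ of the product coincide with those already present in the target formula. After cancelling these common denominators the identity reduces to the purely combinatorial statement $\binom{n}{k}\prod_{j=1}^{k}(n-j)=N(n,k+1)\prod_{j=1}^{k}(j+1)$, which I would verify using $N(n,k+1)=\frac{1}{n}\binom{n}{k+1}\binom{n}{k}$ together with $\binom{n}{k+1}(k+1)!=\frac{n!}{(n-k-1)!}$: both sides equal $\binom{n}{k}\frac{(n-1)!}{(n-k-1)!}$. The only mild obstacle is bookkeeping—keeping the rising/falling factorials straight and spotting the Narayana rewriting in the last case—rather than any genuine difficulty, since each case is a one-line consequence of \eqref{eq: product formula Psi}.
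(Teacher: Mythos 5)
Your proposal is correct and follows essentially the same route as the paper: both specialize the product formula of Theorem~\ref{thm:psi product} to each case and collapse the resulting products of linear factors into the stated binomial coefficients (the paper records exactly the three product identities you derive, including the Narayana rewriting $N(n,k+1)=\frac{1}{n}\binom{n}{k}\binom{n}{k+1}$ in the third case). All three of your simplifications check out.
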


\begin{proof}
We manipulate the product formula given in Theorem \eqref{thm:psi product} to obtain the desired relations. Note that
\begin{align*}
    \prod_{j=1}^{k}\frac{2(a-1)+(n-j)}{j+1} &= \frac{1}{n+2(a-1)}\binom{n+2(a-1)}{k+1} \\
    \prod_{j=1}^{k}\frac{n-j}{j+2b-1} &= \frac{\binom{n-1}{k}}{\binom{k+2b-1}{k}} \\
    \prod_{j=1}^{k}\frac{(n-j)c}{2+(j-1)c} &= \frac{1}{n}\binom{n}{k+1}\prod_{j=1}^{k}\frac{c(j-1)+2c}{c(j-1)+2}.
\end{align*}
The results follows from substitution.
\end{proof}

\begin{remark}
Note that unlike $M_n(a,b,c)$ (see Proposition~\ref{M symmetry}), in most cases $\Psi_n(k,a,b,c)\neq \Psi_n(k,b,a,c).$ For instance, by plugging into equations \eqref{eq: case ka11} and \eqref{eq: case k1b1}, we obtain $\Psi_2(1,2,1,1) = 6,$ whereas $\Psi_2(1,1,2,1) = 1$. Instead, we have the following symmetry.
\end{remark}

\begin{proposition}[Symmetry of $\Psi_n$]\label{prop: psi symmetry}
For positive integers $a, b, n$ and nonnegative integers $c$ and $k \leq n,$
$$\Psi_n(k,a,b,c) = \Psi_n(n-k,b+1,a-1,c).$$
\end{proposition}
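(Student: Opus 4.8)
The plan is to prove the symmetry $\Psi_n(k,a,b,c) = \Psi_n(n-k,b+1,a-1,c)$ directly from the explicit product formula in Theorem~\ref{thm:psi product}, since once we have a closed form the symmetry reduces to a routine but instructive manipulation of the two sides. First I would write out both sides using \eqref{eq: product formula Psi}. The left-hand side is
\[
\Psi_n(k,a,b,c) = \binom{n}{k}M_n(a,b,c)\prod_{j=1}^{k}\frac{a-1+(n-j)\frac{c}{2}}{b+(j-1)\frac{c}{2}},
\]
and the right-hand side, after substituting $k \mapsto n-k$, $a \mapsto b+1$, $b \mapsto a-1$, becomes
\[
\Psi_n(n-k,b+1,a-1,c) = \binom{n}{n-k}M_n(b+1,a-1,c)\prod_{j=1}^{n-k}\frac{b+(n-j)\frac{c}{2}}{a-1+(j-1)\frac{c}{2}}.
\]

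The first simplification is that $\binom{n}{k} = \binom{n}{n-k}$, so the binomial factors match immediately. The second key input is the symmetry of the Morris constant term, Corollary~\ref{M symmetry}, which gives $M_n(b+1,a-1,c) = M_n(a-1,b+1,c)$; combining this with the factorization $M_n(a-1,b+1,c) = M_n(a,b,c)\prod_{j=1}^{n}\frac{a-1+(n-j)\frac{c}{2}}{b+(j-1)\frac{c}{2}}$ already derived in the proof of Theorem~\ref{thm:psi product} (from relation \eqref{psi rel 1}) lets me express $M_n(b+1,a-1,c)$ in terms of $M_n(a,b,c)$ times a full product over $j=1,\dots,n$. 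After this substitution, proving the symmetry reduces to verifying the purely rational identity
\[
\prod_{j=1}^{k}\frac{a-1+(n-j)\frac{c}{2}}{b+(j-1)\frac{c}{2}} \;=\; \left(\prod_{j=1}^{n}\frac{a-1+(n-j)\frac{c}{2}}{b+(j-1)\frac{c}{2}}\right)\prod_{j=1}^{n-k}\frac{b+(n-j)\frac{c}{2}}{a-1+(j-1)\frac{c}{2}}.
\]

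The main obstacle, and the heart of the argument, is checking this last product identity, which I expect to handle by reindexing. The idea is to split the length-$n$ product on the right into its first $k$ factors and its last $n-k$ factors, and then to match the last $n-k$ factors of the $M$-product against the $\prod_{j=1}^{n-k}\frac{b+(n-j)\frac{c}{2}}{a-1+(j-1)\frac{c}{2}}$ term via the substitution $j \mapsto n+1-j$. Concretely, running $j$ from $k+1$ to $n$ in $\prod \frac{a-1+(n-j)\frac{c}{2}}{b+(j-1)\frac{c}{2}}$ and substituting $j = n+1-\ell$ turns the numerator factors $a-1+(n-j)\frac{c}{2}$ into $a-1+(\ell-1)\frac{c}{2}$ and the denominator factors $b+(j-1)\frac{c}{2}$ into $b+(n-\ell)\frac{c}{2}$, so these cancel precisely against the reciprocal product appearing on the right. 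What survives on the right is exactly the first-$k$-factor portion $\prod_{j=1}^{k}\frac{a-1+(n-j)\frac{c}{2}}{b+(j-1)\frac{c}{2}}$, matching the left-hand side.

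I should flag one subtlety worth a remark: the telescoping argument above treats the factors formally as nonzero, which is valid generically; to conclude equality of the integer-valued quantities $\Psi_n(k,a,b,c)$ for all admissible parameters one invokes that both sides are given by the product formula as honest rational (indeed, by Theorem~\ref{thm:psi product}, integer) quantities, so the identity of rational functions forces the claimed equality. An alternative, more conceptual route that I would mention as a remark is to derive the symmetry directly from relation \eqref{psi rel 4}: iterating the reformulated relation \eqref{eq: key}, namely $k(b+(k-1)c/2)\Psi_n(k,a,b,c) = (n-k+1)(a-1+(n-k)c/2)\Psi_n(n-k+1,b+1,a-1,c)$, together with a second application pairs $\Psi_n(k,a,b,c)$ with $\Psi_n(n-k,b+1,a-1,c)$ and, after the Morris symmetry is used to close the loop, yields the stated identity without reference to the full product formula; however, the product-formula proof is cleaner and self-contained, so I would present that as the primary argument.
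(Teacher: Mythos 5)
Your approach is genuinely different from the paper's: the paper proves this proposition geometrically, using Theorem~\ref{psi kpf}~(ii) to write $\Psi_n(k,a,b,c)=\sum_{S\in\binom{[n]}{k}}\vol\mathcal{F}_{k_{n+2}^{a,b,c}(S)}$ and then observing that the reverse of the graph $k_{n+2}^{a,b,c}(S)$ is $k_{n+2}^{b+1,a-1,c}([n]\setminus S)$, so that Lemma~\ref{lem: reverse flow polytopes} matches the two collections of polytopes term by term. Your product-formula computation is correct as far as it goes---the reindexing $j\mapsto n+1-j$ does make the two sides agree---but it has a genuine gap at $a=1$, which is an admissible and important case of the statement. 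When $a=1$ the right-hand side is $\Psi_n(n-k,b+1,0,c)$, whose third argument is $0$, and every ingredient you invoke---Theorem~\ref{thm:psi product}, Corollary~\ref{M symmetry}, and the factorization $M_n(a-1,b+1,c)=M_n(a,b,c)\prod_{j=1}^{n}\frac{a-1+(n-j)c/2}{b+(j-1)c/2}$---is stated and proved only for positive values of that parameter. The degeneration is real, not merely formal: at $a=1$ one has $M_n(b+1,0,c)=0$ (a $\Gamma(0)$ appears in a denominator of \eqref{eq:morris}), while your product $\prod_{j=1}^{n-k}\frac{b+(n-j)c/2}{a-1+(j-1)c/2}$ acquires a zero denominator at $j=1$, giving a $0\cdot\infty$ form. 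Your ``identity of rational functions'' remark does not repair this, because the problem is not generic vanishing of factors: the quantity $\Psi_n(n-k,b+1,0,c)$ is simply not known to be computed by the product formula at that boundary point, so there is no established expression to continue. Note that $a=1$ is exactly the CRY/Narayana case (Corollary~\ref{Narayana refinement}), so it cannot be discarded; a check at $n=1$, $a=1$, $k=0$ shows both sides equal $1$, but your argument never certifies this.

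Your suggested alternative route is also circular: in the paper, the relation \eqref{eq: key} is obtained from relation \eqref{psi rel 4} precisely by applying Proposition~\ref{prop: psi symmetry}, so it cannot serve as an input to a proof of that proposition. To close the gap you could either extend Theorem~\ref{thm:psi product} to allow the third parameter to equal $0$ (which requires a separate limiting or direct argument in the Gamma factors), treat $a=1$ by hand (for instance via Lemma~\ref{psi contraction} and the constant-term definition), or---most cleanly---adopt the paper's graph-reversal argument, which handles all $a\geq 1$ uniformly with no case distinction.
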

\begin{proof}
By Theorem~\ref{psi kpf}~(ii), $\Psi_n(k,a,b,c)$ is the volume of the interior disjoint polytopes $\{ \mathcal{F}_{k_{n+2}^{a,b,c}(S)} \mid S \in \binom{[n]}{k}\}$. Note that the reverse graph of $k_{n+2}^{a,b,c}(S)$ is the graph $k_{n+2}^{b+1,a-1,c}([n]\setminus S)$. So by Lemma~\ref{lem: reverse flow polytopes} we have that  $\mathcal{F}_{k_{n+2}^{a,b,c}(S)} \equiv \mathcal{F}_{k_{n+2}^{b+1,a-1,c}([n]\setminus S)}$ and both polytopes have the same volume. Thus we have 
\begin{align*}
    \Psi_n(k,a,b,c) \,=\, \sum_{S\in \binom{[n]}{k}} \vol \mathcal{F}_{k_{n+2}^{a,b,c}(S)} = \sum_{S \in \binom{[n]}{n-k}} \vol \mathcal{F}_{k_{n+2}^{b+1,a-1,c}(S)}  = \Psi_{n}(n-k,b+1,a-1,c). 
\end{align*}
\end{proof}

\begin{remark}
Combining Proposition~\ref{prop: psi symmetry} with  Corollary~\ref{cor:M refinement} yields $M_n(a,b+1,c)=M_n(b+1,a,c)$, recovering the symmetry of the Morris identity (Corollary~\ref{M symmetry}).
\end{remark}

As a corollary, we also have the following special case when $a = b= c = 1.$ 

\begin{corollary}\label{Narayana refinement}
For positive integer $n$ and nonnegative integer $k \le n$, 
\[
\Psi_n(k,1,1,1) =  N(n,k+1)C_{n-1}\cdots C_1.
\]
\end{corollary}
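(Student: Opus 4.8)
The plan is to derive Corollary~\ref{Narayana refinement} as the special case $a=b=c=1$ of the general product formula in Theorem~\ref{thm:psi product}. First I would substitute $a=b=c=1$ directly into equation~\eqref{eq: product formula Psi}, which gives
\[
\Psi_n(k,1,1,1) = \binom{n}{k} M_n(1,1,1) \prod_{j=1}^{k} \frac{(n-j)/2}{1+(j-1)/2} = \binom{n}{k} M_n(1,1,1) \prod_{j=1}^{k} \frac{n-j}{j+1}.
\]
By Theorem~\ref{thm:CRY} we have $M_n(1,1,1) = C_{n-1}\cdots C_1$, so the whole task reduces to the purely combinatorial identity $\binom{n}{k}\prod_{j=1}^{k}\frac{n-j}{j+1} = N(n,k+1)$, where $N(n,k+1) = \frac{1}{n}\binom{n}{k+1}\binom{n}{k}$.

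The key step is therefore to verify this binomial identity, which I expect to be routine. I would evaluate the product $\prod_{j=1}^{k}(n-j) = (n-1)(n-2)\cdots(n-k) = \frac{(n-1)!}{(n-1-k)!}$ and $\prod_{j=1}^{k}(j+1) = 2\cdot 3\cdots (k+1) = (k+1)!$, so that $\prod_{j=1}^k \frac{n-j}{j+1} = \frac{(n-1)!}{(n-1-k)!\,(k+1)!}$. Multiplying by $\binom{n}{k} = \frac{n!}{k!\,(n-k)!}$ and then comparing against $\frac{1}{n}\binom{n}{k+1}\binom{n}{k} = \frac{1}{n}\cdot\frac{n!}{(k+1)!(n-k-1)!}\cdot\frac{n!}{k!(n-k)!}$ should confirm equality after canceling factorials; the factor $\frac{1}{n}$ on the Narayana side matches the extra $n$ arising from rewriting $\binom{n}{k}\binom{n-1}{?}$ appropriately. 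There is no real obstacle here beyond bookkeeping of factorials.

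An alternative, perhaps cleaner, route I would consider is to avoid the product formula entirely and instead invoke the Kostant partition function interpretation: Theorem~\ref{psi kpf}~(i) specialized to $a=b=c=1$ recovers exactly the sum described in Theorem~\ref{conj 2 cry}~(i) with $k$ replaced suitably, where the Narayana number $N(n,k)$ already appears as the coefficient. The only subtlety is an index shift: in Theorem~\ref{conj 2 cry} the integer $k$ counts the values of $i$ with $a_i = i-1$ (the equalities), whereas in Theorem~\ref{psi kpf}~(i) the parameter $k$ counts the strict inequalities, so the $n-k$ equalities of $\Psi_n(k,1,1,1)$ correspond to the $k' = n-k$ equalities in Theorem~\ref{conj 2 cry}, giving $N(n,n-k)\prod C_i$. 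I would then reconcile $N(n,n-k)=N(n,k+1)$ using the symmetry $N(n,j)=N(n,n+1-j)$ of the Narayana numbers. The main thing to get right in either approach is this index alignment between the two enumerative descriptions, but since both ultimately reduce to the same elementary Narayana identity I would simply present the substitution into \eqref{eq: product formula Psi} as the shortest proof.
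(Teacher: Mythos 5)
Your proof is correct and takes essentially the same route as the paper: the paper deduces the corollary from equation \eqref{eq: case ka11} of Corollary~\ref{psi special cases} at $a=1$, which is itself just the specialization of the product formula in Theorem~\ref{thm:psi product} that you carry out directly, and both come down to the same factorial identity $\binom{n}{k}\prod_{j=1}^{k}\frac{n-j}{j+1}=N(n,k+1)$ together with $M_n(1,1,1)=C_1\cdots C_{n-1}$. Your remark on the index shift in the alternative Kostant partition function route (reconciling $N(n,n-k)=N(n,k+1)$ via Narayana symmetry) is also correct.
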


\begin{proof}
This follows from \eqref{eq: case ka11} when $a=1.$\end{proof}

\begin{corollary}
Theorem~\ref{psi kpf} and Theorem~\ref{thm:psi product} imply Theorem 1.5.
\end{corollary}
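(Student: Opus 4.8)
The plan is to specialize both Theorem~\ref{psi kpf} and Theorem~\ref{thm:psi product} to $a=b=c=1$, and to read the two parts of Theorem~\ref{conj 2 cry} off the two parts of Theorem~\ref{psi kpf}. The quantitative bridge is the evaluation $\Psi_n(k,1,1,1)=N(n,k+1)\prod_{i=1}^{n-1}C_i$ (Corollary~\ref{Narayana refinement}), which I would extract directly from the product formula \eqref{eq: product formula Psi}: at $a=b=c=1$ the trailing product collapses to $\prod_{j=1}^{k}\frac{n-j}{j+1}$, and since $M_n(1,1,1)=\prod_{i=1}^{n-1}C_i$ by Theorem~\ref{thm:CRY}, the claim reduces to the elementary identity $\binom{n}{k}\prod_{j=1}^{k}\frac{n-j}{j+1}=\frac{1}{n}\binom{n}{k}\binom{n}{k+1}=N(n,k+1)$. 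This is a routine factorial manipulation and poses no difficulty.

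For part~(i), I would take the $a=b=c=1$ specialization of Theorem~\ref{psi kpf}~(i): $\Psi_n(k,1,1,1)$ is the sum of $K_{k_{n+2}}(0,a_1,\dots,a_n,-\sum_j a_j)$ over net flow vectors with $a_i\le i-1$ for all $i$ and with $a_i=i-1$ holding for exactly $n-k$ indices. Substituting $k\mapsto n-k$ turns this into the sum over vectors with exactly $k$ equalities, which is precisely the index set in Theorem~\ref{conj 2 cry}~(i); the value is then $\Psi_n(n-k,1,1,1)=N(n,n-k+1)\prod_{i=1}^{n-1}C_i$, and the Narayana symmetry $N(n,j)=N(n,n+1-j)$ rewrites $N(n,n-k+1)=N(n,k)$, matching the product $N(n,k)\prod_{i=1}^{n-1}C_i$ asserted in Theorem~\ref{conj 2 cry}.

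For part~(ii), I would specialize Theorem~\ref{psi kpf}~(ii): since $k_{n+2}=k_{n+2}^{1,1,1}$, it gives $\Psi_n(k,1,1,1)=\sum_{S\in\binom{[n]}{k}}\vol\mathcal{F}_{k_{n+2}(S)}$, which identifies the interior-disjoint collection $\{\mathcal{F}_{k_{n+2}(S)}\mid S\in\binom{[n]}{k}\}$ of Theorem~\ref{conj 2 cry}~(ii), with total volume again supplied by the evaluation above. The only delicate point, and the one I would be most careful about, is reconciling the two indexing conventions: the argument $k$ of $\Psi_n$ records the number of \emph{strict} inequalities (equivalently $|S|$), while the parameter in Theorem~\ref{conj 2 cry} records the number of \emph{equalities}. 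Once this is tracked through the substitution $k\mapsto n-k$ together with the complementation $S\mapsto[n]\setminus S$ and the Narayana symmetry above, both the Kostant partition function description and the volume description in Theorem~\ref{conj 2 cry} follow immediately from Theorems~\ref{psi kpf} and~\ref{thm:psi product}, completing the derivation.
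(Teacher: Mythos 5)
Your proof is correct and takes essentially the same route as the paper's own (very terse) proof: specialize the product formula of Theorem~\ref{thm:psi product} at $a=b=c=1$ to get $\Psi_n(k,1,1,1)=N(n,k+1)\prod_{i=1}^{n-1}C_i$ (the paper's Corollary~\ref{Narayana refinement}) and then read off both parts of Theorem 1.5 from the two interpretations in Theorem~\ref{psi kpf}. The only differences are minor: you make the equalities-versus-strict-inequalities index shift explicit (via $k\mapsto n-k$, complementation of $S$, and Narayana symmetry), a point the paper's proof glosses over, and you do not need the paper's extra citation of Corollary~\ref{a11 refinement}, which is not required for the statement as given.
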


\begin{proof}
Theorem~\ref{thm:psi product} implies Corollary~\ref{Narayana refinement}. Applying Corollary~\ref{a11 refinement} and the Kostant partition function interpretation in Theorem~\ref{psi kpf} thus gives Theorem 1.5.
\end{proof}

\begin{remark} 
Corollary~\ref{Narayana refinement} and Lemma \ref{psi polytope} in the case that $a = b= c = 1$ give a coarser version of the refinement provided by M\'esz\'aros \cite[Thm. 13]{Mproduct}.
\end{remark}

\section{The Baldoni-Vergne refinement of \texorpdfstring{$M_n(a,b,c)$}{M_n(a,b,c)}} \label{sec: phi}

To prove the Morris identity, Baldoni-Vergne defined the generating function  
\begin{equation*} \label{phi'}
\phi'_n(k,a,b,c) := k!(n-k)!e_k\prod_{i=1}^n (1-x_i)^{-b}x_i^{-a+1} \prod_{1 \leq i < j \leq n} (x_j - x_i)^{-c},
\end{equation*}
where $e_k = [t^k]\prod_{i=1}^n(1+tx_i)$ is the $k$th elementary symmetric polynomial. They proved several recurrence relations that computed the constant term $\Phi'_n(k,a,b,c) := \CT_x \phi'_n(k,a,b,c),$ which implies the Morris identity when $k = 0.$

\begin{theorem}[Baldoni-Vergne \cite{BV}] \label{thm:BV}
For positive integer $n$ and nonnegative integer $k,a,b,c,$ with $a + b \ge 2,$ the constant term $\Phi'_n(k,a,b,c)$ is given by the formula
\begin{equation} \label{eq: prod Phi}
    \Phi'_n(k,a,b,c) \,=\, n!\cdot  M_n(a,b,c) \prod_{j=1}^k \frac{a-1 + (n-j)\frac{c}{2}}{a+b-2 + (2n-j-1)\frac{c}{2}}.
\end{equation}
\end{theorem}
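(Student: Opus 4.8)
The plan is to prove Theorem~\ref{thm:BV} by the same recurrence strategy used for $\Psi_n(k,a,b,c)$. That is, I will establish four defining relations for $\Phi'_n(k,a,b,c)$ analogous to \eqref{psi rel 1}--\eqref{psi rel 4}, argue that these relations uniquely determine the function (mimicking Lemma~\ref{psi uniquely defined}), and finally verify that the claimed product formula \eqref{eq: prod Phi} satisfies all four. The overall architecture is thus identical to the proof of Theorem~\ref{thm:psi product}; the main new content is in deriving the four relations for $\Phi'_n$, since the generating function here involves $e_k$ evaluated at $x_i$ directly rather than at $\frac{x_i}{1-x_i}$, so the analogue of relation \eqref{psi rel 4} will have a different algebraic form.

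\textbf{Deriving the recurrences.} First I would record the boundary/base relations. When $k=0$ the factor $e_0 = 1$, so $\Phi'_n(0,a,b,c) = n!\,M_n(a,b,c)$, which is the $k=0$ case of \eqref{eq: prod Phi}; this plays the role of \eqref{psi rel 3}'s base together with the Morris identity itself. For the analogue of the key relation \eqref{psi rel 4}, I would adapt the Baldoni--Vergne differential trick already carried out in the proof of Lemma~\ref{psi recurrences}: set $U := \prod_i (1-x_i)^{-b} x_i^{-a} \prod_{i<j}(x_j-x_i)^{-c}$ and compute
\[
\frac{\partial}{\partial x_1}\, x_1 x_2 \cdots x_k\, U,
\]
then (anti)symmetrize over $\mathfrak{S}_n$ according to the parity of $c$, pair the summands $w$ and $w\cdot(1\,j)$ exactly as in \eqref{2 leq j leq k} and \eqref{j>k}, and use $\Res_{x_1}\frac{\partial}{\partial x_1}(\cdots) = 0$. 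The key simplification is that $x_1\cdots x_k$ is already a monomial in the elementary symmetric setting, so the pairing identities will be cleaner than in the $\frac{x_i}{1-x_i}$ case and should produce a linear relation between $\Phi'_n(k,a,b,c)$ and $\Phi'_n(k-1,a,b,c)$ with the ratio $\frac{(n-k+1)(a-1+(n-k)\frac{c}{2})}{k\,(a+b-2+(2n-k-1)\frac{c}{2})}$ dictated by \eqref{eq: prod Phi}.

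\textbf{Uniqueness and verification.} With the four relations in hand, the uniqueness argument proceeds along the lines of Lemma~\ref{psi uniquely defined}: repeatedly apply the step relation to push $k$ up to its extreme value, apply the boundary relation to drop to the $k=0$ case, and induct downward on $a$ (and on $n$ via the contraction analogue) until reaching a base case governed by the Morris identity. Checking that \eqref{eq: prod Phi} satisfies each relation is then a routine $\Gamma$-function manipulation using $\Gamma(x+1)=x\Gamma(x)$, exactly as in the proof of Theorem~\ref{thm:psi product}; in particular the step relation reduces to verifying the telescoping ratio $\frac{\Phi'_n(k,a,b,c)}{\Phi'_n(k-1,a,b,c)}=\frac{a-1+(n-k)\frac{c}{2}}{a+b-2+(2n-k-1)\frac{c}{2}}$, which is immediate from the product. \textbf{The hard part will be} the derivation of the step relation analogous to \eqref{psi rel 4}: getting the pairing bookkeeping and the resulting coefficients of $\Phi'_n(k)$ and $\Phi'_n(k-1)$ exactly right, especially tracking the contribution of the Vandermonde factor $\prod(x_j-x_i)^{-c}$ through the symmetrization, since that is where the $a+b-2+(2n-k-1)\frac{c}{2}$ denominator (as opposed to the simpler $b+(k-1)\frac{c}{2}$ in \eqref{psi rel 4}) must emerge.
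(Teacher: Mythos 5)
Your overall architecture (defining relations, uniqueness, verification of the product formula) is indeed the right one: it is precisely the Baldoni--Vergne strategy, and in fact the paper does not reprove Theorem~\ref{thm:BV} at all but cites it, recording in Section~\ref{sec: phi} the five relations of \cite[Thm.~27]{BV} that uniquely determine $\Phi'_n$. The genuine gap is at exactly the step you flag as hard: the function you propose to differentiate is the wrong one, and with it the computation does not close. With $U=\prod_{i}(1-x_i)^{-b}x_i^{-a}\prod_{i<j}(x_j-x_i)^{-c}$ one gets
\begin{equation*}
\frac{\partial}{\partial x_1}\bigl[x_1x_2\cdots x_k\,U\bigr]
=\Bigl[(1-a)\,x_2\cdots x_k\;+\;b\,\tfrac{x_1}{1-x_1}\,x_2\cdots x_k\;+\;c\,x_1x_2\cdots x_k\textstyle\sum_{j\ge 2}\tfrac{1}{x_j-x_1}\Bigr]U.
\end{equation*}
The middle term is fatal: $\tfrac{x_1}{1-x_1}x_2\cdots x_k\,U$ carries $x_1^{-a+1}(1-x_1)^{-b-1}$ in the variable $x_1$ but $x_i^{-a}(1-x_i)^{-b}$ in the others, so its residue is a mixed constant term that is neither $\Phi'_n(k,\cdot)$ nor $\Phi'_n(k-1,\cdot)$; it does not vanish and is not removed by (anti)symmetrization. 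Worse, in your setting the analogues of \eqref{2 leq j leq k} and \eqref{j>k} give $0$ for $2\le j\le k$ and $-x_2\cdots x_k$ for $j>k$, so the Vandermonde pairing produces no $e_k$-term at all: the resulting identity cannot relate $\Phi'_n(k)$ to $\Phi'_n(k-1)$, which is the whole point of the step relation.

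The fix --- and this is exactly the device used both by Baldoni--Vergne and in the paper's proof of \eqref{psi rel 4} in Lemma~\ref{psi recurrences}, where the differentiated function is $(1-x_1)\tfrac{x_1}{1-x_1}\tfrac{x_2}{1-x_2}\cdots\tfrac{x_k}{1-x_k}U$ --- is to include a factor $(1-x_1)$: differentiate $x_1(1-x_1)\,x_2\cdots x_k\,U$. Then the $b$-term becomes $b\,x_1x_2\cdots x_k\,U$, an honest $e_k$-monomial; the derivative of $x_1(1-x_1)$ combines with the $a$-term to give $(1-a)x_2\cdots x_k+(a+b-2)x_1\cdots x_k$; and the pairing over $w$ and $w\cdot(1\,j)$ yields $x_1\cdots x_k$ for $2\le j\le k$ and $x_2\cdots x_k(x_1+x_j-1)$ for $j>k$. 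Collecting coefficients ($(k-1)+2(n-k)=2n-k-1$ copies of the $e_k$-monomial and $-(n-k)$ copies of the $e_{k-1}$-monomial) and taking residues gives exactly relation (5) of \cite[Thm.~27]{BV}, namely $\bigl(a+b-2+\tfrac{c}{2}(2n-k-1)\bigr)\Phi'_n(k,a,b,c)=\bigl(a-1+\tfrac{c}{2}(n-k)\bigr)\Phi'_n(k-1,a,b,c)$. Two smaller inaccuracies: the ratio you first write, $\tfrac{(n-k+1)(a-1+(n-k)c/2)}{k(a+b-2+(2n-k-1)c/2)}$, is the ratio for the normalized $\Phi_n$, not for $\Phi'_n$ (the factor $(n-k+1)/k$ is absorbed by the $k!(n-k)!$ in the definition of $\Phi'_n$), which contradicts the correct $\Phi'$ ratio you state later; and the boundary relations for $\Phi'_n$ are not verbatim those of $\Psi_n$ --- here $\Phi'_n(n,a,b,c)=\Phi'_n(0,a-1,b,c)$ and $\Phi'_n(n-1,1,b,c)=\Phi'_{n-1}(0,c,b,c)$ with no shift $b\mapsto b+1$, and Baldoni--Vergne also need the extra relation $\Phi'_1(k,0,b,c)=0$ for the uniqueness induction to terminate, since here $a$ is decremented without compensation.
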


Interestingly, the Baldoni-Vergne constant term does not generalize the refinement of $M_n(1,1,1)$ given by Theorem~\ref{conj 2 cry} which helped motivate our new refinement of $M_n(a,b,c)$ in Section~\ref{sec: psi}. To more naturally interpret this constant term with Kostant partition functions, we scale $\Phi'_n(k,a,b,c)$.

\begin{definition}
We define the following modification of the Baldoni-Vergne constant term:
\begin{equation} \label{eq:definition phi}
    \Phi_n(k,a,b,c) \,:=\, \CT_x [t^k] \prod_{i=1}^n (1-x_i)^{-b}x_i^{-a+1}(1+tx_i)\prod_{1 \leq i < j \leq n} (x_j - x_i)^{-c}.
\end{equation}
\end{definition}
Equivalently, we have $\Phi_n(k,a,b,c) := \Phi'_n(k,a,b,c)/(k!(n-k)!).$ We now show the main result of this . section.

\begin{theorem} \label{phi kpf}
$\Phi_n(k,a,b,c)$ is the sum of Kostant partition functions $K_{k_{n+2}^{a,b,c}}(0, a_1, \dots, a_{n}, -\sum_{j=1}^n a_j)$ such that  $a-2+c(i-1) \leq a_i \leq a -1+ c(i-1),$ with $a_i = a-1 + c(i-1)$ for exactly $n-k$ values of $i\in [n]$. 
\end{theorem}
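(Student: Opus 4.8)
The plan is to mirror the proof of Theorem~\ref{psi kpf}~(i), exploiting that the only difference between $\Phi_n(k,a,b,c)$ and $M_n(a,b,c)$ is the extra factor $[t^k]\prod_{i=1}^n(1+tx_i)=e_k(x_1,\ldots,x_n)$, the $k$th elementary symmetric polynomial. First I would expand $e_k=\sum_{S\in\binom{[n]}{k}}\prod_{i\in S}x_i$ and use linearity of the constant term to write
\[
\Phi_n(k,a,b,c) = \sum_{S\in\binom{[n]}{k}} \CT_x \Big(\prod_{i\in S} x_i\Big) \prod_{i=1}^n (1-x_i)^{-b} x_i^{-a+1}\prod_{1\le i<j\le n}(x_j-x_i)^{-c}.
\]
As in Section~\ref{sec: kpf}, I would specialize the generating function \eqref{eq: kpf gen function kabc} by discarding the source factor (the net flow at $0$ is zero) and setting $x_{n+1}=1$, so that extracting the coefficient of $\prod_i x_i^{a_i}$ recovers $K_{k_{n+2}^{a,b,c}}(0,a_1,\ldots,a_n,-\sum_i a_i)$. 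The unmodified factor (the $S=\varnothing$ term) extracts the coefficient at $a_i=a-1+c(i-1)$, giving $M_n(a,b,c)$ exactly as in Theorem~\ref{thm:ckm}.

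The key step is to track what multiplication by $\prod_{i\in S}x_i$ does to the extracted coefficient. Whereas in \eqref{eq: CT to coeff extraction} the factor $\tfrac{x_i}{1-x_i}=x_i+x_i^2+\cdots$ contributed $\sum_{r\ge 1}[x_i^{-r}]$ and hence \emph{strictly} lowered the net flow at $i$, here the single monomial $x_i$ contributes only $\CT_{x_i} x_i\,(\cdot)=[x_i^{-1}](\cdot)$, which lowers the net flow at $i$ by exactly one. Thus for each $S$ the corresponding term equals $K_{k_{n+2}^{a,b,c}}(0,a_1^S,\ldots,a_n^S,-\sum_i a_i^S)$, where $a_i^S=a-2+c(i-1)$ for $i\in S$ and $a_i^S=a-1+c(i-1)$ for $i\notin S$. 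Summing over $S\in\binom{[n]}{k}$ then produces precisely the net-flow vectors with $a_i\in\{a-2+c(i-1),\,a-1+c(i-1)\}$ for every $i$ and with $a_i=a-1+c(i-1)$ for exactly $n-k$ indices, which is the asserted family.

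The one point requiring care—rather than a genuine obstacle—is verifying that the inequality $a-2+c(i-1)\le a_i\le a-1+c(i-1)$ in the statement is equivalent to this binary choice: since the interval contains exactly the two integers $a-2+c(i-1)$ and $a-1+c(i-1)$, the condition that $a_i$ equals the upper endpoint for exactly $n-k$ indices forces it to equal the lower endpoint for exactly $k$ indices, and the assignment $S\mapsto(a_i^S)_i$ is a bijection between $\binom{[n]}{k}$ and these vectors. Unlike the analysis of $\Psi_n(k,a,b,c)$, no algebraic identity in the spirit of relation \eqref{psi rel 4} is needed: the whole argument is the combinatorial coefficient-extraction bookkeeping, with the contrast between $x_i$ and $\tfrac{x_i}{1-x_i}$ being the only substantive difference from the proof of Theorem~\ref{psi kpf}~(i).
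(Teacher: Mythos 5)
Your proof is correct and takes essentially the same approach as the paper's: both compare $\Phi_n(k,a,b,c)$ with $M_n(a,b,c)$, note that the extra factor $[t^k]\prod_{i=1}^n(1+tx_i)$ replaces $\CT_{x_i}$ with $[x_i^{-1}]$ at exactly $k$ vertices, and interpret this as lowering the net flow at those vertices by exactly one. Your explicit expansion of $e_k$ over $k$-subsets and the bijection check at the end simply spell out the bookkeeping that the paper's terser argument leaves implicit.
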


\begin{proof}
 Recall that specializing the generating function of equation \eqref{eq: kpf gen function kabc} gives the Morris constant term
\begin{equation*}
    M_n(a,b,c) = \CT_x \prod_{i=1}^n (1-x_i)^{-b}x_i^{-a+1}\prod_{1 \leq i < j \leq n} (x_j - x_i)^{-c}.
\end{equation*}
We see that $\Phi_n(k,a,b,c)$ has an additional term $[t^k]\prod_{i=1}^n(1+tx_i),$ which replaces $\CT_{x_i}$ with $[x_i^{-1}]$ for exactly $k$ values of $i,$ so this is equivalent to decreasing the net flow at these vertices by 1.

Recall from equation \eqref{eq: Mn is kpf and volume} that $M_n(a,b,c) = K_{k^{a,b,c}_{n+2}}(0,a_1, a_2, \dots, a_{n}, -\sum_{j=1}^n a_j)$, with $a_i = a-1+c(i-1)$ for all $i \in [n].$ Decreasing the net flow by one at exactly $k$ vertices gives the desired Kostant partition function interpretation. 

There are $k!(n-k)!$ ways of distinguishing the vertices based on their net flow, we also obtain the combinatorial interpretation for $\Phi'_n(k,a,b,c),$ and the result follows.
\end{proof}

\begin{remark}
We note that $\Phi_n(k,a,b,c)$ does not seem to have a refinement similar to Corollary~\ref{cor:M refinement}. Summing $\Phi(\cdot)$ over $k$ removes all restrictions on $t$ terms from the expression, giving:
\begin{equation*}
    \sum_{k=0}^n\Phi_n(k,a,b,c) \,:=\, \CT_x \prod_{i=1}^n (1-x_i)^{-b}x_i^{-a+1}(1+x_i)\prod_{1 \leq i < j \leq n} (x_j - x_i)^{-c},
\end{equation*}
for which a simplified expression is not immediate.
\end{remark}

We now give the recurrence relations used by Baldoni-Vergne \cite{BV} to prove Theorem~\ref{thm:BV}. These relations served as the inspiration for the relations for $\Psi_n(k,a,b,c)$ in Section~\ref{sec: psi}. 

\begin{proposition}[{Baldoni-Vergne \cite[Thm. 27]{BV}}] The constant term $\Phi'_n(k,a,b,c)$ is uniquely determined by the following relations:
\begin{enumerate}[label=(\arabic*)]
    \item $\Phi'_n(n,a,b,c) = \Phi'_n(0,a-1,b,c)$
    \item $\Phi'_n(n-1,1,b,c) = \Phi'_{n-1}(0,c,b,c)$
    \item $\Phi'_n(0,1,b,0) = r!$
    \item $\Phi'_1(k,0,b,c) = 0$
    \item $(a+b-2+\frac{c}{2}(2n-k-1))\Phi'_n(k,a,b,c) = (a-1+\frac{c}{2}(n-k))\Phi'_n(k-1,a,b,c)$.
\end{enumerate}
\end{proposition}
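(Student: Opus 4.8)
The plan is to follow the two-stage strategy that the paper already carries out for $\Psi_n(k,a,b,c)$ in Lemmas~\ref{psi recurrences} and~\ref{psi uniquely defined}: first I would verify that the constant term $\Phi'_n(k,a,b,c)$ genuinely satisfies each of the five relations, and then I would show that relations (1)--(5) admit a terminating reduction to base cases, so that any function obeying them must equal $\Phi'_n$. It is convenient to work throughout with the normalized constant term $\Phi_n(k,a,b,c)=\Phi'_n(k,a,b,c)/(k!(n-k)!)$ of \eqref{eq:definition phi} and to restore the factorial factors only at the end.

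For the relations other than (5), I would argue combinatorially via the generating function and Theorem~\ref{phi kpf}. Relation (1) is immediate: at $k=n$ one has $[t^n]\prod_i(1+tx_i)=\prod_i x_i$, so $\Phi_n(n,a,b,c)=\CT_x\prod_i(1-x_i)^{-b}x_i^{-a+2}\prod_{i<j}(x_j-x_i)^{-c}=M_n(a-1,b,c)=\Phi_n(0,a-1,b,c)$, and both $\Phi'_n(n,\cdot)$ and $\Phi'_n(0,\cdot)$ equal $n!$ times this. Relation (3) is the base case $c=0$: the integrand factors over the variables and each $(1-x_i)^{-b}$ contributes constant term $1$, so $\Phi_n(0,1,b,0)=M_n(1,b,0)=1$ and $\Phi'_n(0,1,b,0)=n!$. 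Relation (4) is the degenerate case $n=1$, $a=0$: there is no Vandermonde factor and the required net flow $a-1+c(i-1)=-1$ at the single internal vertex is infeasible, so the Kostant partition function in Theorem~\ref{phi kpf} vanishes. For relation (2) I would first prove a contraction identity for $\Phi$ in the spirit of Lemma~\ref{psi contraction}, namely $\Phi_n(k,1,b,c)=\Phi_{n-1}(k,c+1,b,c)$, by contracting the edge $(0,1)$ in $k_{n+2}^{1,b,c}$ (merging vertex $1$ into the source and absorbing its $c$ outgoing edges, which raises the source multiplicity to $c+1$) exactly as in \eqref{varphi kpf}. Specializing to $k=n-1$ makes the factorial prefactor trivial, giving $\Phi'_n(n-1,1,b,c)=\Phi'_{n-1}(n-1,c+1,b,c)$, and then relation (1) applied in dimension $n-1$ lowers $a$ from $c+1$ to $c$ and resets $k$ to $0$, yielding $\Phi'_{n-1}(0,c,b,c)$.

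The heart of the argument, and the step I expect to be the main obstacle, is relation (5), which I would establish algebraically from the vanishing $\Res_{x_1}\tfrac{\partial}{\partial x_1}(\cdots)=0$ of the residue of a partial derivative, mirroring the proof of \eqref{psi rel 4}. Writing $U=\prod_i(1-x_i)^{-b}x_i^{-a}\prod_{i<j}(x_j-x_i)^{-c}$, so that $\Res_x e_k(x)\,U=\Phi_n(k,a,b,c)$ and therefore $k!(n-k)!\Res_x e_k(x)\,U=\Phi'_n(k,a,b,c)$, I would differentiate a suitable prefactor times $U$ with respect to $x_1$. The logarithmic derivatives of $(1-x_1)^{-b}$, of the power of $x_1$, and of the Vandermonde $\prod_j(x_j-x_1)^{-c}$ split the result into three groups of terms; (anti)symmetrizing over $\mathfrak S_n$ according to the parity of $c$ collapses the first two groups into multiples of $e_kU$ and $e_{k-1}U$, while the third group $c\sum_{w}w\cdot\big(\cdots\sum_{j\ge 2}\tfrac{1}{x_j-x_1}\big)U$ is evaluated by pairing each summand with its image under the transposition swapping $x_1$ and $x_j$, split into the cases $2\le j\le k$ and $j>k$ just as in the proof of \eqref{psi rel 4}. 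Taking residues sets the total sum to zero and, after identifying $\Res_x e_kU$ with $\Phi_n(k,a,b,c)$ up to the factorial normalization, yields relation (5) with coefficients $a+b-2+\tfrac{c}{2}(2n-k-1)$ and $a-1+\tfrac{c}{2}(n-k)$. The delicate point is choosing the prefactor so that $x_1$ is distinguished from $x_2,\dots,x_k$, since it is precisely this asymmetry that prevents the paired Vandermonde terms from cancelling outright, and then tracking how the three groups combine into exactly these two coefficients.

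For uniqueness I would reduce any $\Phi'_n(k,a,b,c)$ to the base cases exactly as in Lemma~\ref{psi uniquely defined}, with relation (5) now playing the role of \eqref{psi rel 4}. Relation (5) lets me raise $k$ to $n$, relation (1) then lowers $a$ by one and resets $k=0$, and iterating this pair reduces to $a=1$; there relation (5) raises $k$ to $n-1$ and relation (2) contracts $n$ to $n-1$ (restoring $a=c$), and repeating forces $n=1$, where the empty Vandermonde collapses everything to the $c=0$ situation settled by relation (3), with relation (4) accounting for the degenerate leaves at $a=0$. Since each branch of this recursion terminates at a base case, relations (1)--(5) determine $\Phi'_n(k,a,b,c)$ uniquely, completing the proof.
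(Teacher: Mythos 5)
Your proposal is correct, and it takes essentially the approach the paper itself indicates: the paper states this proposition by citation to Baldoni--Vergne and remarks that all but the last relation can be proved combinatorially in a manner nearly identical to Lemma~\ref{psi recurrences}, which is exactly what you do for (1)--(4) (via Theorem~\ref{phi kpf} and a contraction lemma analogous to Lemma~\ref{psi contraction}), while your residue/antisymmetrization argument for (5) with prefactor $(1-x_1)x_1x_2\cdots x_k$ does go through (the pairs with $j>k$ now contribute $x_2\cdots x_k(x_1+x_j-1)$, feeding both the $e_k$ and $e_{k-1}$ terms and producing the coefficient $a+b-2+\tfrac{c}{2}(2n-k-1)$), and your uniqueness reduction mirrors Lemma~\ref{psi uniquely defined}.
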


\begin{remark}
One can give combinatorial proofs for all but the last relation in a nearly identical manner to our combinatorial proofs for $\Psi_n(k,a,b,c)$ in Lemma~\ref{psi recurrences}.
\end{remark}

\section{Final remarks} \label{sec: remarks}

In this paper we investigated a symmetry and a refinement of the Morris identity with several combinatorial interpretations, including a certain sum of Kostant partition functions and the volume of a collection of polytopes. We demonstrated how these collections of polytopes subdivide the graph $k_{n+2}^{a,b,c},$ and proved a product formula for our refinement. We now give some possible avenues for future exploration.

\subsection{The recurring appearance of Aomoto's integral}

The original Morris constant term identity \cite{Morr} strongly resembles the Selberg integral, and the two identities are known to be equivalent (see \cite{Morr} and \cite{Forrester}). Interestingly, the product formula for the Baldoni-Vergne refinement of the Morris identity (equation \eqref{eq: prod Phi})  greatly resembles Aomoto's integral \cite{aomoto}. However, the relationship between these two seemingly related identities is as of yet unclear. Intriguingly, Zeilberger also cites Aomoto's integral in his proof of Conjecture 2 of Chan-Robbins-Yuen, and while we did not see an immediate application of Aomoto's integral in our proof of the product formula of $\Psi_n(k,a,b,c),$ this seems to suggest these refinements of the Morris identity are in some way related to Aomoto's generalization of the Selberg integral. For a recent bijective proof of the Selberg integral see  \cite{haupt}.

\subsection{Combinatorial proof of the Morris identity}

This paper provides multiple combinatorial proofs of recurrence relations for $\Psi_n(k,a,b,c)$ that could contribute to a combinatorial proof of the Morris constant term identity, and therefore, the volume formula for the Chan-Robbins-Yuen polytope. With the approach of this paper, the only remaining step is to give a combinatorial proof for equation \eqref{psi rel 4}. A combinatorialization of our algebraic proof of \eqref{psi rel 4}, or a new combinatorial proof altogether, would certainly be interesting. We also note that equation \eqref{psi rel 4} can be rewritten as
\[\left(kb+\binom{k}{2}c\right)\cdot \Psi_{n}(k,a,b,c) = \left((n-k+1)(a-1)+\binom{n-k+1}{2}c\right)\cdot \Psi_{n}(k-1,a,b,c).\]
where the extra factors on the left-hand and right-hand sides appear to be selecting certain edges of the graph $k_{n+2}^{a,b,c}$. Applying the identity in Proposition~\ref{prop: psi symmetry} to the right-hand side, the expression becomes
$$\left(kb+\binom{k}{2}c\right)\cdot \Psi_{n}(k,a,b,c) = \left((n-k+1)(a-1)+\binom{n-k+1}{2}c\right)\cdot \Psi_{n}(n-k+1,b+1,a-1,c).$$
where both sides have very similar structures. Given that a combinatorial proof of the Morris identity has been elusive and would serve immediately as a combinatorial proof for the volume formula of the Chan-Robbins-Yuen polytope. See \cite{benedetti2019combinatorial,jang2019volumes,yip2019fuss} for combinatorial proofs of volumes of flow polytopes $\mathcal{F}_G$ for other graphs $G$.

\subsection{Volume of polytopes with different net flow vectors}

In Section~\ref{sec:vol proof}, we presented a new recursive proof of Theorem~\ref{thm:vol-kpf}. Generalizing Theorem~\ref{thm:vol-kpf} is the following theorem of Baldoni-Vergne-Lidskii.
\begin{theorem}[Baldoni-Vergne-Lidskii \cite{BVkpf}]\label{thm: lidskii}
Let $G$ be a connected digraph on vertex set $\{0,1, \ldots, n,n+1\}$ with $m$ edges directed $i \to j$ if $i < j$ and such that for $i \in \{0, 1, \ldots n\},$ there is at least one outgoing edge at vertex $i.$ Then for a fixed net flow vector $\mathbf{a} = (\sum_{j=1}^n a_i,-a_1, -a_2,\ldots,-a_{n+1}),\;a_i \in \mathbb Z_{\geq 0},$ it holds that
$$\vol \mathcal F_G(\mathbf{a}) = \sum_\mathbf{j} \binom{m-n-1}{j_0, \ldots, j_{n+1}} a_1^{j_1}\cdots a_{n+1}^{j_{n+1}} \cdot K_G(0,d_1-j_1,\ldots,d_{n+1}-j_{n+1}),$$
where $d_i = \indeg_G(i) - 1.$ and the sum is over compositions ${\bf j}$ of $m-n-1$ with $n+1$ parts.
\end{theorem}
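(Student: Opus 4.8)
The plan is to follow the recursive strategy of Section~\ref{sec:vol proof} almost verbatim, upgrading each ingredient from the distinguished net flow $(1,0,\ldots,0,-1)$ of Theorem~\ref{thm:vol-kpf} to an arbitrary net flow $\mathbf a$. Write $L_G(\mathbf a)$ for the right-hand side of the claimed identity, i.e. the sum over compositions $\mathbf j$ of $m-n-1$ of $\binom{m-n-1}{\mathbf j}\,\mathbf a^{\mathbf j}\,K_G(0,d_1-j_1,\ldots,d_{n+1}-j_{n+1})$ with $d_i=\indeg_G(i)-1$. I will show that $\vol\mathcal F_G(\mathbf a)$ and $L_G(\mathbf a)$ satisfy the \emph{same} additive recursion under the reduction rule \eqref{reduction1}--\eqref{reduction2}, and that they agree on a common base case; induction along a reduction tree then finishes the proof.

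First I would note that the subdivision of Proposition~\ref{reduction proposition} holds for an arbitrary net flow: decomposing $\mathcal F_{G_0}(\mathbf a)$ according to whether $f(i,j)\ge f(j,k)$ or $f(i,j)<f(j,k)$ still gives interior-disjoint pieces with $\mathcal F_{G_0}(\mathbf a)\equiv\mathcal F_{G_1}(\mathbf a)\cup\mathcal F_{G_2}(\mathbf a)$, hence $\vol\mathcal F_{G_0}(\mathbf a)=\vol\mathcal F_{G_1}(\mathbf a)+\vol\mathcal F_{G_2}(\mathbf a)$. The matching recursion $L_{G_0}(\mathbf a)=L_{G_1}(\mathbf a)+L_{G_2}(\mathbf a)$ is the clean part. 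Since $G_0,G_1,G_2$ share the same $m$ and $n$, the weights $\binom{m-n-1}{\mathbf j}$ and monomials $\mathbf a^{\mathbf j}$ are identical across the three graphs, so it suffices to verify, for each fixed $\mathbf j$, the single identity $K_{G_0}(0,\mathbf d^{G_0}-\mathbf j)=K_{G_1}(0,\mathbf d^{G_1}-\mathbf j)+K_{G_2}(0,\mathbf d^{G_2}-\mathbf j)$. This is exactly the general-net-flow form of Lemma~\ref{inductive step}: the bijection $\varphi_1$ preserves the net flow while $\varphi_2$ shifts it by $-e_j+e_k$, and because the reduction leaves the indegrees of $G_1$ unchanged ($\mathbf d^{G_1}=\mathbf d^{G_0}$) and changes those of $G_2$ by exactly $\mathbf d^{G_2}=\mathbf d^{G_0}-e_j+e_k$, the net-flow shift is absorbed precisely into the indegree change and $\mathbf j$ is the same in every term. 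Equivalently, on the Kostant generating functions of \eqref{eq: kpf gen function kabc} this is the partial-fraction identity $\frac{1}{(1-u)(1-v)}=\frac{1}{(1-u)(1-uv)}+\frac{v}{(1-v)(1-uv)}$ with $u=x_ix_j^{-1}$, $v=x_jx_k^{-1}$, the extra factor $v$ accounting for the shift.

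With both sides additive under reduction, I would apply the algorithm of Lemma~\ref{base case algorithm} to reduce $G$ to graphs $G'$ in which every internal vertex has outdegree $1$, keeping all pieces (for general $\mathbf a$ one can no longer discard an empty $\mathcal F_{G_1}$). It then remains to prove the base case $\vol\mathcal F_{G'}(\mathbf a)=L_{G'}(\mathbf a)$. Here the Kostant side is transparent: because the source carries net flow $0$ in $K_{G'}(0,\mathbf d^{G'}-\mathbf j)$, all edges out of $0$ are forced to be zero, and the outdegree-$1$ condition then determines every remaining edge flow by propagation, so $K_{G'}(0,\mathbf d^{G'}-\mathbf j)\in\{0,1\}$, equal to $1$ exactly when all propagated flows are nonnegative. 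Thus $L_{G'}(\mathbf a)=\sum_{\mathbf j\ \mathrm{feasible}}\binom{m-n-1}{\mathbf j}\mathbf a^{\mathbf j}$. On the volume side, the same outdegree-$1$ structure exhibits $\mathcal F_{G'}(\mathbf a)$ as the simplex $\{\,y\ge 0:\sum_\ell y_\ell=\sum_i a_i\,\}$ sliced by the demand inequalities, whose normalized volume is an iterated Beta-integral.

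I expect the base-case identity to be the main obstacle: matching the iterated-integral volume of this demand-sliced simplex with the weighted count of feasible compositions $\mathbf j$. In the case $n=1$ with $p$ edges $(0,1)$ and $q$ edges $(0,2)$ both sides evaluate to $\sum_{j_1=0}^{p-1}\binom{p+q-1}{j_1}a_1^{j_1}a_2^{\,p+q-1-j_1}$ and coincide, which is encouraging. For general $n$ I would run a secondary induction (on $n$, or on the outdegree of the source), peeling off the last sink and using flow conservation to shrink the integral and the feasibility region in tandem. The reduction step is combinatorially clean; it is the explicit volume polynomial of the base-case polytope and its identification with the Kostant generating polynomial that carry the real content.
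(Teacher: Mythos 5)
Your proposal targets a statement that the paper itself does not prove: Theorem~\ref{thm: lidskii} is quoted from Baldoni--Vergne \cite{BVkpf}, and the surrounding discussion in Section~\ref{sec: remarks} explicitly poses, as an open question, whether the recursive method of Section~\ref{sec:vol proof} extends to it --- observing that the inductive step generalizes but that ``new base cases with net flow vector $\mathbf{a}$'' would be needed. Your proposal is an attempt at exactly that open problem, and it succeeds precisely as far as the paper's remark predicts and no further. The inductive half of your argument is correct: the subdivision of Proposition~\ref{reduction proposition} does hold for arbitrary net flow $\mathbf{a}$, and the bijections $\varphi_1,\varphi_2$ of Lemma~\ref{inductive step} work verbatim for any prescribed net flow, with $\varphi_1$ preserving it and $\varphi_2$ shifting it by $-e_j+e_k$, which is exactly the change $\mathbf{d}^{G_2}=\mathbf{d}^{G_0}-e_j+e_k$ in indegrees; since $G_0,G_1,G_2$ share $m$ and $n$, this gives the termwise identity
\begin{equation*}
K_{G_0}(0,\mathbf{d}^{G_0}-\mathbf{j}) \;=\; K_{G_1}(0,\mathbf{d}^{G_1}-\mathbf{j}) + K_{G_2}(0,\mathbf{d}^{G_2}-\mathbf{j})
\end{equation*}
and hence $L_{G_0}(\mathbf{a})=L_{G_1}(\mathbf{a})+L_{G_2}(\mathbf{a})$; your partial-fraction reformulation is also correct.

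The genuine gap is the base case, which you yourself flag as ``the main obstacle'' but do not resolve. For a graph $G'$ with $\outdeg_{G'}(i)=1$ at every internal vertex and general $\mathbf{a}$, the required identity $\vol\mathcal{F}_{G'}(\mathbf{a})=L_{G'}(\mathbf{a})$ is nothing like the trivial $1=1$ of Lemma~\ref{base case}: the left side is the volume polynomial of a simplex sliced by the cascading nonnegativity constraints of the propagated flows, the right side is a $\binom{m-n-1}{\mathbf{j}}$-weighted sum of $0/1$ feasibility indicators, and their equality is a nontrivial multivariate polynomial identity in $a_1,\ldots,a_{n+1}$ --- essentially the Lidskii formula itself restricted to this family of graphs (which already contains, e.g., all graphs in which every internal vertex points only to the sink). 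Your evidence for it consists of one verification at $n=1$ and an unexecuted plan (``a secondary induction \ldots peeling off the last sink''), so the proposal reduces the theorem to a nontrivial special case rather than proving it. Two smaller points you should also address: first, your caution that for general $\mathbf{a}$ one can no longer discard degenerate pieces is unnecessary --- when a reduction deletes the last incoming edge of a vertex $v$, every Kostant term of $L_{G_2}$ vanishes because the prescribed net flow $\indeg_{G_2}(v)-1-j_v\le -1$ is negative at a vertex with no incoming edges, and the corresponding polytope piece has zero $(m-n-1)$-dimensional volume, so both sides are $0$ whether you keep or discard it; second, if you do keep all pieces, the leaves of the reduction tree are not only outdegree-one graphs but also graphs with such extra sources, so your base-case analysis must cover them (they contribute $0=0$, but this needs to be said).
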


In our proof in Section~\ref{sec:vol proof} of Theorem~\ref{thm:vol-kpf}, the map $\varphi$ on Kostant partition functions that we introduced is not specific to flow polytopes with net flow vector $(1,0,\ldots,0,-1)$. This means that the inductive step will not change significantly for a different net flow vector, and as such, it is worth investigating whether there is a simple recursive proof for Theorem~\ref{thm: lidskii} considering new base cases with net flow vector $\mathbf{a}$. Such a proof would provide a better understanding of how volumes of flow polytope and Kostant partition functions are refined by the subdivision lemma. See \cite{kapoor2019counting} for another recent proof of this more general volume formula.

\subsection{Dual graph of triangulations of \texorpdfstring{$\mathcal{F}_G$}{F_G}}
 In Section~\ref{sec: symmetry} we used the Danilov-Karzanov-Koshevoy (DKK) triangulation of flow polytopes in terms of cliques of routes. Given a triangulation of a polytope, it is of interest to study its dual graph. This is the graph whose vertices are the top-dimensional simplices connected by an edge if the pair of simplices have a common facet (see \cite[Ch. 1]{triangulations}). In our context the number of vertices of such dual graph gives the volume of the polytope. In \cite{vonBell-et-al}, the authors show that for certain planar graphs $G$ and for two framings called {\em length} and {\em planar}, the dual graphs of the DKK triangulations of $\mathcal{F}_G$ are isomorphic to a generalization of the {\em Tamari lattice (associahedron)} and certain principal order ideals in {\em Young's lattice}, respectively. It would be of interest to study the  dual graph with $M_n(a,b,c)$ vertices of the DKK triangulation of the flow polytopes $\mathcal{F}_{k_{n+2}^{a,b,c}}$ for the length and planar framing.

\subsection{Triangulations of flow polytopes}
In Section~\ref{sec: symmetry} we used the  DKK triangulation of the  flow polytope $\mathcal{F}_G$ of a framed graph $(G,\preceq)$ to obtain a bijection $\Theta_{G,\preceq}$ between the integer flows of $\mathcal{F}_G(0,d_1,\ldots)$ and of $\mathcal{F}_{G^r}(0,d_1,\ldots)$. This is related (see Example~\ref{ex: prod simplices}) and was motivated by work of Postnikov \cite[Section 12]{Pos}, who showed that a triangulation $\tau$ of \emph{root polytopes} $Q_H$ for bipartite graphs $H$ with vertices $\{1,\ldots,p\} \cup \{\overline{1},\ldots,\overline{q}\}$ give a bijetion $\phi_{\tau}$ between lattice points of two {\em trimmed generalized permutahedra} $P^-_H$ and $P^+_{H^*}$, where $H^*$ is the obtained by flipping $H$. Galashin, Nenashev, Postnikov \cite{GNP} studied the bijections $\phi_{\tau}$ and showed that they uniquely specify the triangulation $\tau$. It would be of interest to do a similar study of the bijections $\Theta_{G,\preceq}$.

\section*{Acknowledgements}

We thank Sylvie Corteel for suggesting to the first author combinatorializing the proof of the product formula in \cite{BV} that inspired this project. The first author is grateful to AIM and the SQuaRE group
``Computing volumes and lattice points of flow polytopes” where some of the topics addressed here were discussed. We also thank William Dugan, Pavel Etingof, Rafael Gonz\'alez D'L\'eon, Tanya Khovanova, Gleb Nenashev, Boya Song, Igor Pak, Alex Postnikov, and Martha Yip for helpful comments and suggestions. This research was made possible by MIT PRIMES-USA 2020 program. Alejandro Morales was partially supported by the NSF Grant DMS-1855536.

\bibliography{references}

\begin{thebibliography}{10}

\bibitem{aomoto}
K.~Aomoto.
\newblock Jacobi polynomials associated with {S}elberg integrals.
\newblock {\em SIAM J. Math. Anal.}, 18(2):545--549, 1987.

\bibitem{BVkpf}
W.~Baldoni and M.~Vergne.
\newblock Kostant partitions functions and flow polytopes.
\newblock {\em Transform. Groups}, 13(3-4):447--469, 2008.

\bibitem{BV}
W.~Baldoni-Silva and M.~Vergne.
\newblock Residues formulae for volumes and {E}hrhart polynomials of convex
  polytopes.
\newblock {\em Progr. Math.}, 220:1--19, 2004.

\bibitem{benedetti2019combinatorial}
C.~Benedetti, R.~Gonz{\'a}lez~D’Le{\'o}n, C.~Hanusa, P.~E. Harris, A.~Khare,
  A.~H. Morales, and M.~Yip.
\newblock A combinatorial model for computing volumes of flow polytopes.
\newblock {\em Trans. Amer. Math. Soc.}, 372(5):3369--3404, 2019.

\bibitem{CRY}
C.~S. Chan, D.~P. Robbins, and D.~S. Yuen.
\newblock On the volume of a certain polytope.
\newblock {\em Exp. Math.}, 9(1):91--99, 2000.

\bibitem{CKM}
S.~Corteel, J.~S. Kim, and K.~M{\'e}sz{\'a}ros.
\newblock Flow polytopes with {C}atalan volumes.
\newblock {\em C. R. Math. Acad. Sci. Paris}, 355(3):248--259, 2017.

\bibitem{DKK}
V.~I. Danilov, A.~V. Karzanov, and G.~A. Koshevoy.
\newblock Coherent fans in the space of flows in framed graphs.
\newblock In {\em 24th {I}nternational {C}onference on {F}ormal {P}ower
  {S}eries and {A}lgebraic {C}ombinatorics ({FPSAC} 2012)}, Discrete Math.
  Theor. Comput. Sci. Proc., AR, pages 481--490. Assoc. Discrete Math. Theor.
  Comput. Sci., Nancy, 2012.

\bibitem{triangulations}
J.~De~Loera, J.~Rambau, and F.~Santos.
\newblock {\em Triangulations: structures for algorithms and applications},
  volume~25.
\newblock Springer Science \& Business Media, 2010.

\bibitem{escobar2018subword}
L.~Escobar and K.~M{\'e}sz{\'a}ros.
\newblock Subword complexes via triangulations of root polytopes.
\newblock {\em Algebr. Comb.}, 1(3):395--414, 2018.

\bibitem{Forrester}
P.~Forrester and S.~Warnaar.
\newblock The importance of the selberg integral.
\newblock {\em Bull. Aust. Math. Soc}, 45(4):489--534, 2008.

\bibitem{GNP}
P.~Galashin, G.~Nenashev, and A.~Postnikov.
\newblock Trianguloids and triangulations of root polytopes.
\newblock arXiv preprint
  \href{https://arxiv.org/abs/1803.06239}{arXiv:1803.06239}, 2018.

\bibitem{haupt}
A.~Haupt.
\newblock Combinatorial proof of {S}elberg's integral formula.
\newblock arXiv preprint
  \href{https://arxiv.org/abs/2005.08731}{arXiv:2005.08731}, 2020.

\bibitem{hille}
L.~Hille.
\newblock Quivers, cones and polytopes.
\newblock {\em Linear Algebra Appl.}, 365:215--237, 2003.

\bibitem{Hum}
J.~Humphreys.
\newblock {\em {I}ntroduction to {L}ie algebras and representation theory},
  volume~9.
\newblock Springer, 2012.

\bibitem{jang2019volumes}
Jihyeug Jang and Jang~Soo Kim.
\newblock Volumes of flow polytopes related to caracol graphs.
\newblock {\em Electronic J. Combin.}, 27(4):P4.21, 2020.

\bibitem{kapoor2019counting}
K.~Kapoor, K.~M{\'e}sz{\'a}ros, and L.~Setiabrata.
\newblock Counting integer points of flow polytopes.
\newblock arXiv preprint
  \href{https://arxiv.org/pdf/1906.05592.pdf}{arXiv:1906.05592}, 2019.

\bibitem{flowsGT}
R.~I. Liu, K.~M\'{e}sz\'{a}ros, and A.~St.~Dizier.
\newblock Gelfand-{T}setlin polytopes: a story of flow and order polytopes.
\newblock {\em SIAM J. Discrete Math.}, 33(4):2394--2415, 2019.

\bibitem{Mproduct}
K.~M{\'e}sz{\'a}ros.
\newblock Product formulas for volumes of flow polytopes.
\newblock {\em Proc. Amer. Math. Soc.}, 143(3):937--954, 2015.

\bibitem{permutahedra}
K.~M{\'e}sz{\'a}ros and A.~St. Dizier.
\newblock {F}rom generalized permutahedra to {G}rothendieck polynomials via
  flow polytopes.
\newblock {\em Algebr. Comb.}, 3(5):1197--1229, 2020.

\bibitem{MM}
K.~M{\'e}sz{\'a}ros and A.~H. Morales.
\newblock Flow polytopes of signed graphs and the {K}ostant partition function.
\newblock {\em Int. Math. Res. Not. IMRN}, 2015(3):830--871, 2015.

\bibitem{tesler}
K.~M{\'e}sz{\'a}ros, A.~H. Morales, and B.~Rhoades.
\newblock The polytope of {T}esler matrices.
\newblock {\em Selecta Math. (N.S.)}, 23(1):425--454, 2017.

\bibitem{MMS}
K.~M\'{e}sz\'{a}ros, A.~H. Morales, and J.~Striker.
\newblock On flow polytopes, order polytopes, and certain faces of the
  alternating sign matrix polytope.
\newblock {\em Discrete Comput. Geom.}, 62(1):128--163, 2019.

\bibitem{morales2018asymptotics}
A.~H. Morales, I.~Pak, and G.~Panova.
\newblock Asymptotics of principal evaluations of {S}chubert polynomials for
  layered permutations.
\newblock {\em Proc. Amer. Math. Soc.}, 147(4):1377--1389, 2019.

\bibitem{morales2017hook}
A.~H. Morales, {I.} Pak, and G.~Panova.
\newblock Hook formulas for skew shapes {III.} multivariate and product
  formulas.
\newblock {\em Algebr. Comb.}, 2(5):815--861, 2019.

\bibitem{Morr}
W.~G. Morris.
\newblock {\em Constant term identities for finite and affine root systems:
  conjectures and theorems}.
\newblock PhD thesis, The University of Wisconsin-Madison, 1982.
\newblock Thesis (Ph.D.).

\bibitem{Pos}
A.~Postnikov.
\newblock Permutohedra, associahedra, and beyond.
\newblock {\em International Mathematics Research Notices}, 2009(6):1026--1106,
  2009.

\bibitem{proctor}
R.~A. Proctor.
\newblock New symmetric plane partition identities from invariant theory work
  of {D}e {C}oncini and {P}rocesi.
\newblock {\em European J. Combin.}, 11(3):289--300, 1990.

\bibitem{schrijver2003combinatorial}
A.~Schrijver.
\newblock {\em Combinatorial optimization: polyhedra and efficiency},
  volume~24.
\newblock Springer, 2003.

\bibitem{stanley}
R.~P. Stanley.
\newblock Acyclic flow polytopes and {K}ostant’s partition function.
\newblock Slides in \url{http://math.mit.edu/~rstan/trans.html}, 2000.

\bibitem{Catalanbook}
R.~P. Stanley.
\newblock {\em Catalan numbers}.
\newblock Cambridge University Press, 2015.

\bibitem{vonBell-et-al}
M.~von Bell, R.~Gonz\'alez~D'L\'eon, F.~Mayorga~Cetina, and M.~Yip.
\newblock A unifying framework for the nu-{T}amari lattice and principal order
  ideals in {Y}oung's lattice.
\newblock arXiv preprint
  \href{https://arxiv.org/abs/2101.10425}{arXiv:2101.10425}, 2021.

\bibitem{yip2019fuss}
M.~Yip.
\newblock A {F}uss-{C}atalan variation of the caracol flow polytope.
\newblock arXiv preprint
  \href{https://arxiv.org/pdf/1910.10060.pdf}{arXiv:1910.10060}, 2019.

\bibitem{Zletter}
D.~Zeilberger.
\newblock {E}mail to {R}ichard {S}tanley and {D}avid {R}obbins.
\newblock
  \url{https://sites.math.rutgers.edu/~zeilberg/mamarim/mamarimhtml/ToRobbins}.

\bibitem{Z}
D.~Zeilberger.
\newblock Proof of a conjecture of {C}han, {R}obbins, and {Y}uen.
\newblock {\em Electron. Trans. Numer. Anal}, 9(147-148):1--2, 1999.

\end{thebibliography}
\bibliographystyle{plain}

\section*{Appendix}

In this section, we give some computational proofs for Section $\ref{sec: bg}.$ In multiple of the proofs below, we use the Legendre duplication formula:
\begin{equation} \label{eq:dup}
    \Gamma(x+\frac{1}{2})\Gamma(x) = 2^{1-2x}\sqrt{\pi}\cdot \Gamma(2x).
\end{equation}
We also use the following expression deducible from the Legendre duplication formula. For positive integers $x$ and $k,$
\begin{equation} \label{eq:dup2}
    \Gamma(x+k+1/2)\Gamma(x) = 2^{1-2x}\sqrt{\pi}\cdot \Gamma(2x) \prod_{j=0}^{k-1} (x+j+\frac{1}{2}).
\end{equation}

\begin{proof}[Proof of Corollary~\ref{cor: Mn(a,b,1)}]
First, consider the ratio $M_n(a,b,1)/M_{n-1}(a,b,1).$ By \eqref{eq:dup},

\begin{align*}
    \frac{M_{n}(a,b,1)}{M_{n-1}(a,b,1)} & = \frac{1}{n} \cdot \frac{\Gamma(a+b+n-\frac{5}{2})\Gamma(a+b+n-2)}{\Gamma(a+b-2+\frac{1}{2}n)} \cdot \frac{\Gamma(\frac{1}{2})}{\Gamma(a+\frac{n-1}{2})\Gamma(b+\frac{n-1}{2})\Gamma(\frac{n}{2})} \\
    & = \frac{1}{n} \cdot \frac{2^{6-2(a+b+n)}\Gamma(2(a+b+n)-5)\pi}{\Gamma(a+b-2+\frac{1}{2}n)\Gamma(a+\frac{n-1}{2})\Gamma(b+\frac{n-1}{2})\Gamma(\frac{n}{2})}.
\end{align*}

Substitution with \eqref{eq:dup2} then gives
\begin{align*}
    \frac{M_{n}(a,b,1)}{M_{n-1}(a,b,1)} & = \frac{(2(a+b+n)-6)!}{n!(2a+n-2)!\prod_{j=0}^{b-3} (2a+n+2j)\prod_{j=0}^{b-2} (n+2j+1)} \\ 
    & = \frac{(2(a+b+n)-6)!}{n!!(2a+n-3)!!(2b+n-3)!!(2a+2b+n-6)!!}.
\end{align*}

To cancel the double factorials, we instead consider the ratio $M_{n+1}(1,1,1)/M_{n-1}(1,1,1)$:

\begin{align}
    \frac{M_{n+1}(a,b,1)}{M_{n-1}(a,b,1)} &= \frac{(2(a+b+n)-4)!(2(a+b+n)-6)!}{(n+1)!(2a+n-2)!(2b+n-2)!(2a+2b+n-5)!} \notag \\
    &= \frac{\binom{2a+2b+2n-4}{2a+n-2}}{\binom{2a+2b+2n-4}{n}} C_{n-1}C_{n}\prod_{i=1}^{n-1}\frac{2(a+b-2) + n + i - 1}{n + i - 1}\prod_{i=1}^{n}\frac{2(a+b-2) + n + i}{n + i} \label{eq:ab1 recurrence}.
\end{align}

We establish the base cases $M_0(a,b,1) = 1,$ $M_1(a,b,1) = \binom{a+b-2}{a-1},$ so by telescoping, we see that:
\begin{align*}
    M_{2n}(a,b,1) & = \prod_{i=1}^n \frac{M_{2i}(a,b,1)}{M_{2i-2}(a,b,1)}\\
    M_{2n-1}(a,b,1) & = \binom{a+b-2}{a-1}\prod_{i=1}^{n-1} \frac{M_{2i+1}(a,b,1)}{M_{2i-1}(a,b,1)}.
\end{align*}
Plugging in with \eqref{eq:ab1 recurrence} gives the desired result.
\end{proof}

The remaining proofs in this section follow a similar scheme, using $M_0(a,b,c) = 1.$

\begin{corollary}
For $c$ even, we have that:
\begin{equation*}
    M_n(1,1,c) = \prod_{i=1}^n \binom{(2i-3)\frac{c}{2}}{(i-1)\frac{c}{2}}\binom{(2i-2)\frac{c}{2}}{(i-1)\frac{c}{2}}\binom{i\frac{c}{2}}{(i-1)\frac{c}{2}}^{-1}.
\end{equation*}
\end{corollary}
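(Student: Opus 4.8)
The plan is to derive this formula as a specialization of the general even-$c$ result in Corollary~\ref{cor: Mn(a,b,2k)}, just as the surrounding appendix proofs telescope the ratio $M_n/M_{n-1}$. First I would set $a=b=1$ and $c=2k$ in Corollary~\ref{cor: Mn(a,b,2k)}, which gives
\[
M_n(1,1,2k) = \prod_{i=1}^n \frac{((2i-3)k)!\,k!}{((i-2)k)!\,(ik)!}\binom{(2i-2)k}{(i-1)k}.
\]
Writing $c/2=k$ throughout, the factor $\binom{(2i-2)k}{(i-1)k}$ is exactly the middle binomial $\binom{(2i-2)\frac{c}{2}}{(i-1)\frac{c}{2}}$ in the target. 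So the remaining task is to show that the quotient of factorials $\frac{((2i-3)k)!\,k!}{((i-2)k)!\,(ik)!}$ equals $\binom{(2i-3)\frac{c}{2}}{(i-1)\frac{c}{2}}\binom{i\frac{c}{2}}{(i-1)\frac{c}{2}}^{-1}$, i.e. $\binom{(2i-3)k}{(i-1)k}\big/\binom{ik}{(i-1)k}$.

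Next I would verify this factor-by-factor identity directly by expanding the two binomials. The claim is
\[
\frac{\bigl((2i-3)k\bigr)!\,k!}{\bigl((i-2)k\bigr)!\,(ik)!}
= \frac{\dfrac{\bigl((2i-3)k\bigr)!}{\bigl((i-1)k\bigr)!\,\bigl((i-2)k\bigr)!}}{\dfrac{(ik)!}{\bigl((i-1)k\bigr)!\,k!}}.
\]
On the right-hand side the $\bigl((i-1)k\bigr)!$ in the numerator and denominator of the double fraction cancels, leaving $\frac{\bigl((2i-3)k\bigr)!\,k!}{\bigl((i-2)k\bigr)!\,(ik)!}$, which is precisely the left-hand side. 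Thus the two expressions agree term by term in the product over $i$, and substituting $k=c/2$ back throughout yields the stated formula.

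The only genuine obstacle is careful bookkeeping at the boundary indices: when $i=1$ the arguments $(i-2)k=-k$ and $(2i-3)k=-k$ are negative, so the raw factorial expression in Corollary~\ref{cor: Mn(a,b,2k)} must be interpreted through the analytic ($\Gamma$-function) continuation used there, and one should check that the $i=1$ factor correctly evaluates to $1$ (matching the empty-product normalization $M_0=1$ that anchors the telescoping arguments in this appendix). Once the $i=1$ term is confirmed to contribute trivially and the binomial rewriting is applied for $i\ge 2$, the result follows immediately by substitution, with no further computation required.
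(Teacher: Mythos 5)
Your proof is correct, but it follows a genuinely different route from the paper's. The paper proves this corollary directly from the Morris product formula \eqref{eq:morris}: it computes the ratio $M_n(1,1,c)/M_{n-1}(1,1,c)$ as a quotient of Gamma factors, rearranges that single factorial ratio into the three binomial coefficients $\binom{(2n-3)\frac{c}{2}}{(n-1)\frac{c}{2}}\binom{(2n-2)\frac{c}{2}}{(n-1)\frac{c}{2}}\binom{n\frac{c}{2}}{(n-1)\frac{c}{2}}^{-1}$, and then telescopes against $M_0(1,1,c)=1$. You instead take Corollary~\ref{cor: Mn(a,b,2k)} as a black box, specialize $a=b=1$, $k=c/2$, and verify the factor-by-factor binomial identity
\[
\frac{\bigl((2i-3)k\bigr)!\,k!}{\bigl((i-2)k\bigr)!\,(ik)!}=\binom{(2i-3)k}{(i-1)k}\binom{ik}{(i-1)k}^{-1},
\]
which is exactly the cancellation you describe. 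This is not circular: the appendix proof of Corollary~\ref{cor: Mn(a,b,2k)} is an independent ratio-telescoping computation from \eqref{eq:morris} that never invokes the $(1,1,c)$ case, so the logical order is sound even though the paper happens to present this corollary first. Your attention to the $i=1$ boundary is also warranted and correctly resolved: both the specialized general formula and the target product have factors involving $(-k)!$, respectively $\binom{-\frac{c}{2}}{0}$, at $i=1$, and under the Gamma-limit interpretation (or the convention $\binom{x}{0}=1$) both evaluate to $1$, matching $M_1(1,1,c)=1$. The trade-off between the two arguments: yours is shorter and purely mechanical once the general even-$c$ formula is available, while the paper's is self-contained given only the Morris identity and does not lean on the general $(a,b)$ result; note also that in the paper's organization the direct telescoping work you are implicitly reusing is simply relocated into the proof of Corollary~\ref{cor: Mn(a,b,2k)} rather than avoided.
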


\begin{proof}
Again, consider $M_n(1,1,c)/M_{n-1}(1,1,c).$ We see that
\begin{align*}
    \frac{M_{n}(1,1,c)}{M_{n-1}(1,1,c)} &= \frac{1}{n}\cdot \frac{((2n-3)\frac{c}{2})!((2n-2)\frac{c}{2})!}{((n-2)\frac{c}{2})!} \cdot \frac{(\frac{c}{2}-1)!}{((n-1)\frac{c}{2})!^2(\frac{c}{2}n-1)!} \\
    &=\frac{((2n-3)\frac{c}{2})!((2n-2)\frac{c}{2})!}{((n-2)\frac{c}{2})!} \cdot \frac{(\frac{c}{2})!}{((n-1)\frac{c}{2})!^2(\frac{c}{2}n)!}.
\end{align*}

With some rearrangement, we get the equation
$$\frac{M_n(1,1,c)}{M_{n-1}(1,1,c)} = \binom{n\frac{c}{2}}{(n-1)\frac{c}{2}}^{-1}\binom{(2n-3)\frac{c}{2}}{(n-1)\frac{c}{2}}\binom{(2n-2)\frac{c}{2}}{(n-1)\frac{c}{2}}.$$

Since $M_n(1,1,c) = \prod_{i=1}^n M_i(1,1,c)/M_{i-1}(1,1,c),$ the result follows.
\end{proof}

\begin{corollary}
For $c$ odd, we have that:
\begin{equation*}
    M_n(1,1,c) = \prod_{i=1}^n\frac{(1+(2i-3)c)!((i-1)c)!c!!}{((i-2)c)!!((i-1)c)!!^2(ic)!!((2i-3)\frac{c}{2}+\frac{1}{2})!}.
\end{equation*}
\end{corollary}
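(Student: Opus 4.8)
The plan is to follow the same telescoping scheme used in the two preceding corollaries: start from the base case $M_0(1,1,c)=1$ and compute the single ratio $M_n(1,1,c)/M_{n-1}(1,1,c)$ from the alternate $\tfrac{1}{n!}$ form of the Morris identity specialized at $a=b=1$. Collecting the Gamma factors that survive in passing from $n-1$ to $n$ produces exactly the ratio already isolated in the even case, valid for every $c$:
\begin{equation*}
\frac{M_n(1,1,c)}{M_{n-1}(1,1,c)} = \frac{1}{n}\cdot \frac{\Gamma\!\left((2n-3)\frac c2+1\right)\,\Gamma\!\left((n-1)c+1\right)\,\Gamma\!\left(\frac c2\right)}{\Gamma\!\left((n-2)\frac c2+1\right)\,\Gamma\!\left((n-1)\frac c2+1\right)^2\,\Gamma\!\left(n\frac c2\right)}.
\end{equation*}
For even $c$ this simplified immediately to factorials; the content of the odd case is that $\tfrac{kc}{2}$ is a half-integer precisely when $k$ is odd, so the arguments above alternate between integer and half-integer type according to the parity of $n$.

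Next I would convert every Gamma in this ratio into double factorials by applying the Legendre duplication formula \eqref{eq:dup} (equivalently \eqref{eq:dup2}), in the guise $\Gamma(m+\tfrac12)=\tfrac{(2m-1)!!}{2^m}\sqrt{\pi}$ together with $\Gamma(m+1)=m!$. The half-integer factor $\Gamma\!\left((2n-3)\frac c2+1\right)$ is the source of both factorials in the target: writing $q=(2n-3)\frac c2+\frac12\in\mathbb Z$, duplication gives
\begin{equation*}
\Gamma\!\left((2n-3)\tfrac c2+1\right) = \frac{(1+(2n-3)c)!}{2^{\,1+(2n-3)c}\,\big((2n-3)\frac c2+\frac12\big)!}\,\sqrt{\pi}.
\end{equation*}
The integer-argument factor $\Gamma((n-1)c+1)=((n-1)c)!$ passes through unchanged, while the denominator factors $\Gamma\!\left((n-2)\frac c2+1\right)$, $\Gamma\!\left((n-1)\frac c2+1\right)^2$, $\Gamma\!\left(n\frac c2\right)$, together with the numerator factor $\Gamma(\tfrac c2)$, assemble (after the same conversions) into the double factorials $((n-2)c)!!$, $((n-1)c)!!^{\,2}$, $(nc)!!$, and $c!!$.

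A short case split on the parity of $n$ is then required, since the set of half-integer arguments differs between the two parities. In each case I would verify that the stray factors of $\sqrt{\pi}$ and the accumulated powers of $2$ cancel within the single ratio: a direct count shows two half-integer arguments in the numerator and two in the denominator for both parities of $n$, so every $\sqrt{\pi}$ cancels and $M_n(1,1,c)/M_{n-1}(1,1,c)$ reduces to the rational per-$i$ factor
\begin{equation*}
\frac{(1+(2n-3)c)!\,((n-1)c)!\,c!!}{((n-2)c)!!\,((n-1)c)!!^{\,2}\,(nc)!!\,\big((2n-3)\frac c2+\frac12\big)!}.
\end{equation*}
Telescoping $M_n(1,1,c)=\prod_{i=1}^n M_i(1,1,c)/M_{i-1}(1,1,c)$ from $M_0(1,1,c)=1$ then yields the claimed product, where the expressions are read through the standard reciprocal-Gamma conventions so that the poles in the small-$i$ factors (e.g. the $i=1$ term) cancel to give finite values.

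The hard part will be the bookkeeping of which Gamma arguments are half-integers as a function of the parity of $n$, and confirming that every application of the duplication formula contributes compensating factors of $\sqrt{\pi}$ and powers of $2$ that cancel to leave the uniform double-factorial expression above. Once this cancellation is organized into the two parity cases, the remaining telescoping is routine and parallels the even-$c$ corollary verbatim.
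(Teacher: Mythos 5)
Your proposal matches the paper's proof essentially step for step: the paper likewise computes the single ratio $M_n(1,1,c)/M_{n-1}(1,1,c)$ from the $\tfrac{1}{n!}$ form of the Morris identity, applies the Legendre duplication formula \eqref{eq:dup}/\eqref{eq:dup2} to the half-integer Gamma factors (producing the same power $2^{c(3-2n)-1}$ you found), collapses everything into double factorials, and telescopes from $M_0(1,1,c)=1$. The only cosmetic difference is that the paper sidesteps your parity-of-$n$ case split by expressing all converted factors as double factorials, which absorb both parities uniformly, so the two $\sqrt{\pi}$'s and powers of $2$ cancel without separate cases.
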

\begin{proof}
Let $k = \lfloor \frac{c}{2} \rfloor$ or $\frac{c}{2} = k + \frac{1}{2}$. We then simplify the ratio $M_n(1,1,c)/M_{n-1}(1,1,c)$ using \eqref{eq:dup2}.

\begin{align*}
    \frac{M_n(1,1,c)}{M_{n-1}(1,1,c)}& = \frac{1}{n}\cdot \frac{\Gamma(1 + (2n-3)\frac{c}{2})\Gamma(1 + (2n-2)\frac{c}{2})}{\Gamma(1 + (n-2)\frac{c}{2})} \cdot \frac{\Gamma(\frac{c}{2})}{\Gamma(1+(n-1)\frac{c}{2})^2\Gamma(\frac{c}{2}n)}\\
& = \frac{1}{n} \cdot\frac{2^{c(3-2n)-1}\Gamma(2+(2n-3)c) \prod_{j=0}^{k-1} (1+(2n-3)\frac{c}{2}+j+\frac{1}{2})}{2^{c(2-n)-1}\Gamma(2+(n-2)c) \prod_{j=0}^{k-1} (1+(n-2)\frac{c}{2}+j+\frac{1}{2})} \cdot \frac{\Gamma(\frac{c}{2})}{\Gamma(1+(n-1)\frac{c}{2})\Gamma(\frac{c}{2}n)} \\
& = 2^{1-c}\cdot \frac{(1+(2n-3)c)!c!!}{(1+(n-2)c)!(1+(n-1)c)!}\prod_{j=0}^{k-1}\frac{1+(2n-3)\frac{c}{2}+j+\frac{1}{2}}{(1+(n-2)\frac{c}{2}+j+\frac{1}{2})(1+(n-1)\frac{c}{2}+j+\frac{1}{2}))} \\
& = 2^{1-c+k} \cdot \frac{((2n-3)c)!!c!!((2n-2)c)!!}{((n-2)c)!!((n-1)c)!!^2(nc)!!} = \frac{(1+(2n-3)c)!((n-1)c)!c!!}{((n-2)c)!!(((n-1)c)!!)^2(nc)!!((2n-3)\frac{c}{2}+\frac{1}{2})!}.
\end{align*}
We have $M_n(1,1,c) = \prod_{i=1}^n M_i(1,1,c)/M_{i-1}(1,1,c),$ and the result follows.
\end{proof}

\begin{proof} [Proof of Corollary~\ref{cor: Mn(a,b,2k)}]
We again compute the ratio $M_n(a,b,2k)/M_{n-1}(a,b,2k)$:
\begin{align*}
    \frac{M_{n}(a,b,2k)}{M_{n-1}(a,b,2k)} &= \frac{1}{n}\cdot \frac{\Gamma(a + b - 1 + (2n-3)k)\Gamma(a + b -1 + (2n-2)k)}{\Gamma(1 + (n-2)k)} \cdot \frac{\Gamma(k)}{\Gamma(a+(n-1)k)\Gamma(b+(n-1)k)\Gamma(kn)} \\
    &= \frac{(a + b - 2 + (2n-3)k)!(a + b -2 + (2n-2)k)!}{((n-2)k)!} \cdot \frac{k!}{((a-1)+(n-1)k)!((b-1)+(n-1)k)!(kn)!} \\
    &= \frac{(a+b-2+(2n-3)k)!k!}{((n-2)k)!(nk)!} \cdot \binom{a + b-2+(2n-2)k}{a-1 + (n-1)k}.
\end{align*}
As with the above proofs, $M_n(a,b,2k) = \prod_{i=1}^n M_i(a,b,2k)/M_{i-1}(a,b,2k),$ so the result follows.
\end{proof}

\subsection{Asymptotic Analysis} label{sec:asymptotics} In this subsection, we examine the asymptotics of the Morris identity product formula. We use the standard asymptotics notations $f \sim g$ and $f = O(g).$

\begin{lemma} \label{lemma: cat asymptotics} We have the following asymptotic formula:
$$\log M_n(1,1,1)  = n^2 \log 2 - \frac{3}{2}n\log n + O(n).$$
\end{lemma}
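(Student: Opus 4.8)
The plan is to start from the product formula of Theorem~\ref{thm:CRY}, namely $M_n(1,1,1)=\prod_{i=1}^{n-1}C_i$, which immediately converts the problem into estimating a sum of logarithms:
\[
\log M_n(1,1,1)=\sum_{i=1}^{n-1}\log C_i.
\]
First I would record the standard Stirling estimate $\binom{2i}{i}=\frac{4^i}{\sqrt{\pi i}}\bigl(1+O(1/i)\bigr)$, and divide by $i+1$ to obtain $C_i=\frac{4^i}{\sqrt{\pi}\,i^{3/2}}\bigl(1+O(1/i)\bigr)$, so that termwise
\[
\log C_i = i\log 4 - \tfrac{3}{2}\log i - \tfrac{1}{2}\log\pi + O(1/i).
\]

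Next I would sum these three explicit contributions over $i=1,\ldots,n-1$. The quadratic main term comes from the linear-in-$i$ piece: since $\log 4 = 2\log 2$, we get $\sum_{i=1}^{n-1} i\log 4 = \log 4\cdot\binom{n}{2} = \log 2\cdot n(n-1) = n^2\log 2 + O(n)$. The $n\log n$ term comes from the logarithmic piece, using $\sum_{i=1}^{n-1}\log i = \log\bigl((n-1)!\bigr)$ and applying Stirling a second time in the form $\log\bigl((n-1)!\bigr) = n\log n - n + O(\log n)$; hence $-\tfrac{3}{2}\sum_{i=1}^{n-1}\log i = -\tfrac{3}{2}n\log n + O(n)$. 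The two remaining pieces, the constant term $-\tfrac{1}{2}(n-1)\log\pi$ and the accumulated error $\sum_{i=1}^{n-1}O(1/i)=O(\log n)$, are each $O(n)$. Collecting everything gives exactly $\log M_n(1,1,1)=n^2\log 2 - \tfrac{3}{2}n\log n + O(n)$, as claimed. (Equivalently, one may rewrite $\prod_{i=1}^{n-1}C_i=\prod_{i=1}^{n-1}\frac{(2i)!}{i!\,(i+1)!}$ in terms of superfactorials and invoke Barnes $G$-function asymptotics, matching the superfactorial framing mentioned earlier; the direct Catalan route above is shorter since it only needs Stirling to leading order.)

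The computation is entirely routine, so I do not expect a genuine obstacle. The one point requiring care is bookkeeping of subleading terms: one must confirm that the Stirling expansion of $\log\bigl((n-1)!\bigr)$ contributes precisely the coefficient $-\tfrac32$ to the $n\log n$ term and that every linear, logarithmic, and constant remainder is faithfully absorbed into $O(n)$ without perturbing the two displayed leading coefficients. In particular it is worth noting that the factor $1/(i+1)$ in $C_i$ shifts the exponent of $i$ from $-1/2$ (for $\binom{2i}{i}$) to $-3/2$, which is exactly what produces the $-\tfrac{3}{2}n\log n$ term.
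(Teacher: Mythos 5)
Your proposal is correct and follows essentially the same route as the paper's own proof: both start from $M_n(1,1,1)=\prod_{i=1}^{n-1}C_i$, apply the Catalan asymptotic $C_i\sim 4^i/(i^{3/2}\sqrt{\pi})$ termwise, reduce the logarithmic sum to $\binom{n}{2}\log 4-\tfrac{3}{2}\log((n-1)!)+O(n)$, and finish with Stirling's formula. Your version is slightly more careful with the error bookkeeping (tracking the $O(1/i)$ terms and the $\log\pi$ constant explicitly), but the argument is the same.
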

\begin{proof}
It is well known that $C_n \sim 4^n/(n^{3/2} \sqrt \pi).$ Then
$$\log M_n(1,1,1) = \sum_{i=1}^{n-1} \log C_i \sim \sum_{i=1}^{n-1} i \log 4 - \frac{3}{2} \log i - \frac{1}{2} = \binom{n}{2} \log 4 - \frac{n-1}{2} - \frac{3}{2} \log ( (n-1)!)$$
By Stirling's formula and some manipulation, the result follows.
\end{proof}

\begin{proposition}\label{asymptotic Mna11} For positive integer $n$, 
$\log M_n(n,1,1) = (9 \log 2 - \frac{9}{2} \log 3)n^2 + O(n\log n).$
\end{proposition}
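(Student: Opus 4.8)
The plan is to start from the closed product formula for $M_n(a,1,1)$ and specialize $a=n$. Combining the Corollary expressing $M_n(a,1,1)$ as consecutive Catalan numbers times a Catalan determinant with Proctor's formula, one has
\[
M_n(n,1,1) \;=\; \Big(\prod_{i=1}^{n-1} C_i\Big)\prod_{1\le i<j\le n}\frac{2(n-1)+i+j-1}{i+j-1}.
\]
Taking logarithms splits the problem cleanly into two pieces,
\[
\log M_n(n,1,1) \;=\; \log M_n(1,1,1) \;+\; \sum_{1\le i<j\le n}\log\frac{i+j+2n-3}{i+j-1}.
\]
For the first piece I would invoke Lemma~\ref{lemma: cat asymptotics}, which gives $\log M_n(1,1,1)=n^2\log 2+O(n\log n)$, so it contributes exactly $n^2\log 2$ to the leading coefficient. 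The entire task then reduces to extracting the $n^2$ coefficient of the double sum $T_n:=\sum_{1\le i<j\le n}\log\frac{i+j+2n-3}{i+j-1}$.

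Next I would treat $T_n$ as a Riemann sum. Writing $i=xn$ and $j=yn$, the ratio $\frac{i+j+2n-3}{i+j-1}$ tends to $\frac{x+y+2}{x+y}$, and since there are $\binom n2\sim\tfrac{n^2}{2}$ terms one expects
\[
T_n \;=\; \frac{n^2}{2}\iint_{[0,1]^2}\log\frac{x+y+2}{x+y}\,dx\,dy \;+\; O(n\log n),
\]
using the symmetry of the integrand to pass from the region $0<x<y<1$ to the full square (the diagonal $i=j$ contributes only $O(n\log n)$). To evaluate $J:=\iint_{[0,1]^2}\log\frac{x+y+2}{x+y}\,dx\,dy$ I would use the antiderivative $F(t)=\tfrac{t^2}{2}\log t-\tfrac{t^2}{4}$ together with the one-dimensional reduction $\iint_{[0,1]^2}\log(x+y+c)\,dx\,dy=F(c+2)-2F(c+1)+F(c)-1$, applied at $c=2$ and at $c=0$ (where $F(0)=0$). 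This yields $\iint\log(x+y+2)=18\log 2-9\log 3-\tfrac32$ and $\iint\log(x+y)=2\log 2-\tfrac32$, hence $J=16\log 2-9\log 3$. Combining, the leading coefficient equals $\log 2+\tfrac12 J = 9\log 2-\tfrac92\log 3$, as claimed.

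The main obstacle is justifying the error term $O(n\log n)$ in the Riemann-sum step, specifically controlling the integrable logarithmic singularity of $\log\frac{x+y+2}{x+y}$ at the origin, where the factors $i+j-1$ are smallest. The largest summand, at $(i,j)=(1,2)$, is only $\log\frac{2n}{2}=O(\log n)$, and the pairs with $i+j-1\le \sqrt n$ number $O(n)$, so their total contribution is $O(n\log n)$; symmetrically, the integral over the small triangle $\{x+y\le n^{-1/2}\}$ scales as $O(n^{-1}\log n)$, contributing $O(n\log n)$ after multiplication by $n^2$. Away from the origin the integrand is smooth and bounded, so the standard Riemann-sum discrepancy is $O(n)$, and the per-term difference between $\log\frac{i+j+2n-3}{i+j-1}$ and $\log\frac{x+y+2}{x+y}$ is $O(1/n)$ in the bulk, again absorbed into $O(n\log n)$. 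Since every error contribution is of lower order than $n^2$, only the integral $J$ and the Catalan term survive in the leading coefficient, giving the stated asymptotic.
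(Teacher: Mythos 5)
Your proposal is correct, and its second half takes a genuinely different route from the paper's. Both proofs begin identically: split $\log M_n(n,1,1)=\log M_n(1,1,1)+T_n$ with $T_n=\sum_{1\le i<j\le n}\log\tfrac{2(n-1)+i+j-1}{i+j-1}$, and handle the Catalan factor by Lemma~\ref{lemma: cat asymptotics}. The divergence is in estimating $T_n$. The paper converts the product into superfactorials via the identity $\prod_{1\le i<j\le n}\tfrac{2(n-1)+i+j-1}{i+j-1}=\gimel(4n-2)\gimel(2n-1)\Lambda(n)/\bigl(\Lambda(3n-2)\gimel(2n)\bigr)$, where $\Lambda(n)=1!\,2!\cdots(n-1)!$ and $\gimel(n)=(n-2)!(n-4)!\cdots$, and then quotes the asymptotics $\log\Lambda(n)=\tfrac12 n^2\log n-\tfrac34 n^2+O(n\log n)$ and $\log\gimel(n)=\tfrac14 n^2\log n-\tfrac38 n^2+O(n\log n)$ from cited prior work; the $n^2\log n$ terms cancel, leaving $(8\log 2-\tfrac92\log 3)n^2$. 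You instead evaluate $T_n$ directly as a Riemann sum, obtaining $\tfrac{n^2}{2}(16\log 2-9\log 3)$ --- the same quantity. The paper's route is shorter and purely mechanical once the superfactorial asymptotics are accepted, and it sidesteps any singularity issues because the closed form is exact. Your route is self-contained (no superfactorial identity or external asymptotics needed) and more flexible: the same integral computation handles $M_n(\lfloor\alpha n\rfloor,1,1)$ for fixed $\alpha>0$ with integrand $\log\tfrac{x+y+2\alpha}{x+y}$. The cost is the singularity analysis, which you carry out correctly: the cutoff $i+j-1\le\sqrt n$ isolates $O(n)$ terms each of size $O(\log n)$, the matching integral patch contributes $O(n^{-1}\log n)$ before rescaling, and your evaluation of $J$ via the second difference of $F(t)=\tfrac{t^2}{2}\log t-\tfrac{t^2}{4}$ checks out, giving $J=16\log 2-9\log 3$ and leading coefficient $\log 2+\tfrac12 J=9\log 2-\tfrac92\log 3$. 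One small imprecision worth fixing: in the bulk the per-term discrepancy is $O\bigl(1/(i+j)\bigr)$, not uniformly $O(1/n)$ (near the cutoff it can be as large as $n^{-1/2}$); the conclusion survives because $\sum_{1\le i<j\le n}1/(i+j)=O(n)$, but the justification should be stated in that summed form.
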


\begin{proof}
By \cite{morales2018asymptotics}, we have an alternative formulation of Proctor's formula
$$\prod_{1 \le i < j \le n} \frac{2(n-1)+i+j-1}{i+j-1} = \frac{\gimel(2(n-1)+2n)\gimel(2(n-1)+1)\Lambda(n)}{\Lambda(2(n-1)+n)\gimel(2n)}$$
where $\Lambda(n) = 1! \cdot 2! \cdots (n-1)!$ and $\gimel(n) = (n-2)!(n-4)!\cdots$. Then
\begin{align*}
    \log \prod_{1 \le i < j \le n} \frac{2(n-1)+i+j-1}{i+j-1} & = \log \gimel(4n-2) + \log \gimel(2n-1)+ \log \Lambda(n) - \log \Lambda(3n-2) - \log \gimel(2n) \\
     & = \log \gimel(4n) + \log \Lambda(n) - \log \Lambda(3n) +O(n \log n).
\end{align*}
In \cite{morales2017hook}, we have the asymptotics
$$\log \Lambda(n) = \frac{1}{2} n^2 \log n - \frac{3}{4}n^2 + O(n \log n)$$
$$\log \gimel(n) = \frac{1}{4} n^2 \log n - \frac{3}{8} n^2 + O(n\log n).$$
 Then substituting with some manipulation gives that the above sum is equal to
$$(8 \log 2 - \frac{9}{2} \log 3)n^2 + O(n\log n).$$ 

The result follows by applying Lemma~\ref{lemma: cat asymptotics}.
\end{proof}

\begin{proposition}
For positive integer $n$ and fixed positive integer $a$ and $b,$ we have that
$$\log M_{n}(n,n,1) = 2n^2\log n + (3+ 13 \log 2 + \frac{9}{2} \log 3 - \frac{25}{4} \log 5)n^2 + O(n\log n).$$
\end{proposition}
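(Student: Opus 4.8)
The plan is to specialize the closed formula for $M_n(a,b,1)$ from Corollary~\ref{cor: Mn(a,b,1)} to $a=b=n$ and extract the asymptotics term by term, exactly as in the proofs of Lemma~\ref{lemma: cat asymptotics} and Proposition~\ref{asymptotic Mna11}. Since the even- and odd-index formulas differ only by the prefactor $\binom{2n-2}{n-1}$ together with a shift in the product ranges, and since both of these alter $\log M_n(n,n,1)$ only at order $O(n\log n)$, it suffices to treat one parity, say $n$ even with $n=2m$. Setting $a=b=n$ in the even formula then splits $\log M_n(n,n,1)$ into three pieces: the consecutive-Catalan product $\log(C_1\cdots C_{n-1})$, the Proctor-type double product $\sum_{1\le i<j\le n}\log\frac{4n-4+i+j-1}{i+j-1}$ (here $2(a+b-2)=4n-4$), and the binomial product $\sum_{i=1}^{m}\bigl[\log\binom{4n+4i-2}{2n+2i-1}-\log\binom{4n+4i-2}{2i+1}\bigr]$.

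First I would dispatch the Catalan piece directly by Lemma~\ref{lemma: cat asymptotics}, which gives its contribution as $n^2\log 2+O(n\log n)$. For the Proctor piece I would invoke the superfactorial reformulation of Proctor's formula from \cite{morales2018asymptotics} already used in Proposition~\ref{asymptotic Mna11}, now with the parameter $4n-4$ in place of $2(n-1)$, rewriting the double product as
\[
\frac{\gimel(6n-4)\,\gimel(4n-3)\,\Lambda(n)}{\Lambda(5n-4)\,\gimel(2n)}.
\]
Applying the expansions $\log\Lambda(p)=\tfrac12 p^2\log p-\tfrac34 p^2+O(p\log p)$ and $\log\gimel(p)=\tfrac14 p^2\log p-\tfrac38 p^2+O(p\log p)$ from \cite{morales2017hook} to each factor then reduces this piece to a finite linear combination of $\log 2,\log 3,\log 5$ times $n^2$, once the leading logarithmic terms among the five superfactorials are combined.

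The main obstacle is the binomial piece. Its top factor is a central binomial coefficient, so $\log\binom{4n+4i-2}{2n+2i-1}=(4n+4i-2)\log 2+O(\log n)$ and this subsum is elementary. The bottom factor $\binom{4n+4i-2}{2i+1}$ is genuinely skew: as $i$ runs over $[1,m]$ its lower index sweeps from $O(1)$ up to $\Theta(n)$, so the naive entropy estimate is not uniform. Here I would apply Stirling to write the summand as $N_i\log N_i-k_i\log k_i-(N_i-k_i)\log(N_i-k_i)+O(\log n)$ with $N_i=4n+4i-2$ and $k_i=2i+1$, and evaluate the three resulting sums by Euler--Maclaurin/integral comparison, the $\sum k_i\log k_i$ sum being the delicate one owing to the small-$i$ regime. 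The crux of the whole argument is that the individual superfactorial and binomial terms each separately carry $n^2\log n$ contributions, so the hard part is tracking these leading logarithmic coefficients exactly and controlling the error in the skew-binomial sum uniformly; assembling the $n^2\log n$ and $n^2$ coefficients from all three pieces and simplifying $\log 4,\log 6$ into $\log 2,\log 3$ is then designed to produce the stated expansion.
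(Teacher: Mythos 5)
Your plan has a genuine gap, and it sits exactly at the step you defer to the end (``assembling the $n^2\log n$ and $n^2$ coefficients \dots is then designed to produce the stated expansion''). Carried out correctly, your decomposition cannot produce a $2n^2\log n$ term at all, because each of your three pieces has a vanishing $n^2\log n$ coefficient. The Catalan piece is $n^2\log 2+O(n\log n)$ by Lemma~\ref{lemma: cat asymptotics}. In the Proctor piece, the expansions $\log\Lambda(p)=\tfrac12 p^2\log p+O(p^2)$ and $\log\gimel(p)=\tfrac14 p^2\log p+O(p^2)$ give for $\log\bigl(\gimel(6n)\gimel(4n)\Lambda(n)/(\Lambda(5n)\gimel(2n))\bigr)$ the leading coefficient $9+4+\tfrac12-\tfrac{25}{2}-1=0$. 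And your binomial piece is $O(n^2)$ outright: each factor satisfies $\bigl|\log\binom{4n+4i-2}{2n+2i-1}-\log\binom{4n+4i-2}{2i+1}\bigr|\le (4n+4i-2)\log 2=O(n)$ uniformly in $i\le n/2$, and there are only $n/2$ factors, so no Euler--Maclaurin refinement can manufacture a $\log n$ that is not there. Hence your method, executed faithfully, yields $\log M_n(n,n,1)=Cn^2+O(n\log n)$ for an explicit constant $C$, which is irreconcilable with the claimed $2n^2\log n+(\cdots)n^2$.

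The paper's own proof obtains its $2n^2\log n$ from the one step your plan replaces: it rewrites (``with some manipulation'') the binomial product of Corollary~\ref{cor: Mn(a,b,1)} as the superfactorial ratio $\gimel(n)\gimel(5n-4)/\gimel(3n-2)^2$, whose logarithm is $2n^2\log n+O(n^2)$, and that ratio is the sole source of the leading term in the stated formula. Your entropy bound shows the same binomial product has logarithm $O(n^2)$, so the two computations flatly contradict each other; the discrepancy arises because the paper's manipulation in effect drops factors of the shape $\gimel(2a)\gimel(2b)/\gimel(2a+2b-2)$, which are harmless for fixed $a,b$ but of size $\exp(-2n^2\log n+O(n^2))$ when $a=b=n$. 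The numerics side with your route: $M_2(2,2,1)=10$ while Corollary~\ref{cor: Mn(a,b,1)} as printed gives $6.3$ (so that formula already fails), $M_3(3,3,1)=9240$, and $\log M_{10}(10,10,1)\approx 189$; these are consistent with $Cn^2+O(n\log n)$ for $C=11\log 2+\tfrac92\log 3-\tfrac{25}{4}\log 5\approx 2.51$, a value obtainable directly by Stirling summation from Theorem~\ref{morris}, and wildly inconsistent with a $2n^2\log n$ term, which alone would exceed $460$ at $n=10$. So the gap is not patchable within your outline: the Proposition itself (and the paper's derivation of it) is in error, and your methodologically sound plan would end by disproving the stated expansion rather than proving it.
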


\begin{proof}
With some manipulation, we can show that
$$M_{n}(a,b,1) \le \binom{a+b-2}{a-1} \frac{\gimel(n)\gimel(2a+2b+n-4)}{\gimel(2a+n-2)\gimel(2b+n-2)} M_n(1,1,1)\prod_{1 \leq i < j \leq n-1}\frac{2(a+b-2) + j + i - 1}{j + i - 1},$$
where equality holds when $n$ is odd. The lower bound occurs by removing the $\binom{a+b-2}{a-1}$ term. Thus
\begin{align*}
    \log M_{n}(n,n,1) & = \log \frac{\gimel(n)\gimel(5n-4)}{\gimel(3n-2)\gimel(3n-2)} + \log M_{n}(2n-1,1,1) + O(n\log n) \\
    & = \log \frac{\gimel(n)\gimel(5n)}{\gimel(3n)^2} + \log M_{n}(2n-1,1,1) + O(n\log n).
\end{align*}
The first term on the right-hand side is 
$$\log \frac{\gimel(n)\gimel(5n)}{\gimel(3n)^2} = 2n^2 \log n + (\frac{25}{4} \log 5 - \frac{9}{2} \log 3 +3)n^2 + O(n\log n).$$
The second term is
\begin{align*}
    \log \frac{\gimel(6n)\gimel(4n)\Lambda(n)}{\Lambda(5n)\gimel(2n)} + \log M_n(1,1,1) + O(n\log n) & = (13 \log 2 + 9 \log 3 - \frac{25}{2} \log 5)n^2 + O(n\log n).
\end{align*}
The result follows from summing the two terms.
\end{proof}

\end{document}